\documentclass[a4paper,12pt]{amsart}
\usepackage[T1]{fontenc}
\usepackage[centering,margin=2.3cm]{geometry}
\usepackage[latin1]{inputenc}
\usepackage{amsthm, amsmath}   
\usepackage{latexsym,amssymb}
\usepackage{algorithmic}
\usepackage{amssymb}
\usepackage{amsaddr}
\usepackage{graphicx,subfigure}
\usepackage{times}
\usepackage{enumerate}
\usepackage[usenames,dvipsnames]{pstricks}
\usepackage{epsfig}
\usepackage{pst-node}
\usepackage{pst-grad} 
\usepackage{pst-plot} 
\usepackage{pst-3dplot}
\usepackage{color}
\usepackage{cite}
\hyphenation{res-pec-tively}

\newcommand{\upa}{\ensuremath{\uparrow\hspace{-1.5pt}}}
\newcommand{\downa}{\ensuremath{\downarrow\hspace{-1.5pt}}}

\newcommand{\N}{\mathbb{N}}

\newcommand{\Z}{\mathbb{Z}}

\newcommand{\End}{\mathrm{End}}
\newcommand{\Min}{\mathrm{Min}}

\newcommand{\lS}{\leq_{M}}

\theoremstyle{plain}
\newtheorem{theorem}{Theorem}[section]
\newtheorem{lemma}[theorem]{Lemma}
\newtheorem{corollary}[theorem]{Corollary}
\newtheorem{proposition}[theorem]{Proposition}

\newtheorem{observation}[theorem]{Observation}

\theoremstyle{definition}

\newtheorem{conjecture}[theorem]{Conjecture}

\newtheorem{question}[theorem]{Question}

\newtheorem{case}{Case}
\newtheorem{subcase}{Case}[case]

\title[Cayley posets]{Cayley posets}
\author[I. Garc\'{i}a-Marco]{Ignacio Garc\'{i}a-Marco}
\address{Facultad de Ciencias, Universidad de La Laguna, La Laguna, Spain}
\author[K. Knauer]{Kolja Knauer}
\address{Aix Marseille Univ, Universit\'e de Toulon, CNRS, LIS, Marseille, France\\Departament de Matem\`atiques i Inform\`atica,
Universitat de Barcelona (UB), Barcelona, Spain}
\author[G. Mercui-Voyant]{Guillaume Mercui-Voyant}
\address{\'Ecole Centrale Marseille, Technopole de Ch\^ateau-Gombert, Marseille, France}

%
%

\keywords{Numerical semigroup, Cayley poset, monoid, semigroup}

\subjclass[2010]{06A11, 06A07, 20M99}

\begin{document}

\begin{abstract}
We introduce Cayley posets as posets arising naturally from pairs $S<T$ of semigroups, much in the same way that Cayley graph arises from a (semi)group and a subset. We show that Cayley posets are a common generalization of several known classes of posets, e.g. posets of numerical semigroups (with torsion) and more generally affine semigroups. Furthermore, we give Sabidussi-type characterizations for Cayley posets and for several subclasses in terms of their endomorphism monoid. We show that large classes of posets are Cayley posets, e.g., series-parallel posets and (generalizations of) join-semilattices, but also provide examples of posets which cannot be represented this way. Finally, we characterize (locally finite, with a finite number of atoms) auto-equivalent posets - a class that generalizes a recently introduced notion for numerical semigroups - as those posets coming from a finitely generated submonoid of an abelian group.
\end{abstract}

\maketitle

\section{Introduction}\label{introduction}

Cayley graphs of groups are a classical topic in algebraic graph theory. They play a prominent role in (books devoted to) the area, see e.g.~\cite{God-01}. A particular and central result of the theory, due to Sabidussi~\cite{Sab-58} characterizes Cayley graphs of groups via the action of their automorphism group. Cayley graphs of monoids and semigroups have been less studied, but still there is a considerable amount of work, see e.g.~\cite{Kna-19} for a book and the references therein. In semigroups, analogues of the above result of Sabidussi remain open. Characterizations of Cayley graphs of certain classes of semigroups have been subject to some research effort, see~\cite{zbMATH05610940,Kel-06,zbMATH06864652,zbMATH06740692,zbMATH06948232,zbMATH06613956,zbMATH06184562,zbMATH06147894,zbMATH06029728,zbMATH06120589,zbMATH06093204,zbMATH05973394,zbMATH05886836,zbMATH06139402,zbMATH05701935,zbMATH05812671}. Conversely, also characterizations of semigroups admitting Cayley graphs with certain (mostly topological) properties have been investigated extensively~\cite{zbMATH05811972,zbMATH05647985,zbMATH06603377,Kel-02,Kel-03,Sol-06,Sol-11,Zha-08,Kna-10,Kna-16}. With respect to groups there is a well-known book concerning the topic~\cite{Whi-01}.

In the present paper we pursue questions of this type in a setting which naturally excludes groups from the picture. Namely, we study semigroups whose Cayley graph yields a partially ordered set (poset). We call the resulting posets \emph{Cayley posets}. Such objects arise also naturally from considering (relative) Green's relations on semigroups.
An important class of Cayley posets are numerical semigroups (see the books~\cite{Ros-09,Ass-16,Ram-05}), and more generally numerical semigroups with torsion (see~\cite{Gar-18,Lop-14}) and affine semigroups (see the books ~\cite{CLS,RGSlibro,Mil-05, Villalibro}). Further examples include families defined in order to construct diamond free posets in~\cite{Cza-15}.

Let us define the setting of this article in the slightly more general  language of acts, see~\cite{Kil-00}. Let $(X,S)$ be a \emph{semigroup (right) act}, i.e., $X$ is a set and $(S,\cdot)$ a semigroup with an operation such that $xs\in X$ and $(xs)s'=x(s\cdot s')$ for all $x\in X$ and $s,s'\in S$. We define the binary relation $\leq_S$ on $X$ via $x\leq _S y :\iff \exists s\in S: xs=y$.\footnote{Note that if $X$ is a (semi)group, $S\subseteq X$ is just a subset, and the $\cdot$ is right-multiplication, then the resulting relation coincides with the Cayley graph of $X$ with respect to $S$.}

In this paper we study acts such that $\leq_S$ is an order-relation. In this case, we denote the poset given by $\leq_S$ on $X$ as $P(X,S)$ and call it the \emph{Cayley poset} of $(X,S)$. Another way of seeing the objects we study is from the point of view of \emph{Green's relations}. The semigroup (acts) we consider are such that the R-classes of their Green's relations are trivial. Note that in the study of Green's relation the order based on the R-relation is often defined as $x\leq _S y :\iff \exists s\in S: sx=y$, but we stick to our convention motivated from directed (right) Cayley graphs. See standard semigroup books such as~\cite{Cli-61,How-76} for more details on Green's relation.

Let us give a more detailed overview of the objects and results of the present paper. First, we characterize the acts that generate posets as those that are \emph{s-unital} and \emph{acyclic} (Proposition~\ref{prop:sunital}). Then we prove that in fact every poset is the Cayley poset of a monoid act (Theorem~\ref{thm:everybodyisanact}). We thus consider natural special cases of Cayley posets of acts (always assuming them to be s-unital and acyclic). If $X=T$ is a semigroup such that $S<T$ is a subsemigroup and the act consists of right-multiplication, we say that the Cayley poset $P(T,S)$ is a \emph{semigroup poset}. If furthermore $S=T$ we say that $P(T,S)=P(S,S)$ is \emph{full}. If $T$ and $S$ are monoids $N,M$, and $M < N$ is a submonoid we say that $P(T,S)=P(N,M)$ is a \emph{monoid poset}. See Figure~\ref{fig:N} for an example.

\begin{figure}[h]
\centering
\includegraphics[width=.5\textwidth]{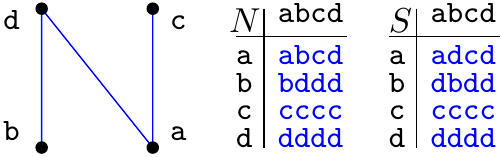}
\caption{The N-poset is a monoid poset via $P(N,\{a,c,d\})$ and is full as witnessed by $P(S,S)$.}
\label{fig:N}
\end{figure}


With these definitions we can describe well-known classes of posets as special cases of the above mentioned. An important class of Cayley posets are affine semigroups, that is, $P(N,M)$ where $N = \Z^n$ with the usual addition, and $M$ is a submonoid of $\Z^n$ without invertible elements different from $0$. These posets have been studied due to their relation with (binomial) lattice ideals  (see, for example, \cite{BCMP,Villalibro,Mil-05, Mor-05,Kat-10,Eis-96}). As a prominent subfamily of affine semigroups, one has numerical semigroups, which can be seen as $P(M,M)$ where $M$ is a submonoid of $\N$, see Figure~\ref {fig:numsem} for an example and~\cite{Ros-09,Ass-16,Ram-05} for literature on the topic. Recently, numerical semigroups with torsion were studied in~\cite{Gar-18,Lop-14}, where $N=\mathbb{N}\times T$ for a finite Abelian group $T$. Further examples of monoid posets include families defined in order to construct diamond free posets in~\cite{Cza-15}. Here for a group $G$ and a set of generators $H$ the corresponding Cayley poset is defined as $P(G\times\mathbb{Z},\langle H\times\{1\}\rangle\cup\{(e,0)\})$.

\begin{figure}[h]
\centering
\includegraphics[height=.4\textwidth]{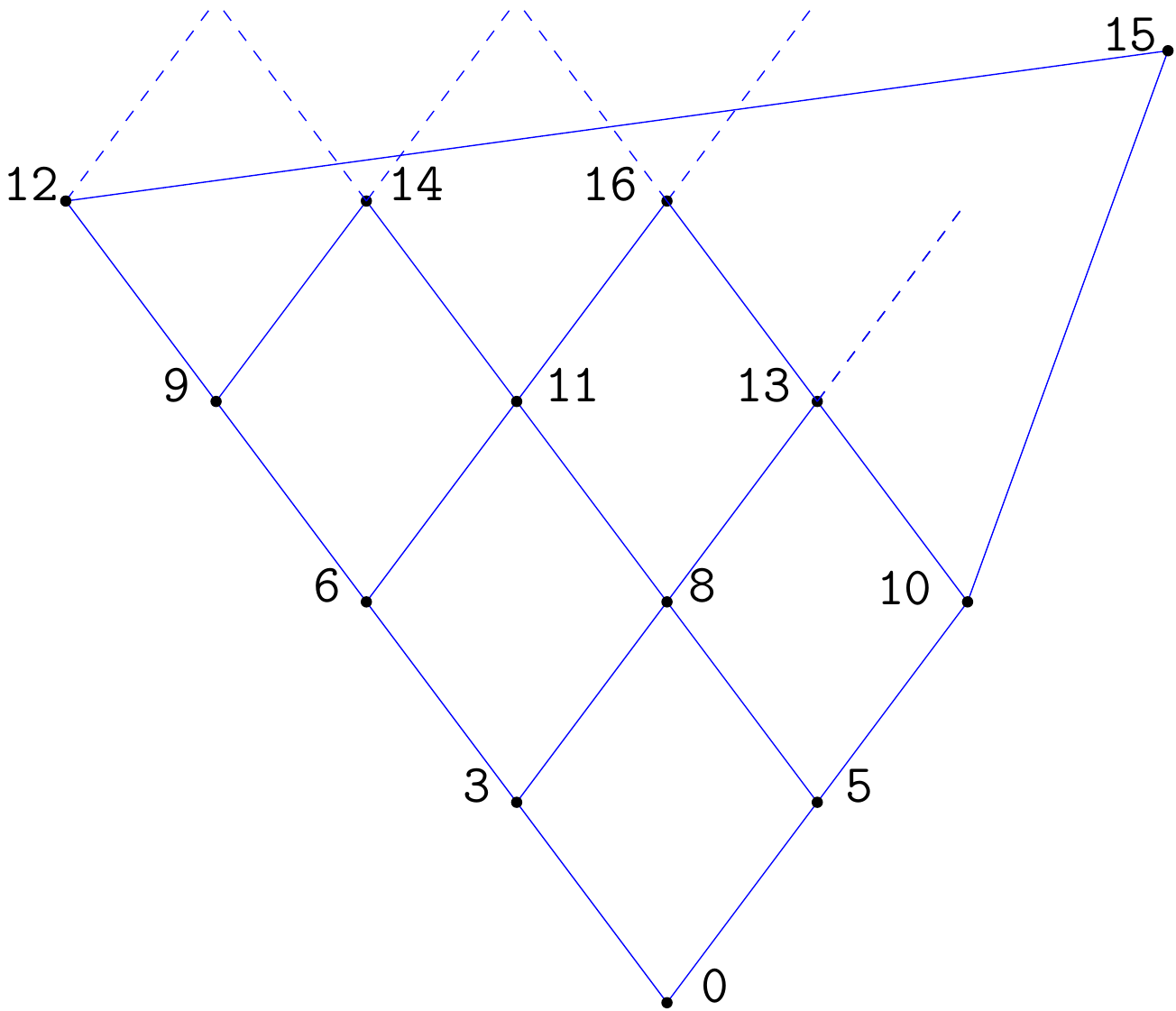}
\caption{The numerical semigroup $P(M,M)$, where $M$ is the submonoid of $\N$ generated by $3$ and $5$.}
\label{fig:numsem}
\end{figure}

Here, we present Sabidussi-type characterization results for semigroup posets (Theorem~\ref{thm:characterizationsemi}), monoid posets (Theorem~\ref{thm:characterizationmono}), full posets (Corollary~\ref{cor:characterizationfull}) and full monoid posets (Theorem~\ref{thm:characterizationfullmono}), i.e., these are characterizations in terms of the endomorphism monoid of the poset much like Sabidussi's classical characterization of Cayley graphs of groups~\cite{Sab-58}.

There are natural inclusions among the considered classes, (as mentioned above Cayley posets of acts coincide with all posets). In Sections \ref{sec:firstseparations},  \ref{sec:construction} and \ref{sec:examples} we give some properties of different types of Cayley posets and show that all these inclusions are strict, by giving posets in the respective difference sets. See the diagram in Figure~\ref{fig:diagram} for an illustration.

\begin{figure}[h]
\centering
\includegraphics[width=.8\textwidth]{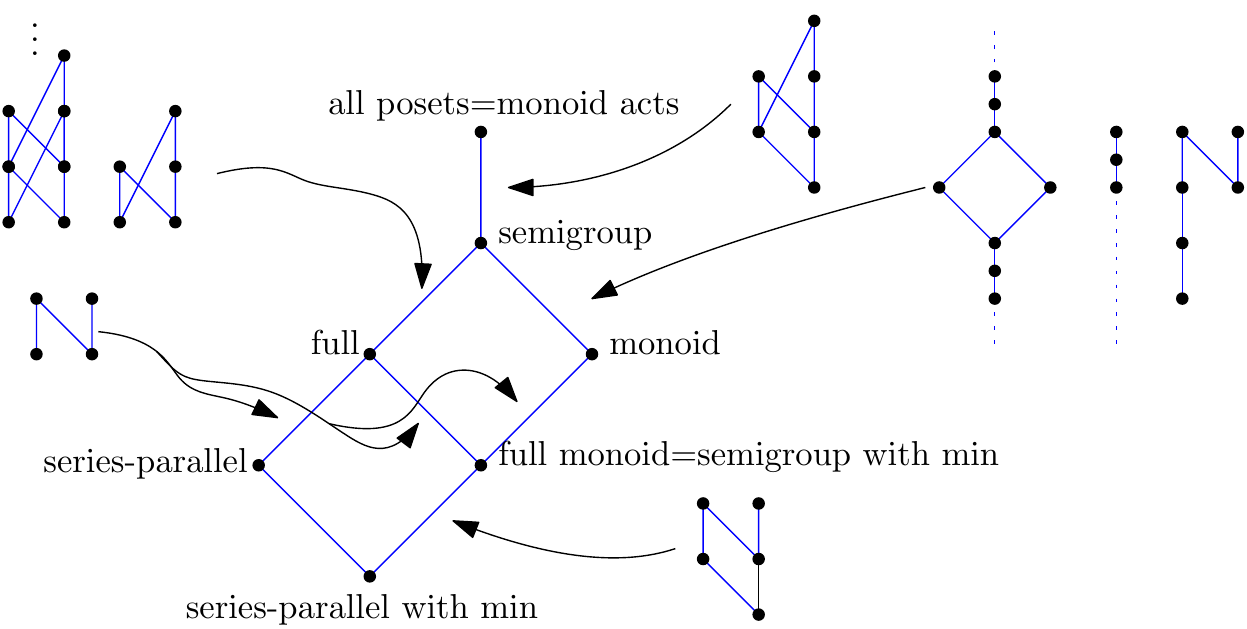}
\caption{Inclusions among considered classes and posets witnessing strictness of containment.}
\label{fig:diagram}
\end{figure}

Partially thanks to the characterization theorems we are able to show that semigroup posets are closed under natural poset operations such as addition of maxima or minima, Cartesian products, retracts, series and parallel composition, and certain blow-up operations. This way we construct large families of semigroup posets in Section~\ref{sec:construction}. A particular result  of this section is that series-parallel posets are full.

In~\cite[Section 5]{Cha-15} the authors characterize posets coming from finitely generated submonoids $M$ of $\Z^m$. In this result, there is a rather technical condition saying that a certain group has to be saturated. Section~\ref{sec:autoequivalent} is devoted to a generalization of this result. In Theorem \ref{equivasemigrupo} we prove that posets  coming from finitely generated submonoids of an abelian group are characterized by dropping this technical assumption. 


%

We conclude by resuming several questions that come up through  the paper  in Section~\ref{sec:conclude}.

\section{Characterizations}\label{sec:characterization}
In this section we present several characterization results for classes of Cayley posets. First, let us describe under which conditions on a semigroup act $(X,S)$ the  relation $\leq _S$ is an order relation.
\begin{proposition}\label{prop:sunital}
 Let $(X,S)$ be a semigroup (right) act. The relation $\leq _S$ is an order relation if and only if $(X,S)$ satisfies
 \begin{itemize}
 \item $\forall x\in X~\exists s\in S: xs=x$ \hfill (\emph{s-unital})
 \item $\forall x\in X~\forall s,s'\in S:x=xss'\Longrightarrow x=xs$ \hfill (\emph{acyclic})
\end{itemize}
\end{proposition}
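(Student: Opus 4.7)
The plan is to separate the order axioms (reflexivity, transitivity, antisymmetry) and match them against the two hypotheses. Transitivity will be automatic from associativity of the act, reflexivity will correspond exactly to s-unitality, and antisymmetry will correspond exactly to acyclicity. Both directions then fall out almost formally.

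First I would observe that transitivity of $\leq_S$ requires no hypothesis: if $xs=y$ and $ys'=z$ for some $s,s'\in S$, then $x(s\cdot s')=(xs)s'=ys'=z$, so $x\leq_S z$. Next, I would handle reflexivity: by definition $x\leq_S x$ means $xs=x$ for some $s\in S$, which is literally the s-unital condition. So s-unitality is equivalent to reflexivity of $\leq_S$.

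For antisymmetry, I would argue both directions. Assume $(X,S)$ is acyclic and suppose $x\leq_S y$ and $y\leq_S x$. Pick $s,s'\in S$ with $xs=y$ and $ys'=x$. Then $x(s\cdot s')=(xs)s'=ys'=x$, so by acyclicity $xs=x$, hence $y=xs=x$. Conversely, assume $\leq_S$ is antisymmetric and suppose $x\cdot(s\cdot s')=x$ for some $s,s'\in S$; setting $y=xs$ gives $x\leq_S y$ (via $s$) and $y\leq_S x$ (via $s'$, since $ys'=(xs)s'=x(s\cdot s')=x$), so antisymmetry forces $y=x$, i.e., $xs=x$, proving the acyclic condition.

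Combining these three equivalences gives the proposition. The only step that requires a moment's thought is the ``converse'' direction of antisymmetry, where one has to construct the pair $x\leq_S y\leq_S x$ out of the hypothesis $xss'=x$; the natural choice $y=xs$ works immediately. No step looks like a real obstacle, this is essentially a bookkeeping argument checking the three order axioms one at a time.
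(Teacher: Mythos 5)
Your proposal is correct and follows essentially the same route as the paper: transitivity for free from associativity of the act, reflexivity identified with s-unitality, and antisymmetry shown equivalent to acyclicity by unwinding the definitions (your two-directional argument just spells out the paper's chain of equivalences more explicitly).
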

\begin{proof}
 Clearly, $x\leq_S y$ and $y\leq_S z$ implies $xs=y$ and $ys'=z$ for some $s,s'\in S$. Thus, $z=(xs)s'=x(s\cdot s')$ and $x\leq_S z$, i.e., $\leq_S$ is transitive.
 
 Obviously, $\leq_S$ is reflexive if and only if the act is s-unital.
 
 Let us now see that $\leq_S$ is anti-symmetric if and only if the act is acyclic. We can do the following equivalent transformations starting with acyclicity, i.e., $x\leq_S y$ and $y\leq_S x\implies x=y$, which is equivalent to $xs=y$ and $ys'=x\implies x=y$, which in turn is equivalent to $xss'=x\implies xs=x$.
\end{proof}

Next, we show that indeed every poset is the Cayley poset of a monoid act.
\begin{theorem}\label{thm:everybodyisanact}
 For every poset $P$ there is a monoid act $(X,M)$ such that $P\cong P(X,M)$.
\end{theorem}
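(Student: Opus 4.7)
The plan is to construct an explicit monoid $M$ acting on the underlying set $X$ of $P$ for which $\leq_M$ is exactly the order of $P$. The natural candidate is the monoid of all \emph{upward self-maps} of $P$: set
\[
M:=\{f:X\to X \mid f(x)\geq x\text{ for all }x\in X\},
\]
equipped with the operation $f\cdot g := g\circ f$ and unit $\mathrm{id}_X$. The right act is given by $x\cdot f := f(x)$.

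First I would verify that $(X,M)$ really is a monoid act. Closure of $M$ under composition is immediate: if $f,g\in M$ then $(g\circ f)(x) = g(f(x)) \geq f(x) \geq x$. The identity lies in $M$, and the axiom $(xf)g = x(f\cdot g)$ reduces to $g(f(x)) = g(f(x))$. Thus $M$ is a monoid and $(X,M)$ a monoid act.

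Next I would check that $\leq_M$ coincides with the order $\leq$ of $P$. The direction $x\leq_M y \Rightarrow x\leq y$ is immediate since $y=f(x)\geq x$ for any $f\in M$. For the converse, given $x\leq y$ in $P$, define
\[
f_{x,y}(z)=\begin{cases} y & \text{if } z=x,\\ z & \text{otherwise.}\end{cases}
\]
Then $f_{x,y}(x)=y\geq x$ and $f_{x,y}(z)=z\geq z$ for $z\neq x$, so $f_{x,y}\in M$ and $x\cdot f_{x,y}=y$, giving $x\leq_M y$. Consequently $P\cong P(X,M)$. As a sanity check (and to connect to Proposition~\ref{prop:sunital}), note that s-unitality follows from $\mathrm{id}_X\in M$, and acyclicity from the fact that $g(f(x))=x$ together with $f(x)\geq x$ and $g(y)\geq y$ forces $f(x)=x$.

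There is essentially no obstacle here: the only creative step is picking a monoid large enough to realize every comparison yet small enough (i.e., order-preserving at the pointwise level) to produce no extraneous ones. The monoid of upward self-maps accomplishes both transparently. One could shrink $M$ further to, for example, the submonoid generated by the maps $f_{x,y}$ above, but this is not needed for the statement.
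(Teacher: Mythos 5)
Your construction is exactly the one used in the paper: the monoid of all upward self-maps with reversed-composition operation, the act $x\cdot f=f(x)$, and the maps $f_{x,y}$ to realize each comparability, together with the same checks of s-unitality and acyclicity. The proof is correct and essentially identical to the paper's.
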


\begin{proof}

Denote the ground set of the poset $P$ by $X$ and let $M = \lbrace \varphi : P \rightarrow P \mid \forall x \in P, x \leq \varphi(x) \rbrace$ with the operation $\varphi\varphi':=\varphi'\circ\varphi$. We prove that the mapping $x\cdot \phi:=\phi(x)$ is a monoid act such that $P\cong P(X,M)$.

Let $\varphi , \varphi' \in M$ then $\forall x \in P$ we have $\varphi\varphi' (x)=\varphi' \circ \varphi (x)= \varphi'(\varphi(x)) \geq \varphi(x) \geq x$ thus $\varphi\varphi' \in M$ and $ M$ is a semigroup. Moreover $id \in M$ thus $M$ is a monoid. Moreover, we clearly have $x\cdot \varphi\in X$ and $(x\cdot \varphi)\varphi'=x\cdot(\varphi\varphi')$ for all $x\in X$ and $\varphi,\varphi'\in M$. Thus $(X,M)$ is a monoid act. Now, let us show that the relation $x\leq_{\circ} y \Longleftrightarrow \exists_{\varphi \in M}: y = \varphi(x)$ is s-unital and acyclic. First, $ \forall x \in M , x \cdot id = x$ which shows that $\leq_{\circ}$ is s-unital. Second, for $x\in P$ and $\varphi,\varphi'\in M$ $ x\cdot \varphi \varphi' = x \Rightarrow \varphi(x) \leq \varphi'(\varphi(x)) =x$ and $x\leq \varphi(x)$ then $ \varphi(x) = x $ thus $\leq_{\circ}$ is acyclic. Now we will show that $\forall x,y \in P$ we have $x \leq y \Leftrightarrow x\leq_{\circ} y$. 

"$\Rightarrow$" : if $x \leq y$ then define $\varphi_{x,y}(z) := y$ if $z= x$ and $z$ otherwise. Clearly, $\varphi_{x,y} \in M$ and $ \varphi_{x,y} (x) =y$ thus $x  \leq_{\circ} y$.

"$\Leftarrow$" : if $x  \leq_{\circ} y$ then $y = x\cdot \varphi$ for some $\varphi\in M$. Thus, $y=\varphi(x)$, which by the definition of $M$ implies $x \leq y$.

This concludes the proof.

\end{proof}

For a poset $P$, the monoid of order endomorphisms of $P$ with the composition will be denoted by $\End(P)$. We recall that an \emph{upset} (resp. a \emph{downset}) in a poset $P$ is an upward (resp. downward) closed  subset $F\subseteq P$. The \emph{principal upset and downset of $x$} will be denoted by  $\upa x:=\{y\in P\mid x\leq y\}$ and $\downa x:=\{y\in P\mid x\geq y\}$, respectively\footnote{Upsets and downsets are sometimes called \emph{filters} and \emph{ideals}, respectively.} . 

For a semigroup $T$, we set $L(T):=\{\varphi_t \mid t\in T \}$, where $\varphi_t:T \to T$ is defined as $\varphi_t(x) = tx$, i.e., $\varphi_t$ is the left-multiplication by $t$. 
Since $\varphi_t \circ \varphi_{t'} = \varphi_{t \cdot t'}$, it follows that $L(T)$ is a semigroup with the composition and that $t \mapsto \varphi_t$ is a semigroup epimorphism from $(T,\cdot)$ to $(L(T), \circ)$. Moreover, $L(T)$ is a monoid whenever $T$ has a left identity.

The following lemma will be useful for the upcoming characterizations of different types of Cayley posets. 
\begin{lemma}\label{lem:basic}
Let $S<T$ semigroups and $P \cong P(T,S)$ a semigroup poset. Then, $L(T)$ is a subsemigroup of ${\rm End}(P)$. Moreover, $(L(T),\circ)$ and $(T,\cdot)$ are isomorphic semigroups.  
\end{lemma}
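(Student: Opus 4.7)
The plan is to decompose the claim into three clean substeps: (a) every left-multiplication $\varphi_t$ is an order endomorphism of $P$; (b) $L(T)$ is closed under composition (already indicated in the paragraph preceding the lemma, since $\varphi_t\circ\varphi_{t'}=\varphi_{t\cdot t'}$); (c) the epimorphism $t\mapsto \varphi_t$ from $(T,\cdot)$ to $(L(T),\circ)$ is injective, hence an isomorphism.

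For (a), I would take $x\leq_S y$ in $P$, so $xs=y$ for some $s\in S$, and then observe $\varphi_t(x)\cdot s=(tx)s=t(xs)=ty=\varphi_t(y)$, which gives $\varphi_t(x)\leq_S\varphi_t(y)$. Hence $\varphi_t\in\End(P)$ and combined with (b) this yields that $L(T)$ is a subsemigroup of $\End(P)$. Part (b) and the semigroup epimorphism statement are essentially tautological from $\varphi_t\circ\varphi_{t'}=\varphi_{tt'}$, so the real work lies in (c).

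For injectivity in (c), I would exploit that $(T,S)$ is s-unital (Proposition~\ref{prop:sunital}), applied to $T$ viewed as the underlying set of the act. Given $t,t'\in T$ with $\varphi_t=\varphi_{t'}$, pick $s_t,s_{t'}\in S$ such that $ts_t=t$ and $t's_{t'}=t'$. Evaluating the equal maps at $s_t$ yields $t=ts_t=\varphi_t(s_t)=\varphi_{t'}(s_t)=t's_t$, and evaluating at $s_{t'}$ yields $t'=t's_{t'}=\varphi_{t'}(s_{t'})=\varphi_t(s_{t'})=ts_{t'}$. Reading these equalities back through the definition of $\leq_S$ gives $t'\leq_S t$ and $t\leq_S t'$, so antisymmetry of the order $\leq_S$ (which holds since $P(T,S)$ is a poset) forces $t=t'$.

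The only mildly delicate point, and therefore the main obstacle, is this last step: one must resist the temptation to use ``right cancellation'' in $T$ (which is not available in a general semigroup) and instead route the argument through s-unitality plus antisymmetry of the Cayley poset. Once that observation is in place, the lemma reduces to a short verification.
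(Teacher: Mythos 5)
Your proposal is correct and follows essentially the same route as the paper: order-preservation of each $\varphi_t$ via $\varphi_t(x)s=t(xs)$, closure under composition from $\varphi_t\circ\varphi_{t'}=\varphi_{t\cdot t'}$, and injectivity of $t\mapsto\varphi_t$ from s-unitality plus antisymmetry. The paper phrases the injectivity step as $\upa t=\varphi_t(S)=\varphi_{t'}(S)=\upa t'$ forcing $t=t'$, which is exactly your pointwise argument (mutual comparability via the s-unital elements, then antisymmetry) in compressed form.
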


\begin{proof}
Let $\varphi_t \in L(T)$ and consider $t',t'' \in T$ such that $t' \leq t''$. Taking $s\in S$ such that $t's = t''$, we have that $\varphi_t(t')\leq \varphi_t(t')s=tt's=tt''=\varphi_t(t'')$. Thus, $L(T)\subseteq \End(P)$ and, hence, $L(T)$ is a subsemigroup of $\End(P)$.
 Finally, let us show that $t \mapsto \varphi_t$ is an injection. Assume that $\varphi_{t} = \varphi_{t'} $. Since $P(T,S)$ is a semigroup poset it follows that $\upa t' = \varphi_{t'}(S) = \varphi_{t}(S) = \upa t$, which implies that $t' = t$.
\end{proof}

We now proceed to characterize semigroup posets.

\begin{theorem}\label{thm:characterizationsemi}
Let $P$ be a poset. There are semigroups $S<T$ such that $P \cong P(T,S)$ if and only if there is a subsemigroup $L<\End(P)$ and an upset $F\subseteq P$ such that for every element $x\in P$ there is a unique $\varphi_x\in L$ such that $\varphi_x(F)=\upa x$. Moreover, in this case $S=F$ and $L=L(T)$.
\end{theorem}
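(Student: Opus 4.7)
My plan is to prove the two directions separately, with the correspondences $S = F$ and $L = L(T)$ emerging naturally from the construction.

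Forward direction ($\Rightarrow$). Assume $P \cong P(T,S)$ and identify $P$ with $T$. Take $F := S$ and $L := L(T)$. Then $F = S$ is an upset in $P$: if $s \in S$ and $s \leq_S t$, then $t = s \cdot s'$ for some $s' \in S$, so $t \in S$. By Lemma~\ref{lem:basic}, $L$ is a subsemigroup of $\End(P)$ and $T \cong L$ via $t \mapsto \varphi_t$, so $L = \{\varphi_x \mid x \in P\}$. For each $x \in P$, the left-multiplication $\varphi_x$ satisfies $\varphi_x(F) = xS = \upa x$; uniqueness follows since $\varphi_t(F) = \upa x$ forces $\upa t = \upa x$, hence $t = x$ by antisymmetry.

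Backward direction ($\Leftarrow$). Given $L$ and $F$ as in the hypothesis, set $T := L$ with composition as the semigroup operation, and $S := \{\varphi_f \mid f \in F\} \subseteq T$. The assignment $\Psi : P \to T$, $x \mapsto \varphi_x$, is injective by the uniqueness in the hypothesis. The heart of the proof is the identification
\begin{equation*}
 \varphi_x \circ \varphi_y = \varphi_{\varphi_x(y)} \qquad \text{for all } x \in P,\ y \in F.
\end{equation*}
Since $L$ is a subsemigroup, $\varphi_x \circ \varphi_y \in L$, and its image on $F$ is $\varphi_x(\upa y)$. As $\varphi_x$ is order-preserving and $\varphi_x(y) \in \varphi_x(\upa y)$, the element $\varphi_x(y)$ is the minimum of $\varphi_x(\upa y)$; using the uniqueness in $L$ one pins down $\varphi_x(\upa y) = \upa \varphi_x(y)$, and consequently the displayed identity holds.

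From this identification, three consequences follow. First, $\Psi$ is surjective onto $L$, hence bijective. Second, $S$ is closed under composition: for $f, f' \in F$, the identity gives $\varphi_f \circ \varphi_{f'} = \varphi_{\varphi_f(f')}$, and $\varphi_f(f') \in \varphi_f(F) = \upa f \subseteq F$ since $F$ is an upset containing $f$, so $S$ is a subsemigroup of $T$. Third, $\Psi$ realizes $P \cong P(T,S)$: for $x, y \in P$, the relation $\varphi_x \leq_S \varphi_y$ in $P(T,S)$ holds iff $\varphi_y = \varphi_x \circ \varphi_f$ for some $f \in F$, iff $y = \varphi_x(f)$ for some $f \in F$ (by the displayed identity and injectivity of $\Psi$), iff $y \in \upa x$, iff $x \leq y$ in $P$. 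The main obstacle I foresee is establishing the displayed identity cleanly — in particular verifying the inclusion $\upa \varphi_x(y) \subseteq \varphi_x(\upa y)$ — since this step requires combining the subsemigroup property of $L$ with the uniqueness hypothesis to promote a preimage of a given element above $\varphi_x(y)$ to one lying above $y$.
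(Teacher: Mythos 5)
Your proposal has the same skeleton as the paper's proof: the forward direction is identical, and the backward direction rests on the identity $\varphi_x\circ\varphi_y=\varphi_{\varphi_x(y)}$. But you never prove that identity, and you say so yourself: the inclusion $\upa \varphi_x(y)\subseteq\varphi_x(\upa y)$, which your ``pin down via uniqueness'' step needs, is exactly what you cannot verify, and it is the mathematical heart of the theorem. It also does not obviously follow from the hypotheses -- the uniqueness clause only says something about an element of $L$ once its image of $F$ is already known to be a principal upset. The paper argues in a different order: since $\varphi_x\circ\varphi_y$ lies in the subsemigroup $L$, it is again one of the distinguished maps, say $\varphi_w$ (this reading of the hypothesis is what the clause $L=L(T)$ in the statement reflects); then $\upa w=\varphi_w(F)=\varphi_x(\upa y)\subseteq\upa\varphi_x(y)$ gives $w\geq\varphi_x(y)$, while choosing $s\in F$ with $\varphi_y(s)=y$ (possible because $y\in\upa y=\varphi_y(F)$) gives $\varphi_x(y)=(\varphi_x\circ\varphi_y)(s)=\varphi_w(s)\in\varphi_w(F)=\upa w$, i.e.\ $\varphi_x(y)\geq w$, whence $w=\varphi_x(y)$. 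So one never proves surjectivity of $\varphi_x$ onto $\upa\varphi_x(y)$ up front; the equality $\varphi_x(\upa y)=\upa\varphi_x(y)$ comes out afterwards as a consequence. Your observation that $\varphi_x(y)$ is the minimum of $\varphi_x(\upa y)$ is half of this argument, but without first identifying the composite as some $\varphi_w$ the uniqueness hypothesis has nothing to bite on, and your ``promote a preimage above $\varphi_x(y)$ to one above $y$'' plan is precisely the step that remains unproved; the backward direction is therefore incomplete at its central point.

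Two further repairs are needed. Setting $T:=L$ and asserting that the displayed identity makes $\Psi\colon x\mapsto\varphi_x$ surjective onto $L$ is unjustified: the identity only shows that $\{\varphi_x\mid x\in P\}$ is closed under composition, not that it exhausts $L$, and if $L$ were strictly larger your map $\Psi$ would fail to be an isomorphism onto $P(T,S)$. The paper avoids this by defining the semigroup operation directly on the ground set of $P$ via $xy:=\varphi_x(y)$ and checking associativity with the same identity (equivalently, you should take $T:=\{\varphi_x\mid x\in P\}$). Note also that for associativity, respectively for closure of $\{\varphi_x\mid x\in P\}$ under composition, you need the identity for all $x,y\in P$, not only for $y\in F$ as you state it; the argument sketched above gives it in that generality, since every $y$ has a preimage in $F$ under $\varphi_y$.
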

\begin{proof}

"$\Rightarrow$": Let $P\cong P(T,S)$. By Lemma~\ref{lem:basic} we have $T\cong L(T) < \End(P)$. For every $x\in T$ the mapping $\varphi_x$  exists and is unique because $T\cong L(T)$. By the definition of $P(T,S)$, $S$ is clearly  an upset of $P$ and $\varphi_x(S) = \upa x$.

"$\Leftarrow$": Let $T$ be the ground set of $P$. For every $x,y \in T$, we define the operation $xy := \varphi_x(y)$. Before proving that this operation is associative we are going to show that  $\varphi_x \circ \varphi_y = \varphi_{\varphi_x(y)}$. Indeed, $\varphi_x \circ \varphi_y = \varphi_w$ for some $w \in P$ because $L$ is a semigroup. Moreover,  $\upa w = \varphi_w(F) = \varphi_x \circ \varphi_y(F) = \varphi_x(\upa y)$; and $\varphi_x(\upa y) \subseteq\, \upa \varphi_x(y)$ because $\varphi_x$ is an endomorphism of $P$. This proves $w \geq \varphi_x(y)$. Taking $s \in F$ so that $\varphi_y(s) = y$, we get that $\varphi_x(y) = \varphi_x \circ \varphi_y(s) = \varphi_w(s) \in \upa w$, i.e., $ \varphi_x(y) \geq w$. Thus, $w = \varphi_x(y)$ and we have that: $$(xy)z = \varphi_{xy}(z) = \varphi_{\varphi_x(y)}(z)= \varphi_x \circ \varphi_y(z)= \varphi_x(\varphi_y(z))= \varphi_x(yz) = x(yz).$$ Set $S := F$  and let us see that $S$ is a subsemigroup of $T$. Take $x,y \in S$, since $S$ is an upset we have that $xy=\varphi_x(y) \in\, \upa x\subseteq S$. Finally, let $x,y\in T$ such that $x \leqslant y$. Thus, $y \in \varphi_x(S)$, i.e., $y = xs$ for some $s\in S$. Conversely, if $y \in \varphi_x(S)$, then $y = \varphi_x(s) = x s$ for some $s \in S$ and, hence, $y \geq x$. This proves that $P\cong P(T,S)$.
\end{proof}

Next, we quickly deduce a characterization of full posets.
\begin{corollary}\label{cor:characterizationfull}
 A poset $P$ is a full semigroup poset if and only if there is a subsemigroup $T<\End(P)$ such that for every element $x\in P$ there is a unique $\varphi_x\in T$ such that $\varphi_x(P)=\upa x$.
\end{corollary}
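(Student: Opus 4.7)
The plan is to derive this characterization as a direct specialization of Theorem~\ref{thm:characterizationsemi} with the upset $F$ chosen to be all of $P$. The key observation is that $P(T,S)$ being full means $S=T$, which in the notation of Theorem~\ref{thm:characterizationsemi} (where $S=F$) corresponds exactly to the upset $F$ being the entire ground set of $P$. So the corollary should fall out by recognizing that ``$F=P$'' and ``full'' are two names for the same condition.

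For the forward direction, suppose $P \cong P(S,S)$ is full. Apply Theorem~\ref{thm:characterizationsemi} to the pair of semigroups $S < S$. The theorem provides a subsemigroup $L(S) < \End(P)$ and an upset $F$ of $P$ such that every $x$ admits a unique $\varphi_x \in L(S)$ with $\varphi_x(F) = \upa x$; moreover $F$ equals $S$, which is the whole ground set of $P$, hence $F = P$. Taking $T := L(S)$ gives exactly the subsemigroup of $\End(P)$ and property required by the corollary.

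For the reverse direction, suppose a subsemigroup $T < \End(P)$ with the stated property is given. Apply Theorem~\ref{thm:characterizationsemi} with $L := T$ and $F := P$ (trivially an upset). The theorem then builds a semigroup structure on the ground set of $P$ -- call this semigroup $T'$ -- together with a subsemigroup $S = F$ of $T'$ such that $P \cong P(T',S)$. Since $F = P$ coincides with the ground set of $T'$, we have $S = T'$ as sets, and since $S$ inherits the operation of $T'$, we have $S = T'$ as semigroups. Thus $P \cong P(T',T')$ is a full semigroup poset.

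There is essentially no obstacle: the proof is a translation exercise between the hypotheses of Theorem~\ref{thm:characterizationsemi} and the notion of fullness. The only point demanding explicit verification is that when $F$ equals the ground set, the subsemigroup $S$ produced by Theorem~\ref{thm:characterizationsemi} coincides with the ambient semigroup $T'$, which is immediate from the construction.
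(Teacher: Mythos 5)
Your proof is correct and follows essentially the same route as the paper, which also obtains the corollary by specializing Theorem~\ref{thm:characterizationsemi} via the observation that fullness ($S=T$) corresponds exactly to the upset $F$ being all of $P$. You merely spell out both directions in more detail than the paper's brief argument.
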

\begin{proof}
By Theorem~\ref{thm:characterizationsemi} we have that in a semigroup poset $P(T,S)$ the upset $F$ corresponds to $S$. Thus, the fact that $S=T$ corresponds to $F=P$. This yields the result.
%
%
\end{proof}

The following theorem gives a characterization of monoid posets.
\begin{theorem}\label{thm:characterizationmono}
 Let $P$ be a poset. There are monoids $M<N$ such that $P \cong P(N,M)$ if and only if there is a submonoid $L<\End(P)$ and a principal upset $\upa e\subseteq P$ such that for every element $x\in P$ there is a unique $\varphi_x\in L$ such that $\varphi_x(\upa e)=\upa x$. Moreover, in this case $\upa e=M$ and $L=L(N)\cong N$.
\end{theorem}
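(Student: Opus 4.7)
The plan is to adapt the proof of Theorem~\ref{thm:characterizationsemi} by incorporating a distinguished identity element on both sides.

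For the forward direction, assume $P \cong P(N,M)$ with monoids $M < N$. I would apply Lemma~\ref{lem:basic} to obtain that $L(N)$ is a subsemigroup of $\End(P)$ with $L(N) \cong N$. Since $N$ is a monoid with identity $1_N$, left-multiplication by $1_N$ equals $\mathrm{id}_P$, so $L(N)$ is in fact a submonoid. Setting $e := 1_N$, the principal upset $\upa e$ in $P(N,M)$ equals $\{1_N \cdot m \mid m\in M\} = M$, and for each $x\in N$, $\varphi_x(\upa e) = \varphi_x(M) = xM = \upa x$, with uniqueness of $\varphi_x$ in $L(N)$ following from $L(N) \cong N$.

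For the backward direction, I would take $N$ to be the ground set of $P$ and define $xy := \varphi_x(y)$. Associativity follows exactly as in Theorem~\ref{thm:characterizationsemi} via the identity $\varphi_x \circ \varphi_y = \varphi_{\varphi_x(y)}$, which is proved the same way (comparing $\upa w$ with $\varphi_x(\upa y)$ using that $\varphi_x$ is an endomorphism and that $\varphi_y(s) = y$ for some $s\in \upa e$). The new ingredient is showing $e$ is a two-sided identity. Since $L$ is a submonoid of $\End(P)$, it contains $\mathrm{id}_P$, which satisfies $\mathrm{id}_P(\upa e) = \upa e$; by the uniqueness clause this forces $\varphi_e = \mathrm{id}_P$, so $ex = \varphi_e(x) = x$ for every $x$. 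For the right identity, I would compute $\varphi_x \circ \varphi_e = \varphi_x \circ \mathrm{id}_P = \varphi_x$ and also $\varphi_x \circ \varphi_e = \varphi_{\varphi_x(e)} = \varphi_{xe}$; uniqueness then yields $xe = x$.

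It remains to verify that $M := \upa e$ is a submonoid and that $P\cong P(N,M)$. Clearly $e\in M$; for $x,y\in M$, $xy = \varphi_x(y) \in \varphi_x(\upa e) = \upa x \subseteq \upa e = M$ (since $x\geq e$), so $M$ is closed. The coincidence of $\leq$ with $\leq_M$ is literally the argument of Theorem~\ref{thm:characterizationsemi} with $F$ replaced by $M$. Finally, by construction left-multiplication by $x$ in $N$ equals $\varphi_x$, so $L(N) = \{\varphi_x \mid x\in N\} = L$ and the isomorphism $L(N) \cong N$ is given by $x \mapsto \varphi_x$, injective because distinct $x$ yield distinct $\upa x = \varphi_x(\upa e)$.

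The main obstacle beyond Theorem~\ref{thm:characterizationsemi} is producing and validating the identity: both the submonoid hypothesis on $L$ (supplying $\mathrm{id}_P$) and the \emph{principal} upset hypothesis on $F$ (supplying the distinguished element $e$) are used essentially, the former giving $\varphi_e = \mathrm{id}_P$ via uniqueness and the latter serving as the right identity through the $\varphi_x \circ \varphi_e$ calculation.
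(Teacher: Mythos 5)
Your proposal is correct and takes essentially the same route as the paper: the forward direction via Lemma~\ref{lem:basic} and Theorem~\ref{thm:characterizationsemi}, taking $e$ to be the identity of $N$ so that $M=\upa e$ and $\varphi_e=\mathrm{id}_P$ makes $L(N)$ a submonoid, and the backward direction by running the construction of Theorem~\ref{thm:characterizationsemi} with $F=\upa e$ and extracting the identity from $\mathrm{id}_P\in L$ via uniqueness. Your explicit check that $e$ is also a \emph{right} identity (through $\varphi_x\circ\varphi_e=\varphi_{xe}$ and injectivity of $x\mapsto\varphi_x$) merely spells out a step the paper leaves implicit.
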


\begin{proof}

"$\Rightarrow$":  We first observe that $M =\, \upa e$, where $e \in N$ is the identity element.  By Theorem \ref{thm:characterizationsemi} we have that 
$N \cong L(N)$ and $L(N)$
is a subsemigroup of ${\rm End}(P)$. Since $\varphi_e = id$, we conclude that $R$ is a submonoid of ${\rm End}(P)$.

"$\Leftarrow$": We take $N = L$ and $M = \upa e$, by Theorem \ref{thm:characterizationsemi} if suffices to prove that $N$ is a monoid and that $M$ is a submonoid.
Since $L$ is a submonoid of ${\rm End}(P)$, there exists $x \in P$ such that $\varphi_x = id$. Hence $\varphi_x(e) = id(e)$, which implies that $x = e \in M$ and $e$ is the identity. Thus, $N$ is a monoid and $M$ is a submonoid of $N$.
\end{proof}

We do not know, whether only requiring $L$ to be a subsemigroup in Theorem~\ref{thm:characterizationmono} yields a strictly larger class of semigroup posets.
To finish this section, we provide a slightly different type of characterization for full monoid posets. Before proving the characterization we introduce an easy lemma that will be useful in the forthcoming.

%
%
%

\begin{lemma}\label{lemafacil} Let $P \cong P(T,S)$ and let $m \in T$. If $m = mm$, then $x = mx$ for all $x \in\, \upa m$.
\end{lemma}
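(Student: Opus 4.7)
The plan is essentially a one-line computation, so most of the work is just unpacking the definitions. Recall that in $P(T,S)$ the relation $m \leq x$ means precisely that there exists $s \in S$ with $ms = x$. Hence the hypothesis $x \in \upa m$ immediately produces such a witness $s \in S$.

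Given this witness, I would simply compute, using associativity of the semigroup operation in $T$ together with the idempotency hypothesis $m = mm$:
\[
mx \;=\; m(ms) \;=\; (mm)s \;=\; ms \;=\; x.
\]
This is exactly the desired equality $x = mx$.

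There is no real obstacle here, since $S$ is a subsemigroup of $T$ and so $s$ (and therefore all of the products above) live inside $T$, where associativity applies. The only thing worth double-checking is the direction of the convention for the order: in the paper $m \leq x$ is defined by right-multiplication $ms = x$, which is what makes the associativity manipulation work from the left, allowing us to fold $mm$ together on the outside and recover $x$.
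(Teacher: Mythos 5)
Your proof is correct and is essentially the same as the paper's: both take a witness $s\in S$ with $ms=x$ and use associativity together with $mm=m$ to get $mx=m(ms)=(mm)s=ms=x$. No issues.
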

\begin{proof}If $x \in \upa m$, then there exists an $y \in S$ such that $x = my$; then $x = my = mmy= mx$.
\end{proof}

\begin{theorem}\label{thm:characterizationfullmono}
 A poset $P$ is a full monoid poset if and only if $P$ is a semigroup poset with global minimum.
\end{theorem}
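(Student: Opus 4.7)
The $(\Leftarrow)$ direction is immediate: if $P \cong P(N,N)$ for a monoid $N$ with identity $e$, then $(N,N)$ already exhibits $P$ as a semigroup poset, and $e \cdot x = x$ shows $e \leq_N x$ for every $x \in N$, so $e$ is a global minimum.

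For $(\Rightarrow)$, I plan to verify the hypotheses of Theorem~\ref{thm:characterizationmono} with the principal upset equal to all of $P$. Apply Theorem~\ref{thm:characterizationsemi} to obtain a subsemigroup $L < \End(P)$ and an upset $F \subseteq P$ such that for each $x \in P$ there is a unique $\varphi_x \in L$ with $\varphi_x(F) = \upa x$. Let $m$ denote the global minimum, so $\varphi_m(F) = \upa m = P$. The central claim is that $\varphi_m = id$. Once established, this gives $F = \varphi_m(F) = P$ and, since $id \in L$, promotes $L$ to a submonoid of $\End(P)$; then $\upa m = P$ is (trivially) a principal upset, and Theorem~\ref{thm:characterizationmono} yields $P \cong P(N,M)$ with $M = \upa m = P = N$, i.e., a full monoid poset.

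To prove the claim, observe that $\varphi_m(F) = P$ forces $\varphi_m : P \to P$ to be surjective, since its image already contains $\varphi_m(F) = P$. Because $L$ is closed under composition, $\varphi_m \circ \varphi_m = \varphi_z$ for a unique $z \in P$, and
\[
\upa z \;=\; \varphi_z(F) \;=\; \varphi_m\bigl(\varphi_m(F)\bigr) \;=\; \varphi_m(P) \;=\; P,
\]
so $z$ is a global minimum, whence $z = m$ by uniqueness of such. Hence $\varphi_m$ is idempotent, and a surjective idempotent is the identity: for any $y \in P$ write $y = \varphi_m(x)$ and then $\varphi_m(y) = \varphi_m \circ \varphi_m(x) = \varphi_m(x) = y$.

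The only delicate step is this idempotence-plus-surjectivity argument, which hinges on the uniqueness of $\varphi_m$ in $L$ together with the uniqueness of a global minimum in a poset. Lemma~\ref{lemafacil} is not required in this sketch, though it offers an alternative route by first identifying $m$ as a left identity in a representing semigroup; the advantage of going through Theorem~\ref{thm:characterizationmono} is that it bundles the right-identity verification into the submonoid structure of $L$ automatically.
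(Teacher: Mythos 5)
Your proof is correct, but it takes a different route from the paper. The paper argues directly on a representing semigroup $P\cong P(T,S)$ with minimum $m$: minimality forces $m\leq mm\leq ms=m$, so $mm=m$; Lemma~\ref{lemafacil} then gives $mx=x$ on $\upa m=T$, whence $S=T$, and a short extra computation shows $xm=x$, exhibiting $m$ as a two-sided identity. You instead lift everything to $\End(P)$: via Theorem~\ref{thm:characterizationsemi} you show $\varphi_m$ is a surjective idempotent, hence the identity, deduce $F=P$ and that $L$ is a submonoid, and then let Theorem~\ref{thm:characterizationmono} absorb the right-identity verification. Both are sound; the paper's version is elementary and self-contained, while yours recycles the Sabidussi-type machinery and avoids Lemma~\ref{lemafacil}. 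Two small points to tighten. First, ``because $L$ is closed under composition, $\varphi_m\circ\varphi_m=\varphi_z$ for some $z$'' is not justified by closure alone; you need the ``moreover'' clause $L=L(T)$ of Theorem~\ref{thm:characterizationsemi} (so that every element of $L$ is a left multiplication), or better, skip $z$ entirely: $\varphi_m\circ\varphi_m\in L$ maps $F$ onto $\varphi_m(P)=P=\upa m$, so the uniqueness clause at $x=m$ gives $\varphi_m\circ\varphi_m=\varphi_m$ at once. Second, your labels $(\Rightarrow)$ and $(\Leftarrow)$ are swapped relative to the statement as written (the ``full monoid poset $\Rightarrow$ minimum'' implication is the forward direction); both implications are nonetheless proved.
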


\begin{proof}
"$\Rightarrow$": Assume that $P \cong P(M,M)$ and denote by $e \in M$ its identity element. For all $x \in M$ we have that $ex = x$, thus $x \geqslant e$. We conclude
that $e$ is the minimum of $P$.

"$\Leftarrow$"  Assume that $P \cong P(T,S)$ and denote by $m$ the minimum of $P$. We have that $T = \ \upa m = \{ms \, \vert \, s \in S\}$. Then there exists an $s \in S$ such that $m = m s$. Moreover, since $m$ is the minimum, it follows that $m \leq m m \leq m s = m$ and we conclude that $mm = m$. By Lemma \ref{lemafacil} and the fact that $T =\ \upa m$, it follows that $m x = x$ for all $x \in T$. Thus $T =\ \upa m = \{ms \, \vert \, s \in S\} = S$. To finish the proof it suffices to verify that $x m = x$ for all $x \in T$, but this follows easily because $\upa x = \{x s \, \vert \, s \in S\}$ and $m$ is the minimum of $S$. Then we conclude that $S = T$ is a monoid with identity $m$ and $P$ is a full monoid poset.
\end{proof}

\section{First separations between classes of posets}\label{sec:firstseparations}

Figure \ref{fig:diagram} shows different classes of posets partially ordered by inclusion and examples separating the classes. 
This section is devoted to partially justify  this figure, the rest of this proof will be done in Sections \ref{sec:construction} and \ref{sec:examples}. A first easy observation is that
every full monoid poset has a global minimum which corresponds to the neutral element (see Theorem \ref{thm:characterizationfullmono}). Hence the N-poset (which is a full semigroup poset and a monoid poset, see Figure \ref{fig:N}) is not a full monoid. Moreover, the N-poset is not a series-parallel poset. Indeed, it is exactly the forbidden induced subposet for the elements of this class, see~\cite{Val-82}.

The main results of this section are examples of:
\begin{itemize}
\item[(a)] monoid posets which are not full semigroup posets (Theorem \ref{th:monoidnotfull} for an infinite poset and Corollary \ref{co:N4semignotfull} for a finite one),
\item[(b)] full semigroup posets which are not monoid posets (Theorem \ref{th:fullnotmonoid}), and
\item[(c)] posets which are not semigroup posets (Theorem \ref{th:notCayley}).
\end{itemize}
From (a) and (b) we have that there is not containment between the classes of monoid posets and full semigroup posets. 
From (c) we derive that semigroup posets form a strict subfamily of the class of all posets.

The infinite poset mentioned in (a) consists of the natural numbers $(\mathbb{N},\leq)$ ordered by $a < b$ if and only if $b - a \geq 2$ (see Figure \ref{fig:N23}). It turns out that this poset is isomorphic to 
the monoid poset $P(\N, \langle 2,3 \rangle)$ where $\N$ is the monoid of natural numbers with the addition and $ \langle 2,3 \rangle = \{2 \alpha + 3 \beta \, \vert \, \alpha, \beta \in \N \}$ is the submonoid of $\N$ spanned by $2$ and $3$.

\begin{theorem}\label{th:monoidnotfull}
The poset $(\mathbb{N},\leq)$ with $a < b$ if and only if $b - a \geq 2$ is a monoid poset but is not a full semigroup poset.
\end{theorem}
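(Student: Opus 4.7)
The plan is to handle the two halves separately. For the positive half, I would verify directly that $(\N, \leq) \cong P(\N, \langle 2, 3\rangle)$, where $\langle 2, 3 \rangle = \{0\} \cup \{k \in \N : k \geq 2\}$ is the submonoid of $(\N, +)$ generated by $2$ and $3$. Under this identification, ``$b - a \in \langle 2, 3\rangle$'' coincides exactly with ``$a = b$ or $b - a \geq 2$'', which is the order described.

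For the negative half, I would argue by contradiction. Suppose $P \cong P(T, T)$ for some semigroup $T$, and use the isomorphism to label the elements of $T$ as $a_0, a_1, a_2, \ldots$, so that $\upa a_n = a_n T$ for every $n$. The two minimal elements are $a_0, a_1$, and I will focus on the products $a_0^2$ and $a_0 a_1$. First I would show that $a_0$ is idempotent. Since $\varphi_{a_0}$ is order-preserving with image $\upa a_0 = \{a_n : n \neq 1\}$ and $a_0$ is the minimum of $\upa a_0$, one has $\varphi_{a_0}(\upa a_0) \subseteq\, \upa a_0^2$. The only element in $T \setminus \upa a_0$ is $a_1$, hence $\upa a_0 \setminus \upa a_0^2 \subseteq \{\varphi_{a_0}(a_1)\}$ and so $|\upa a_0 \setminus \upa a_0^2| \leq 1$. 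A quick case check shows that $a_0^2 = a_j$ with $j \geq 2$ always leaves at least two elements missing (for instance, $j = 2$ misses both $a_0$ and $a_3$), so $a_0^2 = a_0$. By Lemma~\ref{lemafacil} this yields $a_0 a_n = a_n$ for every $n \in \{0, 2, 3, 4, \ldots\}$.

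The contradiction will then come from computing the principal right ideal $(a_0 a_1) T$ in two ways. Since $a_0 a_1 \in\, \upa a_0$, write $a_0 a_1 = a_j$ with $j \in \{0, 2, 3, 4, \ldots\}$; then $(a_0 a_1) T =\, \upa a_j$. On the other hand, by associativity,
\[
(a_0 a_1) T = a_0 (a_1 T) = a_0 \cdot \upa a_1 = \{a_0 a_1\} \cup \{a_0 a_n : n \geq 3\} = \{a_j\} \cup \{a_3, a_4, a_5, \ldots\}.
\]
Comparing with $\upa a_j$ for each admissible $j$ gives a contradiction: $j = 0$ misses $a_2$; $j = 2$ gains $a_3 \notin\, \upa a_2$; $j = 3$ gains $a_4 \notin\, \upa a_3$; and for $j \geq 4$ the set contains $a_{j+1} \notin\, \upa a_j$. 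The step I expect to be the main obstacle is the preliminary idempotence argument, which hinges on the ``near-chain'' rigidity of $\upa a_0$; once $a_0^2 = a_0$ is available, the rest is a short associativity computation and finite case check.
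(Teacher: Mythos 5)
Your proof is correct, and it is organized differently from the paper's. Both halves of the positive part agree (the identification with $P(\N,\langle 2,3\rangle)$ is exactly the paper's), and both negative arguments rest on the same basic toolkit: in a full poset $\upa x = xT$, left multiplication is order-preserving, Lemma~\ref{lemafacil}, and associativity. But the paper never proves outright that the minimum-labelled element is idempotent: it splits into the two cases $0\cdot 0=0$ and $0\cdot 1=0$ (one of which must occur), and in the second case reaches a contradiction through a chain of small computations ($0\cdot 1\leq 0\cdot 3$, $1\cdot 2\in\{1,4\}$, etc.). You instead prove $a_0^2=a_0$ unconditionally, via the covering observation $\upa a_0=\varphi_{a_0}(T)\subseteq\, \upa a_0^2\cup\{a_0a_1\}$, so at most one element of $\upa a_0$ can escape $\upa a_0^2$, while any candidate $a_0^2=a_j$ with $j\geq 2$ misses at least two (e.g.\ $a_0$ and $a_{j+1}$); this eliminates the paper's second case at the outset. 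Your contradiction is then a single two-way computation of $(a_0a_1)T=a_0(a_1T)$ compared against $\upa a_j$ for every admissible $j$, all steps of which check out. What the paper's version buys is locality: its argument uses only the vertices $0,\dots,4$, which is exploited immediately afterwards in Corollary~\ref{co:Ncnotfull} for the finite truncations $(\N_c,\leq)$, $c\geq 4$. Your version, as written, quantifies over all $j$ and compares infinite tails $\{a_3,a_4,\dots\}$, so it does not literally stay inside $\{0,\dots,4\}$; it does, however, localize with only cosmetic changes (the same covering and two-way-ideal computation go through in $\N_4$), so the finite corollary would still be within reach. What your version buys is a cleaner structure: no case dichotomy, an idempotence step that is conceptual rather than computational, and one uniform contradiction instead of two separate ones.
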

\begin{figure}[h]
\centering
\includegraphics[height=.4\textwidth]{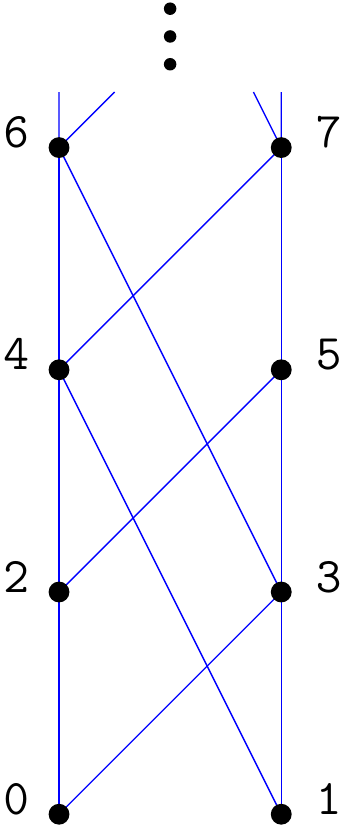}
\caption{$(\mathbb{N},\leq)$ with $a < b \ \Longleftrightarrow \ b - a \geq 2$}
\label{fig:N23}
\end{figure}

\begin{proof}
As we mentioned before, $(\N, \leq)$ is isomorphic to $P(\N, \langle 2,3\rangle)$ and, thus, it is a monoid poset. 
Assume by contradiction that $(\N, \leq)$ is a full semigroup poset. Thus, $\N$ can be endowed with an operation $\cdot$ so that $S = (\N, \cdot)$ is a semigroup and 
$(\N, \leq) \cong P(S,S)$. We first observe that $(\N, \leq)$ has two minima, namely $0$ and $1$. Then, we have that either $0\cdot0 = 0 $ or $0\cdot1 = 0$.

Assume first that $0 \cdot 0 = 0$. Then by Lemma \ref{lemafacil} we have that $0 \cdot y = y$ for all $y \in\ \upa 0 = \N - \{1\}$. Since the poset is full, then $0 \cdot 1 \geq 0$. Moreover,  $0 \cdot 1 \leq 0 \cdot 3  = 3$ and $0 \cdot 1 \leq 0 \cdot 4 = 4$, so we conclude that  $0\cdot 1=0$. 
We also have that $1 \cdot 1 = 1$ because $1 \cdot 1 \geq 1$ and $0 \cdot (1\cdot 1) =  (0\cdot 1)\cdot 1 = 0 \cdot 1 = 0$. So again by Lemma \ref{lemafacil} we have that $1 \cdot y=y$ for all $y \in\ \upa 1 = \N - \{0,2\}$.  Since the poset is full, then $1\cdot 2 \geq 1$. Moreover,  $1\cdot 2\leq 1 \cdot 4 = 4$; so we conclude that $1\cdot 2 \in \{1,4\}$.
  Finally, we have that  $2 = 0 \cdot 2 =  (0\cdot 1)\cdot 2 = 0\cdot (1 \cdot 2) \in \{0 \cdot 1, 0 \cdot 4\} = \{0,4\}$, a contradiction.

Assume now that $0\cdot 0 > 0$, then $0\cdot 1 = 0$. As  a consequence $0\cdot(1\cdot 0) = (0\cdot 1)\cdot 0 = 0\cdot 0 > 0$ and then $1\cdot 0 > 1$. Therefore, we have that  $1\cdot 1 = 1$ and by Lemma \ref{lemafacil} we have that $1\cdot y=y$ for all $y \in \ \upa 1$. Now we have that $1\cdot 2 \geq 1 \cdot 0 > 1$, $1 \cdot 2 \leq 1 \cdot 4 = 4$, so $1 \cdot 2 = 4$. However, this is not possible because $1 \cdot 0 > 1$, $1 \cdot 0 \leq 1 \cdot 2 = 4$ and $1 \cdot 0 \leq 1 \cdot 3 = 3$.
\end{proof}

For every $c \in \N$, one can consider the subposet $(\N_c,\leq)$ of $(\mathbb{N},\leq)$ induced by the interval of integers $[0,c] \cap \N$. If one observes Theorem \ref{th:monoidnotfull}, when proving that $(\N, \leq)$ is not a full semigroup poset the argument only involves the vertices $\{0,1,2,3,4\}$ of the poset. Hence, one can mimic the proof of Theorem \ref{th:monoidnotfull} to get the following result.

\begin{corollary}\label{co:Ncnotfull} $(\N_c,\leq)$ is not a full semigroup poset for all $c \geq 4$.
\end{corollary}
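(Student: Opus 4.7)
The plan is to mimic the proof of Theorem~\ref{th:monoidnotfull} verbatim in the finite setting, observing that the whole argument there only manipulates elements of $\{0,1,2,3,4\}$ and their pairwise products, and all of these remain available in $\N_c$ as soon as $c\geq 4$.

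First I would record the structural facts of $(\N_c,\leq)$ that the argument needs: both $0$ and $1$ are minima, $\upa 0=\{0,2,3,\dots,c\}$ and $\upa 1=\{1,3,4,\dots,c\}$, and in particular $\{3,4\}\subseteq \upa 0\cap\upa 1$ whenever $c\geq 4$. I would also isolate the one monotonicity principle on which the proof relies: in a full semigroup poset $P(S,S)$, the implication $a\leq b \Rightarrow c\cdot a\leq c\cdot b$ holds, since $b=as$ yields $cb=(ca)s$. This will license all of the comparisons of the form $x\cdot y\leq x\cdot z$ used below.

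Next, assuming for contradiction that $(\N_c,\leq)\cong P(S,S)$ for some semigroup $S=(\N_c,\cdot)$, I would split on whether $0\cdot 0=0$ or $0\cdot 0>0$, exactly as in Theorem~\ref{th:monoidnotfull}. In the first case, Lemma~\ref{lemafacil} yields $0\cdot y=y$ for all $y\in\upa 0$; the identities $0\cdot 3=3$ and $0\cdot 4=4$ combined with $0\cdot 1\leq 0\cdot 3$ and $0\cdot 1\leq 0\cdot 4$ force $0\cdot 1=0$, and an analogous step gives $1\cdot 1=1$; a second application of Lemma~\ref{lemafacil} then gives $1\cdot y=y$ for $y\in\upa 1$, leading to $1\cdot 2\in\{1,4\}$ and finally to $2=0\cdot 2=0\cdot(1\cdot 2)\in\{0,4\}$, a contradiction. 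In the second case the symmetric chain yields $0\cdot 1=0$, $1\cdot 0>1$, $1\cdot 1=1$, and $1\cdot 2=4$, which becomes incompatible with $1\cdot 0\leq 1\cdot 3=3$ once $1\cdot 0$ has been pinned to $4$ via $1\cdot 0\leq 1\cdot 2=4$.

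The only real task is therefore a bookkeeping verification: every inequality $x\cdot y\leq x\cdot z$ that the proof of Theorem~\ref{th:monoidnotfull} invokes has $x,y,z\in\{0,1,2,3,4\}$, and every intermediate product $x\cdot y$ whose value is examined also lies within $\{0,1,2,3,4\}$ (namely $\{0,3,4\}$). Since $\{0,1,2,3,4\}\subseteq \N_c$ for $c\geq 4$, nothing in the derivation escapes the ground set, and the original contradiction is reproduced inside $(\N_c,\leq)$. This is the whole proof; the mild obstacle is simply making this bookkeeping explicit rather than hidden inside a reference.
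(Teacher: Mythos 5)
Your proposal is correct and is essentially the paper's own argument: the published proof of Corollary~\ref{co:Ncnotfull} consists precisely of the observation that the contradiction in Theorem~\ref{th:monoidnotfull} only ever manipulates the elements $\{0,1,2,3,4\}$, so the same derivation goes through inside $(\N_c,\leq)$ once $c\geq 4$. Your explicit bookkeeping (the monotonicity $a\leq b\Rightarrow x\cdot a\leq x\cdot b$, the case split on $0\cdot 0$, and the check that all intermediate products lie in $\{0,1,2,3,4\}$) just spells out what the paper leaves implicit.
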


Moreover, for $c = 4$ we have that $(\N_4,\leq)$ is a monoid poset.

\begin{corollary}\label{co:N4semignotfull} $(\N_4,\leq)$ is a monoid poset which is not a full semigroup poset.
\end{corollary}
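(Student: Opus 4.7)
The corollary has two assertions. The statement that $(\N_4,\leq)$ is not a full semigroup poset is precisely the case $c=4$ of Corollary~\ref{co:Ncnotfull}, so I would quote it directly. The real content is therefore to exhibit monoids $M<N$ with $P(N,M)\cong(\N_4,\leq)$.

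My first step is to pin down the shape of any such construction. By the definition of $P(N,M)$, if $e$ denotes the identity of $N$ then $M=\upa e$, and for every $a\in N$ one has $a\cdot M=\upa a$, forcing $|M|\geq\max_{a}|\upa a|$. In $(\N_4,\leq)$ the principal upsets have sizes $4,3,2,1,1$ with the maximum uniquely attained at $\upa 0=\{0,2,3,4\}$. Hence I am forced to take $e=0$, $M=\{0,2,3,4\}$, and to leave $1$ as the unique element outside $M$.

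I would then define a binary operation $\star$ on $N=\{0,1,2,3,4\}$ by declaring $0$ to be the two-sided identity, declaring $3$ and $4$ to be left zeros (so that $3\star x=3$ and $4\star x=4$ for all $x\in N$), and by specifying the remaining products as
\[
1\star 1=1,\ \ 1\star 2=4,\ \ 1\star 3=3,\ \ 1\star 4=4,\ \ 2\star 1=2,\ \ 2\star 2=2\star 3=2\star 4=4.
\]
The rows $a\star M$ then yield $\upa 0=\{0,2,3,4\}$, $\upa 1=\{1,3,4\}$, $\upa 2=\{2,4\}$, $\upa 3=\{3\}$, $\upa 4=\{4\}$, matching $(\N_4,\leq)$, and closure of $M$ under $\star$ is visible from the table.

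The main obstacle is verifying associativity $(a\star b)\star c=a\star(b\star c)$. A large fraction of the $125$ triples is trivial: whenever $a\in\{0,3,4\}$, or $b=0$, or $c=0$, both sides agree immediately; whenever $b\in\{3,4\}$ one computes $a\star b\in\{3,4\}$ and both sides then collapse via the left-zero property. The residual cases are those with $a\in\{1,2\}$ together with either $b,c\in\{1,2\}$ or $b\in\{1,2\},\ c\in\{3,4\}$, which form a short finite list that I would dispatch by direct computation. Once associativity is established, the isomorphism $P(N,M)\cong(\N_4,\leq)$ is immediate from the display above.
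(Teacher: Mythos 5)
Your proposal is correct and takes essentially the same route as the paper: non-fullness is quoted from Corollary~\ref{co:Ncnotfull}, and the monoid-poset claim is witnessed by an explicit multiplication table on $N=\{0,1,2,3,4\}$ with submonoid $M=\{0,2,3,4\}$, exactly the structure the paper exhibits via Figure~\ref{fig:N4}. The sixteen residual associativity checks you defer do all go through for your table, so the construction is valid.
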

\begin{proof}
We know that $(\N_4,\leq)$ is not a full semigroup poset by Corollary \ref{co:Ncnotfull}. Moreover $(\N_4,\leq)$ is a semigroup poset $P(N,M)$, where $N = \N_4$, $M = \N_4 - \{1\}$ and with multiplication table as shown in Figure \ref{fig:N4}.
\begin{figure}[h]
\centering
\includegraphics[width=.4\textwidth]{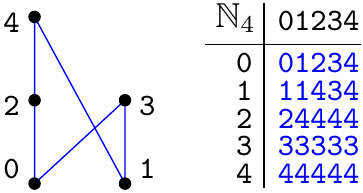}
\caption{($\N_4,\leq)$ is a monoid poset via $P(\N_4,\{0,2,3,4\})$ and not full by Corollary \ref{co:Ncnotfull}.}
\label{fig:N4}
\end{figure}
\end{proof}

For $c \in \N$, one can also consider the subposet of $(\mathbb{N},\leq)$ induced by the set $\{0,2,\ldots,c\}$, which we will denote by $\N_c^*$. Our next goal is to prove that for all $c \geq 6$, then $\N_c^*$ is not a semigroup poset. The poset $\N_6^*$ is depicted in Figure~\ref{fig:diagram}.

\begin{theorem} \label{th:notCayley}
Let $c \geq 6$ and denote by $(\N_c^*,\leq)$ the poset with ground set $\{0,2,\ldots,c\}$ and ordered by $a < b \Longleftrightarrow b - a \geq 2$. Then, $(\N_c^*,\leq)$ is not a semigroup poset.
\end{theorem}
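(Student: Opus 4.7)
In $(\N_c^*,\le)$ the element $0$ is the unique minimum, since $x-0\ge 2$ for every $x\in\{2,3,\ldots,c\}$, and hence $\upa 0 = \N_c^*$. In any realization $(\N_c^*,\le)\cong P(T,S)$ the identity $\upa 0=\{0\cdot s : s\in S\}$ forces $|S|\ge |T|=c$; combined with $S\subseteq T$ and the finiteness of $T$ this gives $S=T$. Theorem~\ref{thm:characterizationfullmono} then yields $(\N_c^*,\le)\cong P(M,M)$ for some monoid $M$ with identity $0$, so the plan reduces to ruling out any such monoid operation.

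\textbf{Narrowing $\varphi_2(3)$.} Consider the endomorphism $\varphi_2\colon x\mapsto 2\cdot x$ of $\N_c^*$; it satisfies $\varphi_2(M)=\upa 2$ and, via $\varphi_2\circ\varphi_y=\varphi_{\varphi_2(y)}$ applied on $M$, the identity $\varphi_2(\upa y)=\upa \varphi_2(y)$ for every $y\in M$. Set $a:=\varphi_2(3)\in\upa 2$. Since $M\setminus\upa 3=\{0,2,4\}$ and $\varphi_2(0)=2$, the set $\upa 2\setminus\upa a$ is covered by the three values $\{\varphi_2(0),\varphi_2(2),\varphi_2(4)\}$, so $|\upa 2\setminus\upa a|\le 3$. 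A direct computation in $\N_c^*$ yields
\[
|\upa 2\setminus\upa k| \;=\; \begin{cases} k-2 & \text{if } 4\le k\le c-1,\\ c-3 & \text{if } k=c,\end{cases}
\]
while $a=2$ is excluded by the pigeonhole inequality $|\upa 3|=c-3<c-2=|\upa 2|$. Hence $a\in\{4,5\}$ when $c\ge 7$ and $a\in\{4,5,6\}$ when $c=6$.

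\textbf{Eliminating the cases; main obstacle.} The case $a=6$ (only relevant for $c=6$) is immediate: covering would require $\{4,5\}\subseteq\{\varphi_2(2),\varphi_2(4)\}$, but $\varphi_2(2)\le\varphi_2(4)$ while $4$ and $5$ are incomparable in $\N_6^*$. For $a=5$, covering $\{4,6\}\subseteq \upa 2\setminus\upa 5$ together with $4<6$ forces $\varphi_2(2)=4$ and $\varphi_2(4)=6$; so $2\cdot 2=4$ and $\varphi_4=\varphi_2^2$. Then the monotonicity $4\le 6$ combined with $\varphi_2(6)\in \upa 5$ (since $6\in\upa 3$) places $\varphi_2(6)$ in $\upa 5\cap\upa 6=\{8,9,\ldots,c\}$, which is empty for $c\in\{6,7\}$; for $c\ge 8$ one iterates along the chain $2<4<6<\cdots$, exploiting the ``forbidden gap'' $k+1\notin\upa k$ to eventually contradict the surjection $\varphi_2(M)=\upa 2$ (e.g., for $c=8$ one shows $7\notin\varphi_2(M)$ because $\varphi_2(5)\le\varphi_2(8)=8$ together with $\varphi_2(5)\ge\varphi_2(2)=4$ and $\varphi_2(5)\in\upa 5$ force $\varphi_2(5)=8$, excluding $7$). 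The case $a=4$ is analogous: $\varphi_2(y)\in\upa 4$ for $y\in\upa 3$ precludes $\varphi_2(y)=5$ there, so the required $5\in\upa 2\setminus\upa 4$ must come from $\varphi_2(2)$ or $\varphi_2(4)$, but the monotonicity chain $\varphi_2(2)\le\varphi_2(4)\le\varphi_2(6)\in\upa 4$ combined with the ``N-poset'' incomparabilities of $\{3,4,5,6\}$ traps $5$ against a forbidden relation. I expect the main obstacle to be giving a uniform treatment of case $a=5$ for all $c\ge 8$: the contradiction there comes from chasing the orbit of $3$ under iterated $\varphi_2$ until it either stabilizes (contradicting surjectivity of some $\varphi_k$) or escapes to a maximal element, and a clean uniform write-up may require splitting on the parity of $c$ or organizing the argument via the powers $2^n$ inside $M$.
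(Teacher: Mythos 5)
Your reduction to a full monoid poset with identity $0$ (Theorem~\ref{thm:characterizationfullmono}), the identity $\varphi_2(\upa y)=\upa\varphi_2(y)$, and the pigeonhole bound $|\upa 2\setminus\upa a|\le 3$ for $a=2\cdot 3$ are all correct, and the cases $a=6$ (for $c=6$) and $a=5$ with $c\in\{6,7\}$ are fully closed. However, the proof is genuinely incomplete where you yourself flag the "main obstacle": in case $a=5$ (which forces $2\cdot 2=4$) you only treat $c=8$ and otherwise describe an orbit-chasing iteration whose uniform version you admit you do not have. This is exactly the configuration the paper handles by a concrete induction: assuming $2\cdot 2=4$, one shows $2\cdot x=x+2$ for every $x\in\{0,2,\ldots,c-2\}$, because at each step the only possible preimage of $k+2$ under $\varphi_2$ is $k$ (preimages $y\le k-1$ give $y+2\ne k+2$ by induction, while $y\in\upa(k-1)$ gives $2\cdot y\ge 2\cdot(k-1)=k+1$, and $k+2\notin\upa(k+1)$); then $2\cdot c\ge 2\cdot(c-2)=c$ and $2\cdot c\ge 2\cdot(c-3)=c-1$, impossible since $c-1$ and $c$ are the two incomparable maxima. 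Your sketch can be completed along these lines, but as written it is an expectation, not a proof, and it is the heart of the theorem.

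The case $a=4$ is also under-argued: the claim that the chain $\varphi_2(2)\le\varphi_2(4)\le\varphi_2(6)\in\upa 4$ "traps $5$" does not by itself produce a contradiction once $c\ge 7$ (the chain can sit inside $\{5\}\cup\upa 7$ without violating any comparability). What closes this case is a split on where $5$ comes from: if $2\cdot 2=5$, then $\varphi_2(z)\in\upa 5$ for all $z\in\upa 2$, so $\varphi_2(M)\subseteq\{2,4\}\cup\upa 5$ misses $6$, contradicting $\varphi_2(M)=\upa 2$; if instead $2\cdot 4=5$ and $2\cdot 2\ne 5$, then $2\cdot 2\in\upa 2\,\cap\downa 5=\{2,5\}$ forces $2\cdot 2=2$, and Lemma~\ref{lemafacil} gives $2\cdot 4=4\ne 5$, a contradiction. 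These are short patches, but they (and above all the general-$c$ induction for $a=5$) must be supplied before the argument counts as a proof.
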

\begin{proof}Assume by contradiction that $(\N_c^*, \leq)$ is a semigroup poset. We observe that $\N_c^*$ has $0$ as global minimum, then by Theorem \ref{thm:characterizationfullmono} we have that $(\N_c^*, \leq)$ is a full monoid poset with $0$ as identity element. We divide the proof depending of the value of $2 \cdot 2$.

{\it Case I: } If $2 \cdot 2 = 2$. One can follow the same ideas as in the proof of Theorem \ref{th:monoidnotfull} to get that this is not possible. More precisely, one can prove that: (i) $2 \cdot x = x$ for all $x \in \ \upa 2$, (ii) $2 \cdot 3 = 2$, (iii) $3 \cdot x = x$ for all $x \in\ \upa 3$, (iv) $3 \cdot 4 \in \{3,6\}$; and conclude that this is not possible since $4 = 2 \cdot 4 = (2 \cdot 3) \cdot 4 = 2 \cdot (3 \cdot 4) \in \{2 \cdot 3, 2 \cdot 6\} = \{3,6\}$.

{\it Case II: } If $2 \cdot 2 \notin \{2,4,5\}$. Since $4,5 \in\ \upa 2$, there exist $x, y \in \N_c^*$ such that $2 \cdot x = 4$ and $2 \cdot y = 5$. However, for all $z \in \N_c^* - \{3\}$ we have that $2 \cdot z \notin \{4,5\}$, a contradiction.

{\it Case III: } If $2 \cdot 2 = 5$. Since $4,6 \in\ \upa 2$, there exist $x, y \in \N_c^*$ such that $2 \cdot x = 4$ and $2 \cdot y = 6$. However, for all $z \in \N_c^* - \{3\}$ we have that $2 \cdot z \notin \{4,6\}$, a contradiction.

{\it Case IV: } $2 \cdot 2 = 4$.  We are going to prove by induction that $2 \cdot x = 2 + x$ for all $x \in \{0,2,\ldots,c-2\}$. The result holds for $x = 0$ and $x = 2$. Assume now that the result holds for $x \in \{0,2,\ldots,k-1\}$ and let us prove it for $x = k \in \{3,\ldots,c-2\}$. Since $k+2 \in \ \upa 2$, then there exists $x \in \N_c^*$ such that $2 \cdot x = k+2$. By induction hypothesis we have that $2 \cdot y = y+2 \not= k+2$ for $y \in \{0,2,\ldots,k-1\}$. Moreover, $2 \cdot (k-1) = k+1$ and, hence, for $y \in\ \upa (k-1) = \{k-1,k+1,\ldots,c\}$ we have that $2 \cdot y \geq k+1$. Hence we can only have that $2 \cdot k = k +2$. Finally we have that $2 \cdot (c-2) = c$ and $2 \cdot (c-3) = c-1$,  but this implies that $2 \cdot c \geq 2 \cdot (c-2) = c$ and $2 \cdot c \geq 2 \cdot (c-3) = c-1$, which is not possible because $c-1$ and $c$ are the two maxima of $(\N_c^*,\leq)$.
\end{proof}

Note that similar to the definitions of $(\N_c,\leq)$ and $(\N^*_c,\leq)$ there is a natural set of types of subposets of $P(\Z, \langle 2,3 \rangle)$ obtained from selecting all points between up to two chosen maxima and up to two chosen minima. We believe that it is interesting to study these posets. Probably, large enough posets of a given type all behave the same with respect to their Cayley properties.

We finish  this section considering $(N_i)_{i \in \N},$ a family of full semigroup posets that are not monoid posets. The poset $N_i$ is obtained from the N-poset by adding $i$ new elements and the cover relations $i < i-1 < \cdots < 1 < a$ (see Figure \ref{fig:Ni}).

\begin{theorem} \label{th:fullnotmonoid}
The poset $N_i$ described in Figure \ref{fig:Ni} is a full semigroup poset for all $i \geq 1$ and is not a monoid poset for all $i \geq 2$.
\begin{figure}[h]
\centering
\includegraphics[height=.3\textwidth]{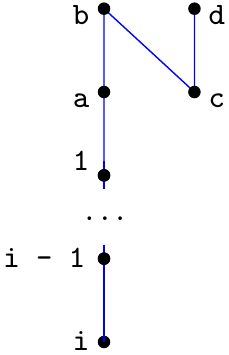}
\caption{Drawing of the poset $N_i$.}
\label{fig:Ni}
\end{figure}
\end{theorem}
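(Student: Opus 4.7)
The plan is to split the statement into two independent claims and prove each separately: (I) $N_i$ is a full semigroup poset for every $i\geq 1$; (II) $N_i$ is not a monoid poset for every $i\geq 2$. For (I) I would construct a semigroup operation explicitly; for (II) I would combine Theorem~\ref{thm:characterizationmono} with a chain/antichain obstruction.

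For (I), I would start from a semigroup operation $\cdot$ witnessing that the N-poset is full (recall that $P(S,S)$ yields the N-poset for a suitable semigroup $S$ on $\{a,b,c,d\}$, as stated in Figure~\ref{fig:N}), and extend it to $N_i$ by prescribing how the chain elements $1,\ldots,i$ multiply. A natural choice is, for chain elements $j,k\in\{1,\ldots,i\}$, to set $j\cdot k:=\min(j,k)$ (which is the larger element of the chain under the poset order), and, for a chain element $j$ and a non-chain element, to set $j\cdot a=a$, $j\cdot b=b$, $j\cdot c=a$, $j\cdot d=b$, together with $a\cdot j=a$, $b\cdot j=b$, $c\cdot j=c$, $d\cdot j=d$. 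I would then verify, by case analysis on whether each factor lies in the chain or in the N-poset, both (i) that the operation is associative and (ii) that $x\cdot N_i=\upa x$ for every $x\in N_i$. Once both are established, Corollary~\ref{cor:characterizationfull} yields $N_i\cong P(N_i,N_i)$, so $N_i$ is a full semigroup poset.

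For (II), assume for contradiction that $N_i\cong P(N,M)$ for some monoid $N$ with submonoid $M$. By Theorem~\ref{thm:characterizationmono}, there is a principal upset $\upa e\subseteq N_i$ and endomorphisms $\varphi_x\in\End(N_i)$ with $\varphi_x(\upa e)=\upa x$ for every $x\in N_i$; in particular $|\upa e|\geq|\upa x|$. The principal upsets of $N_i$ are easy to list: $|\upa j|=j+2$ for $j\in\{1,\ldots,i\}$, $|\upa a|=2$, $|\upa b|=|\upa d|=1$, and $|\upa c|=3$. For $i\geq 2$ the unique principal upset of maximum size is $\upa i$, with $|\upa i|=i+2\geq 4$. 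This forces $e=i$ and $\upa e=\upa i=\{i,i-1,\ldots,1,a,b\}$, which is a \emph{chain} in $N_i$.

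The obstruction is now immediate: the image of a chain under any order-preserving map is linearly ordered, so $\varphi_c(\upa i)$ must be a chain in $N_i$; but $\upa c=\{c,b,d\}$ contains the incomparable pair $b,d$ and is therefore not a chain. Hence no endomorphism $\varphi_c$ of $N_i$ can satisfy $\varphi_c(\upa i)=\upa c$, contradicting Theorem~\ref{thm:characterizationmono}. I expect the main obstacle to be the case analysis needed to verify associativity in (I); part (II) is then a short bookkeeping argument on upset sizes together with the chain observation just described.
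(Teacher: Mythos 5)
Your part (II) is correct and is essentially the paper's argument: the paper likewise uses $|\upa e|\geq|\upa i|=i+2$ to force $e=i$, and then derives a contradiction from the fact that $b,d\in\upa c$ are incomparable while $\upa i$ is a chain (phrased there as $b=c\cdot x$, $d=c\cdot y$ with $x,y$ comparable in $\upa i$; your ``the image of a chain under an order endomorphism is a chain'' is the same obstruction).

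Part (I), however, has a genuine gap: the specific operation you propose is not associative, and no choice of full semigroup operation on the N-poset can make it so. Your rules make every chain element $j$ a right identity on $\{a,b,c,d\}$ (i.e.\ $c\cdot j=c$, etc.) while left multiplication by $j$ collapses $c\mapsto a$ and $d\mapsto b$. Now test the triples $(c,j,c)$ and $(c,j,d)$: associativity forces $c\cdot c=(c\cdot j)\cdot c=c\cdot(j\cdot c)=c\cdot a$ and $c\cdot d=(c\cdot j)\cdot d=c\cdot(j\cdot d)=c\cdot b$. But if the base operation on $\{a,b,c,d\}$ witnesses that the N-poset is full, then $\{c\cdot a,\,c\cdot b,\,c\cdot c,\,c\cdot d\}=\upa c=\{c,b,d\}$ must contain three distinct values, which is impossible once two of the four products are forced to coincide pairwise. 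So the verification step you defer (``check associativity by case analysis'') would fail, and the construction as stated cannot be repaired merely by changing the base N-poset table. The paper avoids exactly this trap: it starts from an explicit associative table for the five-element poset $N_1$ (not for the N-poset alone) and extends it by routing all mixed products through the element $1$, namely $x\cdot j:=x\cdot 1$ and $j\cdot x:=1\cdot x$ for $x\in\{a,b,c,d\}$, together with $j\cdot k=k\cdot j$ being the higher of the two chain elements; in particular chain elements are \emph{not} right identities on the N-part, which is what lets associativity go through. To fix your proof you would need either to adopt such a construction (and exhibit a concrete associative $N_1$-table) or to choose mixed products compatible with the constraints $x\cdot y=x\cdot(j\cdot y)$ that your rule $x\cdot j=x$ imposes --- constraints which, as shown above, are unsatisfiable together with fullness of the restriction to $\{a,b,c,d\}$.
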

\begin{proof}
Figure \ref{fig:tablaN1} shows a multiplication table witnessing that $N_1$ is a full semigroup poset. 

\begin{figure}[h]
\centering
\includegraphics[height=.2\textwidth]{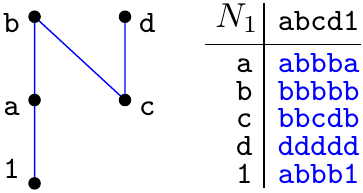}
\caption{$N_1$ is a full semigroup poset.}
\label{fig:tablaN1}
\end{figure}

By recurrence we define the multiplications involving the element $i$ (see Figure \ref{fig:tablaNi}): 
\begin{itemize} \item $x \cdot i = x \cdot i$ and $i \cdot x = 1 \cdot x$ for $x \in \{a,b,c,d\}$, and
\item $i \cdot n = n \cdot i = n$ for all $n \in \{1,\dots,i\}$.
\end{itemize}

\begin{figure}[h]
\centering
\includegraphics[height=.3\textwidth]{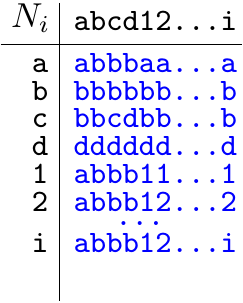}
\caption{Cayley table witnessing that $N_i$ is a full semigroup poset. }
\label{fig:tablaNi}
\end{figure}

It is straightforward to check that $N_i$ is a full semigroup poset with this multiplication table.
Moreover let $i \geq 2$ and assume that $N_i \cong P(N,M)$ is a monoid poset. By Theorem \ref{thm:characterizationmono}, $M$ corresponds to 
a principal upset $\upa e$ where $e$ is the identity element of $N$. Moreover, for the element $i \in N_i$ there is a map $\varphi_i$ such that $\varphi_i(\upa e) = \upa i$. Then, in particular, the cardinality of $\upa e$ is greater or equal to the one of $\upa i$, which is $i+2$. Hence, we deduce that $e = i$. Now,  we observe that $c \leq b$ and $c\leq d$. Then,  there exist $x,y \in\ \upa i$, such that $b =c \cdot x$ and $d = c \cdot y$.
Nevertheless, $\upa i$ is a chain, so either $x \geq y$ or $y \geq x$. If $x \geq y$ we get that $b = c \cdot x \geq c \cdot y = d$, a contradiction.  If $x \leq y$ we get that $b = c \cdot x \leq c \cdot y = d$, a contradiction too. Thus $N_i$ is not a monoid poset.
\end{proof}

\section{Constructions}\label{sec:construction}
In the present section we investigate the behavior of being a semigroup poset under standard poset and semigroup operations.
A very simple result of this kind is the following

\begin{observation}\label{obs:min}
 If $S$ is a semigroups such that $P \cong P(S,S)$ is full, then the poset $\check{P}$ obtained by adding a minimum is a full monoid poset $\check{P}=P(S\cup\{e\},S\cup\{e\})$, where $S\cup\{e\}$ is the monoid obtained from $S$ by adjoining the neutral element $e$.
\end{observation}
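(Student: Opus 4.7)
My plan is to unfold the definitions and verify everything directly; the only real content is to check that adjoining a neutral element to $S$ does not introduce any unintended order relations in the resulting Cayley poset.

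First I would extend the operation of $S$ to $S\cup\{e\}$ by setting $e\cdot x=x\cdot e=x$ for every $x\in S\cup\{e\}$ (in particular $e\cdot e=e$). Associativity follows from a short case analysis on how many of the three factors equal $e$: if none, it is the hypothesis on $S$; otherwise the identity axioms handle it. Hence $(S\cup\{e\},\cdot)$ is a monoid with identity $e$, and therefore by definition $P(S\cup\{e\},S\cup\{e\})$ is a full monoid poset (equivalently, one could invoke Theorem~\ref{thm:characterizationfullmono} once we show it is a full semigroup poset with a minimum, but it is simpler to appeal to the definition).

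Second I would identify $P(S\cup\{e\},S\cup\{e\})$ with $\check P$ as posets. The ground sets match after renaming the new minimum of $\check P$ as $e$. For $s,t\in S$ the equivalence $s\leq_{S}t\iff s\leq_{S\cup\{e\}}t$ holds: the forward implication uses the same witness $s'\in S$; conversely if $s\cdot s'=t$ with $s'\in S\cup\{e\}$, then either $s'\in S$ (giving the original relation) or $s'=e$, in which case $s=t$. Moreover $e\leq x$ for every $x\in S\cup\{e\}$ since $e\cdot x=x$. Finally, $x\leq e$ would require some $s'\in S\cup\{e\}$ with $x\cdot s'=e$; but if $s'\in S$ then $x\cdot s'\in S$ (whether $x\in S$ or $x=e$), and if $s'=e$ then $x\cdot s'=x$, forcing $x=e$. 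Thus $e$ is a strict global minimum added below $P$, which matches the construction of $\check P$.

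The main (very mild) obstacle is precisely this last bookkeeping: ruling out spurious comparabilities such as a non-trivial $x\leq e$ or new relations between elements of $S$. Both reduce to the observation that any product in $S\cup\{e\}$ either lies in $S$ or equals one of its factors, so the restriction of $\leq_{S\cup\{e\}}$ to $S$ coincides with $\leq_{S}$ and $e$ sits strictly below everything else.
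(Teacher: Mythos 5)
Your proposal is correct and is exactly the verification the paper intends: the Observation is stated without proof, with the construction $S\cup\{e\}$ already named in the statement, and your case analysis (associativity of the extended operation, the restriction of $\leq_{S\cup\{e\}}$ to $S$ coinciding with $\leq_S$, and $e$ being a strict global minimum with no element below it) is the routine check the authors leave implicit.
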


Another similar operation is the following:
\begin{observation}\label{obs:max}
 If $S<T$ are semigroups such that $P \cong P(T,S)$ is a (full) semigroup poset, then the poset $\hat{P}$ obtained by adding a maximum is a (full) semigroup poset $\hat{P}=P(T\cup\{a\},S\cup\{a\})$, where $S\cup\{a\}$ is the semigroup obtained from $S$ by adjoining an absorbing element $a$, i.e., $at=ta=a$ for all $t\in T$.
\end{observation}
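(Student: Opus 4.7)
My plan is to verify directly that the extended multiplication makes $(T\cup\{a\},\,S\cup\{a\})$ an s-unital, acyclic act whose Cayley poset is exactly $\hat{P}$, and to observe that fullness is preserved along the way.

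First I would confirm that the extended operation is associative on $T\cup\{a\}$. Any triple product containing at least one occurrence of $a$ collapses to $a$ by the absorbing rule, so associativity reduces to the known case inside $T$. This immediately gives a semigroup structure on $T\cup\{a\}$ and makes $S\cup\{a\}$ a subsemigroup; in the full case $S=T$ the equality $S\cup\{a\}=T\cup\{a\}$ is automatic.

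Next I would apply Proposition~\ref{prop:sunital} to verify that $(T\cup\{a\},\,S\cup\{a\})$ is s-unital and acyclic, so that $\leq_{S\cup\{a\}}$ is actually an order. The only new s-unital instance is $x=a$, which is handled by $aa=a$. For acyclicity, either $x=a$, in which case $xss'=a=x$ trivially gives $xs=a=x$, or $x\in T$; in the latter situation any factor equal to $a$ forces $xss'=a\notin T$, so the hypothesis $xss'=x$ compels $s,s'\in S$ and the hypothesis on $(T,S)$ applies.

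Finally I would identify the resulting order. For $x,y\in T$, any witness $s\in S\cup\{a\}$ with $xs=y$ cannot be $a$ (since $xa=a\notin T$), so $\leq_{S\cup\{a\}}$ restricted to $T$ coincides with $\leq_{S}$ and hence with the order of $P$. On the other hand $xa=a$ for every $x\in T\cup\{a\}$ shows that $a$ lies above everything, while $as=a$ for every $s\in S\cup\{a\}$ shows that nothing in $T$ lies above $a$. Therefore $P(T\cup\{a\},\,S\cup\{a\})\cong\hat{P}$, and if $P$ was full ($S=T$) then so is this new representation. The whole statement is routine; the only point that needs care is the acyclicity case-analysis when $a$ appears among the factors $s,s'$.
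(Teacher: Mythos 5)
Your proof is correct: the paper states this as an observation without giving a proof, and your direct verification (associativity of the extended operation, s-unitality and acyclicity via Proposition~\ref{prop:sunital}, and the identification of the resulting order with $\hat{P}$, noting fullness is preserved when $S=T$) is exactly the routine argument the authors intend. No gaps; the careful case analysis when $a$ occurs among the factors is precisely the point that needed to be checked.
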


The \emph{Cartesian product} of two posets $P\times P'$ is defined on the product set by setting $(x,x')\leq (y,y')$ if and only if $x\leq y$ and $x'\leq y'$. The Cartesian product of two semigroups is just defined by componentwise operation.

\begin{observation}
 If $P$ and $P'$ are semigroup posets, then $P\times P'$ is a semigroup poset. Moreover, $P \times P'$ is full (resp. a monoid poset) whenever both $P$ and $P'$ so are. 
\end{observation}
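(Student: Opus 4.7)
The plan is to take explicit semigroup representations $P \cong P(T,S)$ and $P' \cong P(T',S')$ given by hypothesis, form the componentwise product semigroups, and show that the resulting Cayley poset is isomorphic to the Cartesian product $P \times P'$. Concretely, I would set $\widetilde{T} := T \times T'$ equipped with the componentwise operation $(t_1,t_1') \cdot (t_2,t_2') := (t_1 t_2, t_1' t_2')$, and $\widetilde{S} := S \times S'$. It is routine that $\widetilde{S}$ is a subsemigroup of $\widetilde{T}$.

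Next, I would verify that $\leq_{\widetilde S}$ on $\widetilde T$ is exactly the Cartesian product order. For the forward direction, $(x,x') \leq_{\widetilde S} (y,y')$ means there exists $(s,s') \in \widetilde S$ with $(x,x')(s,s') = (y,y')$; reading this componentwise gives $xs=y$ and $x's'=y'$, hence $x \leq_S y$ and $x' \leq_{S'} y'$, i.e. $(x,x') \leq (y,y')$ in $P \times P'$. The converse is identical, assembling the witnesses $s \in S$ and $s' \in S'$ into $(s,s') \in \widetilde S$. This yields $P(\widetilde T,\widetilde S) \cong P \times P'$ and, in particular, shows that $\leq_{\widetilde S}$ is an order relation, so $(\widetilde T,\widetilde S)$ is automatically s-unital and acyclic by Proposition~\ref{prop:sunital}.

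For the refinements, the fullness case is immediate: if $S = T$ and $S' = T'$, then $\widetilde S = \widetilde T$, so $P \times P' \cong P(\widetilde T,\widetilde T)$ is full. For the monoid case, if $T,T'$ are monoids with identities $e,e'$ and $S,S'$ are submonoids, then $\widetilde T$ is a monoid with identity $(e,e')$ and $\widetilde S$ contains it and is closed under multiplication, hence is a submonoid.

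No serious obstacle is expected; the only subtlety worth flagging is just the clean identification $\leq_{\widetilde S}\, =\, \leq_S \times \leq_{S'}$, after which everything else is a bookkeeping check on the semigroup/monoid structure carried over to the product.
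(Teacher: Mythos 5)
Your proposal is correct and follows essentially the same route as the paper: both form the componentwise product semigroups $T\times T'$ and $S\times S'$ and identify $P(T\times T', S\times S')$ with the Cartesian product order, then observe the full and monoid cases directly. You merely spell out the order-identification in more detail than the paper, which states it as immediate.
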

\begin{proof}
 If $P\cong P(N,M)$ and $P'\cong P(N',M')$, then $P\times P'\cong P(N\times N',M\times M')$ and $M\times M'<N\times N'$. We have that $M \times M' = N \times N'$ if and only if $M = N$ and $M' = N'$. Moreover, $N \times N'$ is a monoid and $M \times M'$ is a submonoid of it if both $N$, $N'$ are monoids and $M < N, M' < N'$ are submonoids. 
\end{proof}

For semigroups $S<T$ we call a semigroup-endomorphism $\sigma:T \to S$ of $T$ a \emph{retract} if $\sigma(T)=S$ and for the restriction to $S$ we have $\sigma_{|S}=id_{S}$. Note that if $P \cong P(T,S)$ and $\sigma:T \to S$ a retract, then $\sigma$ also is an order endomorphism of $P$.

\begin{proposition}\label{prop:retract}
 Let $S<T$ be semigroups such that $P \cong P(T,S)$ and let $\sigma:T \to S$ be a retract, then there is a semigroup $T'$ such that $P \cong P(T',T')$, i.e., $P$ is full.
\end{proposition}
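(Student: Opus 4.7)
The plan is to define a new semigroup structure on the set $T$ which ``folds in'' the retraction $\sigma$, so that multiplying by an arbitrary element of $T$ under the new operation has the same effect as multiplying by an element of $S$ under the old operation. Concretely, I would set, for $x,y\in T$,
\[
x \ast y \;:=\; x\cdot \sigma(y),
\]
and call $T'=(T,\ast)$. This is the only natural candidate: since we want every element of $T'$ to produce an upward step in $P$, we must ``route'' right-multiplication through $S$, and $\sigma$ is exactly the device that does so.

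First I would check that $\ast$ is associative, hence that $T'$ really is a semigroup. The computation uses crucially that $\sigma$ is a semigroup homomorphism and that $\sigma_{|S}=\mathrm{id}_S$: on the one hand $(x\ast y)\ast z=(x\cdot\sigma(y))\cdot\sigma(z)$, and on the other hand $x\ast(y\ast z)=x\cdot\sigma\bigl(y\cdot\sigma(z)\bigr)=x\cdot\sigma(y)\cdot\sigma(\sigma(z))=x\cdot\sigma(y)\cdot\sigma(z)$, because $\sigma(z)\in S$ and so $\sigma(\sigma(z))=\sigma(z)$. Associativity of the original operation on $T$ then gives equality.

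Next I would compare the order $\leq_{T'}$ (with $T'$ acting on itself by right multiplication via $\ast$) with the given order $\leq_S$ on $T$. By definition $x\leq_{T'} y$ iff there exists $t\in T$ with $x\ast t=y$, i.e.\ $x\cdot\sigma(t)=y$. Since $\sigma(T)=S$, the set $\{\sigma(t)\mid t\in T\}$ equals $S$, so this is equivalent to $\exists s\in S$ with $x\cdot s=y$, i.e.\ $x\leq_S y$. Hence the two relations coincide; in particular $\leq_{T'}$ is a partial order (because $\leq_S$ is, by hypothesis), so $P(T',T')$ is a well-defined full semigroup poset with $P(T',T')\cong P(T,S)\cong P$.

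I do not expect any serious obstacle here; the only subtle point is the third equality in the associativity check, which is precisely where both defining properties of a retract (homomorphism plus fixing $S$ pointwise) enter. Once the operation is seen to be associative, everything else is immediate from $\sigma(T)=S$.
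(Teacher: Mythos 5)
Your proposal is correct and follows essentially the same route as the paper: define the new operation $t\ast t'=t\sigma(t')$, verify associativity using that $\sigma$ is an idempotent homomorphism fixing $S$, and observe that since $\sigma(T)=S$ the induced order coincides with $\leq_S$. Your explicit use of surjectivity of $\sigma$ to show no comparabilities are gained or lost is in fact slightly more detailed than the paper's one-line remark, but it is the same argument.
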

\begin{proof}
 Let $P \cong P(T,S)$ and let $\sigma:T \to S$ be a retract. Define a new operation on $T$ by $t\cdot t':=t\sigma(t')$ and call the new semigroup $T'$.
 
 First, we check that $\cdot$ is associative. Transform $t\cdot(t'\cdot t'')=t\sigma(t'\sigma(t''))$. Since $\sigma$ is a homomorphism and since it is a retract it is idempotent, the latter equals $t\sigma(t')\sigma(t'')=(t\cdot t')\cdot t''$.
 
 Now, observe that since $\sigma$ is the identity on $S$ we have $t=t's\iff t=t'\cdot s$, i.e., both orders are the same. 
 
 We conclude that $P \cong P(T',T')$ is full.
\end{proof}

An element $x$ of a semigroup $T$ is called \emph{irreducible} if $x=ab\implies x\in\{a,b\}$ for all $a,b\in T$. Note that for $P \cong P(T,S)$ the set of irreducibles of $T$ is a subset of $S\cup \Min(P)$, since if $y<x$ and $x\notin S$, then there is $s\in S$ such that $ys=x$. Furthermore, $x$ is \emph{self-centered} if $yx=x\iff xy=x$ for all $y\in T$. Note that if $x$ commutes with every element of $T$ or if it is (right and left) cancellative, then $x$ is self-centered.


Let $x\in P$ a poset element and $Q$ another poset, we denote by $P_xQ$ the \emph{blowup} of $x$ by $Q$, which is the poset where $x$ has been replaced by a copy of $Q$ and all elements of $Q$ behave with respect to the elements of $P\setminus x$ as $x$ did (see Figure \ref{fig:exblowup} for an example).

\begin{figure}[h]
\centering
\includegraphics[height=.25\textwidth]{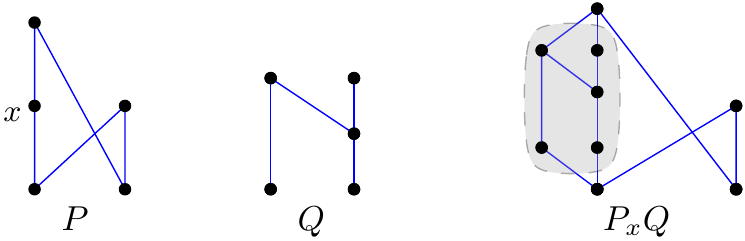}
\caption{Example of a blowup.}
\label{fig:exblowup}
\end{figure}

\begin{theorem}\label{thm:blowup}
 Let $P$ and $Q$ be semigroup posets and $x\in P$ irreducible and self-centered. If $x\in \Min(P)$ or $Q$ is full, then $P_xQ$ is a semigroup poset. If $P$ and $Q$ are full, then $P_xQ$ is full. If $P$ is a monoid poset with neutral element $e_P\neq x$ or $Q$ is a monoid poset as well, then $P_xQ$ is a monoid poset.
\end{theorem}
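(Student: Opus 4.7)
My plan is to realize $P_xQ$ as a semigroup poset by combining semigroup representations $P\cong P(T,S)$ and $Q\cong P(T',S')$, possibly after first replacing them by more convenient representations. Take the new underlying set to be $T'':=(T\setminus\{x\})\cup T'$, identifying the blown-up copy of $Q$ with $T'$. Define a multiplication $*$ on $T''$ by: $a*b:=a\cdot_Tb$ if $a,b\in T\setminus\{x\}$; $a*b:=a\cdot_{T'}b$ if $a,b\in T'$; and for a mixed pair with $a\in T\setminus\{x\}$ and $b\in T'$ (symmetrically otherwise), $a*b:=a\cdot_Tx$ whenever $a\cdot_Tx\neq x$, and $a*b:=b$ otherwise. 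Irreducibility of $x$ keeps products of elements of $T\setminus\{x\}$ from hitting the removed element $x$, while self-centeredness of $x$ makes the condition $a\cdot_Tx=x$ left/right symmetric, so the two mixed rules are consistent.

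The first technical step is proving associativity of $*$ by a case split on where the three factors sit. The nontrivial cases collapse via associativity of $\cdot_T$ and $\cdot_{T'}$, together with the irreducibility-based observation that if $a,b\in T\setminus\{x\}$ and $(a\cdot_Tb)\cdot_Tx=x$, then, rewriting as $a\cdot_T(b\cdot_Tx)=x$ and using $a\neq x$, irreducibility forces $b\cdot_Tx=x$, and by self-centeredness also $x\cdot_Tb=x$. The same idea handles the case of two $T'$-factors sandwiching a $T\setminus\{x\}$-factor. I expect the bookkeeping here to be the main source of tedium.

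The next task is to choose $S''\subseteq T''$ so that $\leq_{S''}$ is precisely the blowup order. I observe first that irreducibility forces $x\in S$ whenever $x\notin\Min(P)$: any $a<x$ with $a\neq x$ corresponds to $a\cdot_Ts=x$ for some $s\in S$, which by irreducibility of $x$ gives $s=x\in S$. In the $Q$-full branch (so $S'=T'$) one takes $S'':=(S\setminus\{x\})\cup T'$; closure of $S''$ under $*$ on mixed products reduces to $s\cdot_Tx\in S$ for $s\in S$, which holds because $x\in S$ combined with $S$ being a subsemigroup. In the $x\in\Min(P)$ branch, the key simplification is that no element of $T\setminus\{x\}$ lies below any element of $T'$ in the blowup, so $S''$ only needs to generate upward relations; depending on whether $x\in S$ or not, the natural choice is $(S\setminus\{x\})\cup S'$ or $S\cup S'$ (possibly enlarged to secure closure). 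A piece-by-piece check then confirms $\leq_{S''}$ gives the blowup order, using $s$-unitality and acyclicity from Proposition~\ref{prop:sunital} inherited from $(T,S)$ and $(T',S')$.

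For the fullness claim with $P,Q$ both full, $S=T$ and $S'=T'$ immediately give $S''=T''$. For the monoid claim, if $P$ is a monoid poset with neutral $e_P\neq x$, then $e_P\in T\setminus\{x\}$ remains a two-sided identity for $*$ on $T''$ (on $y\in T'$ it gives $e_P*y=y$, since $e_P\cdot_Tx=x$); if instead $e_P=x$ and $Q$ is also a monoid poset, the neutral $e_{T'}\in T'$ takes over as the identity of $T''$, with $e_{T'}*a=x\cdot_Ta=a$ for $a\in T\setminus\{x\}$ because $x=e_P$ is neutral in $T$. The principal obstacle I anticipate is the closure of $S''$ under $*$ in the $x\in\Min(P)$, $x\notin S$ subcase: the clean argument via $x\in S$ is unavailable, and one likely has to invoke minimality of $x$ together with $s$-unitality and acyclicity to constrain the possible values of $s\cdot_Tx$, or alternatively first replace $(T,S)$ by a better-behaved representation of $P$ before applying the construction.
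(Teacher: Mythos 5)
Your construction coincides with the paper's own proof: the identical case-defined operation on $(T\setminus\{x\})\cup T'$ (well-defined by irreducibility, associativity checked by the same irreducibility/self-centeredness case analysis), the same generating set $(S\setminus\{x\})\cup U$, and the same arguments for fullness ($S''=T''$) and for the identity ($e_P$ when $e_P\neq x$, otherwise $e_Q$). The one subcase you leave unresolved --- $x\in\Min(P)$ with $x\notin S$ --- is exactly the point where the paper itself is cursory, since its check that no new comparabilities arise simply invokes ``$x\in S$'', which irreducibility only guarantees when $x\notin\Min(P)$ (or when $P$ is full); so your proposal matches the paper's argument, including its weakest spot.
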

\begin{proof}
 Let $P \cong P(T,S)$ and $Q \cong P(V,U)$. We will show that $P_xQ \cong P(T\cup V\setminus x, S\cup U\setminus x)$ where the new operation is defined as $$t\cdot t'=\begin{cases}
                                                                           tt'  & t,t'\in T \text{ or } t,t'\in V,\\
                                                                           xt'  & t\in V, t'\in T \text{ and } xt'\neq x,\\
                                                                           tx  & t'\in V, t\in T \text{ and } tx\neq x,\\
                                                                           t'  & t'\in V, t\in T \text{ and } tx=x,\\
                                                                           t  & t\in V, t'\in T \text{ and } xt'=x.\\
                                                                          \end{cases}$$
Since $x$ is irreducible, the operation is well defined.

We start by arguing, that the above operation is a semigroup. In words the above operation does the following: If the elements come both from the same set, i.e., either $T$ or $V$, then their composition is unchanged. If one comes from $T$ and one from $V$, then replace the one from $V$ with $x$, unless this results in $x$. In the latter case just replace the product by the element of $V$. So, let $t,t',t''\in V\cup T$. We want to show $(t\cdot t')\cdot t''=t\cdot (t'\cdot t'')$. If all three are in $V$ or all three are in $T$, we clearly have associativity. The remaining cases have to be considered individually.

\begin{case}\label{case:TTV}
 $t,t'\in T$ and $t''\in V$.
\end{case}
\begin{subcase}
 $tt'x=x$.
\end{subcase}
This implies $(t\cdot t')\cdot t''=t''$. On the other hand, irreducibility of $x$ and $t(t'x)=x$ implies $(t'x)=x$ and $t\cdot (t'\cdot t'')=t\cdot t''$. But since $tx=t(t'x)=x$, also $t\cdot t''=t''$.
\begin{subcase}
 $tt'x\neq x$.
\end{subcase}
This implies $(t\cdot t')\cdot t''=tt'x$. On the other hand, if $t'x\neq x$, then $t\cdot (t'\cdot t'')=t\cdot (t'x)=tt'x$. If $t'x=x$, then $tt'x\neq x$ implies $tx\neq x$ and $t\cdot t''=tx=tt'x$.

\begin{case}\label{case:VTT}
  $t,t''\in T$ and $t'\in V$.
\end{case}
\begin{subcase}\label{subcase:==}
 $tx=x$ and $xt''=x$.
\end{subcase}
We have $(t\cdot t')\cdot t''= t'\cdot t''=t'=t\cdot t'=t\cdot (t'\cdot t'')$.

\begin{subcase}\label{subcase:=neq}
 $tx=x$ and $xt''\neq x$.
\end{subcase}
We have $(t\cdot t')\cdot t''= t'\cdot t''=xt''=t(xt'')=t\cdot (t'\cdot t'')$.

\begin{subcase}
 $tx\neq x$ and $xt''= x$.
\end{subcase}
We have $(t\cdot t')\cdot t''= (tx)t'=tx==t\cdot t'=t\cdot (t'\cdot t'')$.

\begin{subcase}
 $tx\neq x$ and $xt''\neq x$.
\end{subcase}
We have $(t\cdot t')\cdot t''=txt''=t\cdot (t'\cdot t'')$.

\begin{case}
 $t\in V$ and $t',t''\in T$.
\end{case}
This case works analogous to Case~\ref{case:TTV}.

\begin{case}\label{case:TVV}
 $t\in T$ and $t',t''\in V$.
\end{case}
\begin{subcase}
 $tx=x$
\end{subcase}
We have $(t\cdot t')\cdot t''=t'\cdot t''=t\cdot (t'\cdot t'')$.

\begin{subcase}\label{subcase:txx}
 $tx\neq x$
\end{subcase}
Note that $txx=x$ implies $x^2=x$ by irreducibility of $x$ and hence $tx=x$, i.e., $tx\neq x\implies txx\neq x$ and we compute $(t\cdot t')\cdot t''=(tx)\cdot t''=tx=t\cdot (t'\cdot t'')$.

\begin{case}\label{case:VVT}
 $t,t''\in V$ and $t'\in T$.
\end{case}
\begin{subcase}
 $xt'=x$
\end{subcase}
Since $x$ is self-centered, we have $t'x=x$. Thus, we compute $(t\cdot t')\cdot t''=t\cdot t''=t\cdot (t'\cdot t'')$.

\begin{subcase}
 $xt'\neq x$
\end{subcase}
Again, since $x$ is self-centered, we have $t'x\neq x$. Note furthermore that by $x$ being self-centered $xt'x=x$ implies $t'xx=x=xxt'$ and as argued in Case~\ref{subcase:txx} this yields $t'x=x=xt'$. Thus, in the present case we have $xt'x\neq x$ and we compute $(t\cdot t')\cdot t''=(xt')\cdot t''=xt'x=t\cdot(t'x)=t\cdot (t'\cdot t'')$.

\begin{case}
 $t,t'\in V$ and $t''\in T$.
\end{case}
This case works analogous to Case~\ref{case:TVV}.

\bigskip

Let us now see, that all order relations of $P_xQ$ are realized by the new operation. Let $y<z$ be two elements of $P_xQ$. If both $y,z\in T$ or both $y,z\in V$, then clearly this is the case. If $y\in T$ and $z\in V$, then $y<x$ in $P$, so $ys=x$ for some $s\in S$. Since $y\neq x$ and $x$ is irreducible, this implies that $x=s$, i.e., $yx=x$. Thus, $y\cdot z=z$ after the fourth part of ``$\cdot$''. Since $x\notin \Min(P)$, we have that $Q$ is full, i.e., $z\in U$ and $y<z$.
If $y\in U$ and $z\in T$, then there is an $s\in S$ so that $xs=z$ in $P$ and we have $ys=z$ by the second part of ``$\cdot$''.

Now we show that no new comparabilities can arise. Let $t\in T\cup V\setminus\{x\}$ and $s\in S\cup U\setminus\{x\}$ and $t'=t\cdot s$. We have to show that $t\leq t'$ in $P_xQ$. This clearly holds if $t\in T$ and $s\in S$ or $t\in V$ and $s\in U$. 

If $t\in T$ and $s\in U$, then $t'=tx$ if $tx\neq x$. Since $x\in S$, this yields $t<t'$ in $P_xQ$. If $tx=x$, then $t'=s$, but again since $x\in S$ by $tx=x$ we have $t<x$ in $P$ and thus $t<s$ in $P_xQ$.

If $t\in V$ and $s\in S$, then $t'=xs$ if $xs\neq x$ by the second part of ``$\cdot$'', i.e., $x<t'$ in $P$ and thus $t<t'$ in $P_xQ$.
If $xs=x$, then $t'=t$ by the fifth part of ``$\cdot$'' and we clearly have $t\leq t'$ in $P_xQ$.

It remains to show the last part of the proposition.
Using the the characterization theorems from Section~\ref{sec:characterization} we get the following: Since $x\in \Min(P)$ or $Q$ is full (and $x\in S$), we have that $S\cup U$ is an upset of $P_xQ$. Thus, if both are full, then so is $P_xQ$. Finally, if $P$ is a monoid and $x\neq e_P$, where $e_P$ is the neutral element of $P$, then in the new operation $e_P$ is the neutral element for $P_xQ$.
However, if $x=e_P$, but $Q$ is a monoid with neutral element $e_Q$, then $e_Q$ is the neutral element for $P_xQ$.
\end{proof}

Note that if $x,y$ are irreducible and self-centered in $P$, then $y$ is irreducible and self-centered in $P_xQ$ and; moreover, the operations defined following in
Theorem \ref{thm:blowup} in $(P_xQ)_yR$ and $(P_yR)_xQ$ coincide. In particular, one can easily come up with an operation defined for the blowup of a (possibly infinite) set of elements of $P$, each replaced by a different $Q$. We will skip the proof of this result, since it is similar to the one of Theorem~\ref{thm:blowup}. An interesting consequence of Theorem~\ref{thm:blowup} is when considering the \emph{modular decomposition} of a poset $P$, i.e., a \emph{module} of $P$ is an induced subposet $Q$ such that all elements of $Q$ behave in the same way with respect to elements of $P\setminus Q$. This yields a recursive decomposition of $P$. Theorem~\ref{thm:blowup} can be used to formulate minimality of non-Cayley posets with respect to modular decompositions. In particular, it lead us to consider the posets in the beginning of Section~\ref{sec:firstseparations} since they are \emph{prime} with respect to modular decomposition and series-parallel posets in Corollary~\ref{cor:seriesparallel}, because their modular decompositions are as simple as possible.

Given a join semilattice, one obtains an \emph{antichain-blowup of a semilattice} by replacing some of its join-irreducible elements by antichains. See Figure~\ref{fig:blowup} for an example of this construction.

\begin{figure}[h]
\centering
\includegraphics[height=.4\textwidth]{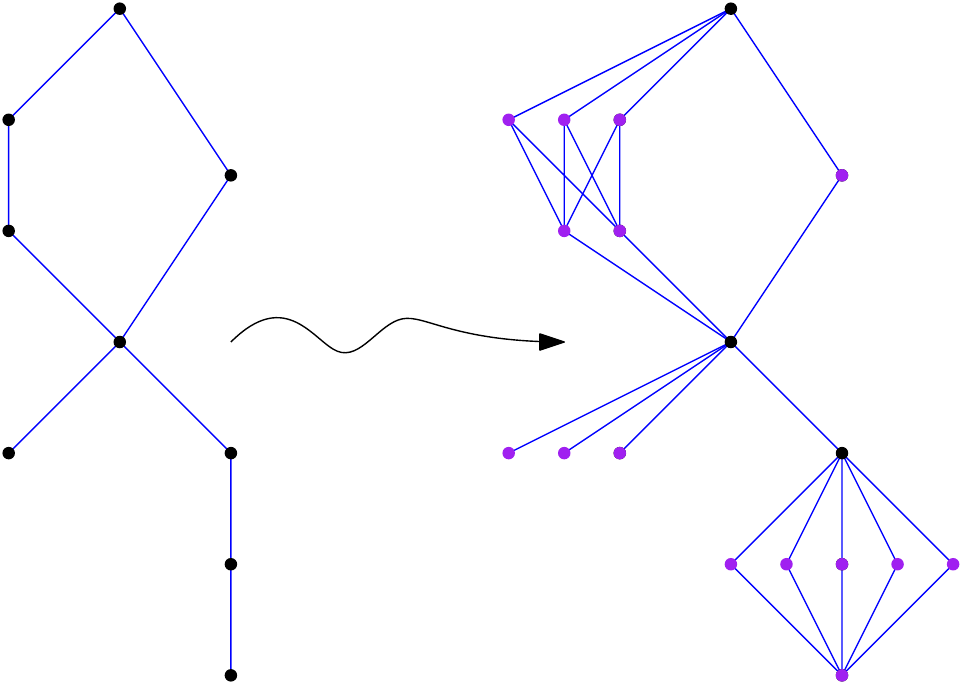}
\caption{An antichain-blowup of a join-semilattice}
\label{fig:blowup}
\end{figure}

Since antichains can be easily realized as full posets and join-semilattices are full (with the join operation), we obtain the following result as
a special case of Theorem~\ref{thm:blowup}. 
\begin{corollary}
 Antichain-blowups of join-semilattices are full.
\end{corollary}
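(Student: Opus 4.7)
The plan is to invoke Theorem~\ref{thm:blowup} directly, with the role of $P$ played by the join-semilattice $L$ and the role of $Q$ played by an antichain, applied at each join-irreducible element that is to be blown up. Since the theorem guarantees that fullness is preserved when both $P$ and $Q$ are full, the only things left to check are: (i)~every join-semilattice is full, (ii)~every antichain is full, and (iii)~every join-irreducible element of a join-semilattice is irreducible and self-centered in the sense required by Theorem~\ref{thm:blowup}.

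For (i), I would realize the join-semilattice $(L,\vee)$ as the full semigroup poset $P(L,L)$: the operation is the join, and for $x,y\in L$ one has $x\leq y$ iff $x\vee y=y$, so there exists $s\in L$ (namely $s=y$) with $xs=y$ precisely when $x\leq y$. For (ii), any antichain $A$ can be equipped with the left-zero semigroup structure $x\cdot y:=x$; then $xs=x$ for every $s$, which is s-unital and trivially acyclic, and $x\leq_A y$ iff $y=x$, giving exactly the antichain as $P(A,A)$.

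For (iii), I would observe that join-irreducibility of $x\in L$ (meaning $x=a\vee b\Rightarrow x\in\{a,b\}$) is by definition irreducibility of $x$ in the semigroup $(L,\vee)$. Self-centeredness, namely $yx=x\iff xy=x$, is automatic since $\vee$ is commutative. Hence the hypotheses of Theorem~\ref{thm:blowup} are satisfied whenever we blow up a join-irreducible element of $L$ by an antichain.

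Finally, to handle the simultaneous replacement of several join-irreducible elements by (possibly different) antichains, I would appeal to the remark following Theorem~\ref{thm:blowup}, which extends the blowup construction to an arbitrary set of pairwise blown-up elements, each irreducible and self-centered. Equivalently, one may argue by induction: after one blowup the resulting poset is again full, and each remaining join-irreducible element of the original $L$ is still irreducible and self-centered in the new semigroup (since the newly defined operation agrees with $\vee$ on the part coming from $L$), so we may iterate. The main conceptual point to watch, rather than a genuine obstacle, is the compatibility of the iterated/simultaneous construction, which is already handled by the commentary accompanying Theorem~\ref{thm:blowup}; no calculation beyond items (i)--(iii) should be necessary.
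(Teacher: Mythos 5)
Your proposal is correct and follows essentially the same route as the paper: the paper also obtains the corollary as a direct special case of Theorem~\ref{thm:blowup}, using that a join-semilattice is full via the join operation, that antichains are easily realized as full semigroup posets, and (implicitly) that join-irreducible elements are irreducible and self-centered for $\vee$, with simultaneous blowups covered by the remark following the theorem. Your explicit left-zero realization of antichains and the check of items (i)--(iii) simply spell out details the paper leaves to the reader.
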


Given two posets $P,Q$ their \emph{series composition} $P*Q$ is the poset obtained by taking the disjoint union of $P$ and $Q$ and making all elements of $P$ inferior to all elements of $Q$. The \emph{parallel composition} of $P+Q$ of $P$ and $Q$ is obtained by taking the disjoint union of $P$ and $Q$ and not adding any further comparabilities.

\begin{proposition}\label{prop:seriesparallel}
 Let $P, Q$ be semigroup posets, then $P+Q$ is a semigroup poset. If $P,Q$ are both full, then so is $P+Q$. 
 If $P \cong P(N,M)$ and $Q \cong P(N',M')$ are monoid posets and there is a monoid homomorphism $\sigma:N\to N'$ with $\sigma(M)=M'$, then $P+Q$ is a monoid poset.
 
 If $Q$ is full, then $P*Q$ is a semigroup poset. If additionally $P$ is full or a monoid poset, then so is $P*Q$.
  
\end{proposition}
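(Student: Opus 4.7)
My plan is to treat each claim by producing an explicit semigroup (or monoid) structure on $U := T \sqcup T'$, where $P \cong P(T,S)$ and $Q \cong P(T',S')$, together with a subsemigroup whose Cayley poset realizes the intended composition.

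For $P*Q$ with $Q$ full, I keep the original operations within $T$ and within $T'$ and define cross products by
\[
a\star b \;=\; \begin{cases} b & a\in T,\ b\in T',\\ a & a\in T',\ b\in T.\end{cases}
\]
A short case analysis verifies associativity: in every triple the cross rules consistently propagate the ``dominant'' part (either $T'$ from above or $T$ from below). With $\tilde S := S\cup S'$ (which is closed under $\star$), the Cayley poset $P(U,\tilde S)$ restricts to $P$ on $T$ and to $Q$ on $T'$; fullness of $Q$ (so $S'=T'$) combined with $t\star t' = t'$ yields $t\leq t'$ for all $t\in T,\, t'\in T'$, which are exactly the series relations. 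If $P$ is also full, then $\tilde S = U$. If $P$ is a monoid with identity $e$, then $e\star b = b = b\star e$ for every $b\in T'$ by the cross rules, so $e$ is the identity of $U$, and $\tilde S = M\cup T'$ equals $\upa e$; Theorem~\ref{thm:characterizationmono} then gives the monoid-poset conclusion.

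For $P+Q$ in the monoid case with homomorphism $\sigma:N\to N'$ satisfying $\sigma(M)=M'$, define on $U := N\sqcup N'$ the operations within each component as before, and cross products
\[
a\star b \;=\; \begin{cases} \sigma(a)\cdot b & a\in N,\ b\in N',\\ a\cdot\sigma(b) & a\in N',\ b\in N.\end{cases}
\]
The homomorphism property of $\sigma$ makes $\star$ associative, and $\sigma(e_N)=e_{N'}$ shows $e_N$ is the identity of $U$. Taking $V:=M\subseteq N$, for $v\in V$ the product $a\star v$ stays in the component of $a$: equal to $a\cdot v\in N$ when $a\in N$, and to $a\cdot\sigma(v)\in N'$ when $a\in N'$. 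Using $\sigma(M)=M'$, the action of $V$ on $N'$ recovers the $Q$-order while on $N$ it recovers the $P$-order, and no cross-comparabilities appear. Hence $P+Q \cong P(U,M)$ is a monoid poset.

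For the general semigroup case of $P+Q$, I would invoke Theorem~\ref{thm:characterizationsemi}. Let $(L_P,F_P)$ and $(L_Q,F_Q)$ be witnesses for $P$ and $Q$. Set $F := F_P\cup F_Q$, an upset of $P+Q$ (since there are no cross-comparabilities), and let $L$ consist of ``block'' endomorphisms $\eta_a$ together with constants $\kappa_a$: for $a\in P$, $\eta_a$ acts as $\varphi_a^P$ on $P$ and as the constant $a$ on $Q$; symmetrically for $a\in Q$. A direct verification shows $L$ is closed under composition ($\eta$'s compose within the same component, and collapse to constants when they cross between components). The delicate point is uniqueness at maximal elements, where both $\eta_z$ and $\kappa_z$ send $F$ to $\{z\}$; one resolves this by choosing witnesses where $\varphi_z$ is already the constant $z$ at every maximal $z$ (automatic when $P$ or $Q$ is full, since maxima are then left-absorbing, and arrangeable in general by enlarging the witnessing data). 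The main obstacle is precisely this delicate case: the general semigroup setting requires either careful endomorphism bookkeeping via the characterization or a reduction to the monoid case using full-monoid extensions (Observation~\ref{obs:min}) followed by an appropriate restriction to the subsemigroup $T\sqcup T'$ inside the constructed monoid.
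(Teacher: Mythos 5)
Your constructions for the series composition and for the monoid case of the parallel composition are correct and essentially identical to the paper's proof: the same cross-product rules (the $Q$-side element absorbs for $P*Q$; products are transported into $N'$ via $\sigma$ for $P+Q$) and the same witnessing subsemigroup ($S\cup S'$, respectively the submonoid $M$), so those parts need no further comment.

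The gap is the very first claim -- that $P+Q$ is a semigroup poset for \emph{arbitrary} semigroup posets $P,Q$, and full when both are full -- which your proposal does not establish. The uniqueness clash you flag is real: closure of $L$ under composition forces all constants into $L$ (e.g.\ $\eta_a\circ\eta_b=\kappa_a$ for $a\in P$, $b\in Q$), and for a maximal $z$ both $\eta_z$ and $\kappa_z$ send $F$ onto $\upa z=\{z\}$; but ``enlarging the witnessing data'' and ``reduction to the monoid case followed by an appropriate restriction'' are hopes, not arguments, and neither is carried out. Moreover the obstacle is deeper than uniqueness at maxima: the proof of Theorem~\ref{thm:characterizationsemi} needs every composite $\varphi_x\circ\varphi_y$ to again be a witness map $\varphi_w$, and your cross composites are constants, which are witness maps only at maximal elements -- so even for posets without maximal elements your $L$ does not feed into that theorem as proved. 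If one nevertheless extracts the induced operation $x\cdot y:=\varphi_x(y)$ from your witnesses, its cross rule is ``the left factor wins'' ($tt'=t$, $t't=t'$ on $T\sqcup T'$ with subsemigroup $S\cup S'$), which is exactly the direct construction the paper gives for this case; the entire content of the claim is then the associativity of that operation, and this is precisely where care is needed, since for $t_1,t_2\in T$ and $t'\in T'$ one computes $(t_1t')t_2=t_1t_2$ while $t_1(t't_2)=t_1$, so the naive rule is not associative unless the component multiplications are degenerate. In short: your instinct that this is the delicate case is right, but as submitted the first (and the both-full) assertion of the proposition remains unproven; you must either verify associativity of a suitably modified cross product directly, or produce genuinely different witnessing endomorphisms satisfying the hypotheses of Theorem~\ref{thm:characterizationsemi} in the exact form its proof uses.
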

\begin{proof}
 Let $P \cong P(T,S)$ and $Q \cong P(T',S')$. Now consider $P(T\cup T',S\cup S')$ where for $t\in T$ and $t'\in T'$ we define $tt'=t$ and $t't=t'$. It is easy to see, that this gives a semigroup generating $P+Q$. Clearly, if both $P,Q$ are full, then so is $P+Q$.
 
 Let $P \cong P(N,M)$ and $Q \cong P(N',M')$ monoids and $\sigma:N\to N'$ a monoid homomorphism $\sigma:N\to N'$ with $\sigma(M)=M'$. We show that $P+Q \cong P(N\cup N', M)$ with respect to the operation defined as $$t\cdot t'=\begin{cases}
                                                                           tt'  & t,t'\in N \text{ or } t,t'\in N',\\
                                                                           t\sigma(t')  & t\in N', t'\in N \text{ and } xt'\neq x,\\
                                                                           \sigma(t)t'                                     & t\in N, t'\in N'\\
                                                                           
                                                                          \end{cases}$$
 Since $\sigma$ is a monoid homomorphism it follows easily that ``$\cdot$'' is a monoid. Moreover, the order relation is easily seen to be the right one since $\sigma(M)=M'$ and we only take $M$ as submonoid generating the order.

 Let $P \cong P(T,S)$ and $Q \cong P(S',S')$. Now consider $P(T\cup S',S\cup S')$ where for $t\in T$ and $t'\in S'$ we define $tt'=t't=t'$. It is easy to see, that this gives a semigroup generating $P*Q$. Clearly, if $P$ is full or monoid, then so is $P*Q$.
\end{proof}

A poset is called \emph{series-parallel} if it can be constructed from a singleton by series and parallel compositions. 
Hence, a particular consequence of Proposition~\ref{prop:seriesparallel} is:
\begin{corollary}\label{cor:seriesparallel}
 Series parallel posets are full.
\end{corollary}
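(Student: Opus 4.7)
The plan is to proceed by structural induction on the recursive construction of a series-parallel poset, using Proposition~\ref{prop:seriesparallel} as the engine driving the inductive step. Since the class of series-parallel posets is generated from singletons by series and parallel compositions, this fits perfectly with the two closure statements in the proposition.

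First I would handle the base case: a singleton poset $\{x\}$ is full, as witnessed by the trivial semigroup $S=\{x\}$ with $x\cdot x = x$, giving $P(S,S)\cong \{x\}$. So the singleton is in fact full (and even a full monoid poset, though we only need fullness here).

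For the inductive step, suppose $P$ and $Q$ are series-parallel posets that are already known to be full. There are two cases according to which operation is applied last in the decomposition. If the last operation is parallel composition, then $P+Q$ is full directly by the second sentence of Proposition~\ref{prop:seriesparallel}, which states that $P+Q$ is full whenever both $P$ and $Q$ are. If the last operation is series composition, then $P*Q$ is full by the last sentence of Proposition~\ref{prop:seriesparallel}: the hypothesis there requires $Q$ to be full (to realize the semigroup on the ``top'' part) and $P$ to be full as well (to transfer fullness upward), both of which are given by the induction hypothesis. The conclusion of the corollary then follows.

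The main obstacle, if any, is merely bookkeeping: one must verify that the preconditions of Proposition~\ref{prop:seriesparallel} are in fact met in both inductive cases, which is immediate since the induction hypothesis gives fullness of both summands/factors. There is no additional construction to perform, since all the semigroup-theoretic work has already been carried out in the proof of Proposition~\ref{prop:seriesparallel}.
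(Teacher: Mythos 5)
Your proof is correct and is exactly the argument the paper intends: the corollary is stated as an immediate consequence of Proposition~\ref{prop:seriesparallel}, and your structural induction (singleton is full; parallel and series compositions of full posets are full by the proposition) is the standard way to spell this out. No gaps.
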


For a poset $P$ and a positive integer $k$, we denote by $kP$, the poset in which every element of $P$ is replaced by an antichain of size $k$, as in the blow-up operation above.

\begin{proposition}\label{prop:blowupmono}
 If $P$ is a monoid poset $P(N,M)$ for monoids $M<N$, where $nm=n\Rightarrow m=e$ for all $n\in N, m\in M$, then for any $k\geq 1$ the poset $kP$ is a monoid poset.
\end{proposition}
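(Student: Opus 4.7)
The plan is to construct a monoid $N'$ and a submonoid $M'<N'$ such that $P(N',M')\cong kP$. The natural candidate for $N'$ is $N\times\Z_k$ with componentwise operation (additive on the second coordinate), whose identity is $(e,0)$. The delicate choice is $M'$: taking $M\times\{0\}$ would yield $k$ disjoint copies of $P$, while $M\times\Z_k$ would collapse each antichain into a single comparable class. The right choice is
$$M':=\bigl((M\setminus\{e\})\times\Z_k\bigr)\cup\{(e,0)\},$$
so that multiplication by $(e,0)$ preserves the $\Z_k$-coordinate, while multiplication by any $(m,l)$ with $m\neq e$ is free to shift it arbitrarily.

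The first step is to check that $M'$ is a submonoid of $N'$. It contains the identity $(e,0)$, so the issue is closure: a product $(m,l)(m',l')=(mm',l+l')$ could escape $M'$ only when $mm'=e$ but $l+l'\neq 0$. Hence the key point is that $M$ contains no nontrivial units, i.e., $mm'=e$ with $m,m'\in M$ forces $m=m'=e$. This follows from acyclicity of $P(N,M)$ (Proposition~\ref{prop:sunital}): applying the acyclicity condition $x=xss'\Rightarrow x=xs$ with $x=e$, $s=m$, $s'=m'$, the hypothesis $mm'=e$ makes the premise trivial, and the conclusion yields $e=es=m$.

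The second step is to verify $P(N',M')\cong kP$. Writing $(n,i)\leq (n',j)$ for the relation in $P(N',M')$, this holds iff there exists $(m,l)\in M'$ with $nm=n'$ and $i+l=j$. If $(m,l)=(e,0)$ we get the reflexive case $n=n'$ and $i=j$. Otherwise $m\in M\setminus\{e\}$, and the hypothesis $nm=n\Rightarrow m=e$ forces $nm\neq n$, so $n<_{P}n'$; moreover, $l=j-i$ is unrestricted, so $j$ is arbitrary. Thus $(n,i)\leq (n',j)$ in $P(N',M')$ exactly when either $(n,i)=(n',j)$ or $n<_{P}n'$, which is precisely the order on $kP$.

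The main obstacle is proving closure of $M'$, which hinges on extracting ``$M$ has no nontrivial units'' from the poset axioms on $P(N,M)$; once this observation is in hand, the rest of the verification (monoid structure, that $M'$ is indeed a submonoid, and the comparison of orders) reduces to routine componentwise bookkeeping.
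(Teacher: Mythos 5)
Your construction is exactly the one in the paper's proof, which simply exhibits $N'=N\times\Z_k$ and $M'=((M\setminus\{e\})\times\Z_k)\cup\{(e,0)\}$ and asserts the isomorphism; your verification of closure of $M'$ via acyclicity and of the order comparison via the hypothesis $nm=n\Rightarrow m=e$ correctly fills in the details the paper leaves implicit. The proposal is correct and takes essentially the same approach.
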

\begin{proof}
 Set $N'=N\times\mathbb{Z}_k$ and $M'=((M\setminus\{e\})\times\mathbb{Z}_k)\cup\{(e,0)\}$. Then $P\cong P(N',M')$.
\end{proof}

Another operation that we have decided not to describe in detail here is the one of taking semigroup quotients and how this affects the poset structure. Some operations of this type will be used in Section~\ref{sec:autoequivalent}, though.

%

\section{Weak orders}\label{sec:examples}

A poset $P$ is called a \emph{weak order} if it is a (possibly infinite) chain of antichains called \emph{levels} $(\ldots, A_i,A_{i+1},\ldots)$ such that $x< y\iff x\in A_i$ and $y\in A_j$ with $i<j$.

\begin{figure}[h]
\centering
\includegraphics[height=.4\textwidth]{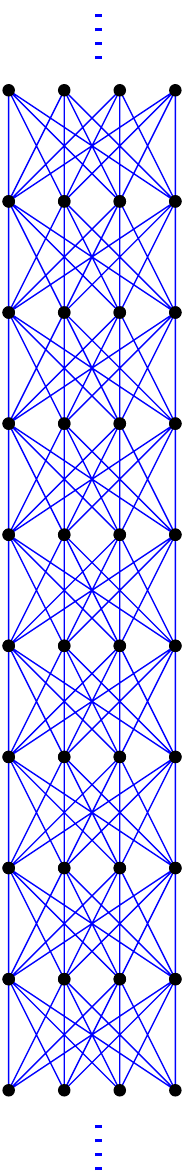}
\caption{The bi-infinite weak order with levels of size $4$.}
\label{fig:weak}
\end{figure}

Since any chain is a monoid poset but also a full poset and antichains are full posets, as applications of Observation~\ref{obs:min}, Theorem~\ref{thm:blowup}, and Proposition~\ref{prop:blowupmono} is the following for which we however give an independent proof:
\begin{proposition}\label{prop:weak}
Any weak order $P$ is a full semigroup poset. If $P$ has a minimum or all levels are of the same size, then $P$ is a monoid poset
\end{proposition}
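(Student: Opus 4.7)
The plan is to build an explicit semigroup $S$ on the underlying set of $P$ realizing the weak order as a full Cayley poset, and then to upgrade $S$ to a monoid under each of the two special hypotheses. Write $P = \bigsqcup_{c \in C} A_c$, where $C$ is the chain of levels of the weak order, and let $\ell \colon P \to C$ denote the level map. On the set $P$ I will use the binary operation
\[
x \cdot y := \begin{cases} y & \text{if } \ell(x) < \ell(y), \\ x & \text{otherwise.} \end{cases}
\]
Associativity is checked by a short case distinction on the relative order of $\ell(x), \ell(y), \ell(z)$: in every case both $(x \cdot y) \cdot z$ and $x \cdot (y \cdot z)$ reduce to the argument of largest level, ties being broken to the leftmost occurrence. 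A direct computation then gives $\{x \cdot s : s \in P\} = \{x\} \cup \{y \in P : \ell(y) > \ell(x)\} = \upa x$ in the weak order, so $P \cong P(S, S)$ with $S = (P, \cdot)$, establishing that $P$ is a full semigroup poset.

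If $P$ has a global minimum $e$, I would observe that two distinct elements at the same level of a weak order are incomparable, so $e$ must be the unique element of its level. Hence for every $x \in P$ either $\ell(e) < \ell(x)$ or $x = e$, and plugging this into the definition of $\cdot$ yields $e \cdot x = x = x \cdot e$. Thus $e$ is a two-sided identity, $S$ is a monoid, and $P \cong P(S, S)$ is even a full monoid poset.

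For the case where all levels have common cardinality $\kappa$, the plan is to mimic the antichain blow-up construction of Proposition~\ref{prop:blowupmono}. Pick a group $G$ with $|G| = \kappa$ and identify each $A_c$ with $G$, so that $P = C \times G$ as a set. Express the chain $C$ itself as a monoid poset $C \cong P(D, U)$ satisfying the cancellation hypothesis $du = d \Rightarrow u = e_D$ (routine for the familiar chains $\N, \Z, \Q, \R$ via their additive structures, and more generally via an order-embedding of $C$ into an ordered abelian group). Then put $N := D \times G$ with coordinate-wise multiplication and $M := ((U \setminus \{e_D\}) \times G) \cup \{(e_D, e_G)\}$; the proof of Proposition~\ref{prop:blowupmono}, with $G$ in place of $\mathbb{Z}_k$, then shows that $P \cong P(N, M)$.

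The main obstacle lies in the last step: reducing to Proposition~\ref{prop:blowupmono} requires the chain $C$ itself to be presentable as a monoid poset with the cancellation condition. I would verify this for the chains of interest and, if a fully general argument is needed, carry out the blow-up construction directly on $C \times G$ — essentially repeating the proof of Proposition~\ref{prop:blowupmono} in situ, without any appeal to $C$ being a monoid poset in isolation.
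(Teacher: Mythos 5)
Your first two steps are correct. The left-biased operation ($x\cdot y=y$ if $\ell(x)<\ell(y)$ and $x\cdot y=x$ otherwise) is associative and satisfies $x\cdot P=\upa x$, so it realizes $P$ as a full semigroup poset; this is a minor variant of the paper's operation, which instead breaks ties within a level by taking a maximum of position indices. In the minimum case your direct check that the minimum is a two-sided identity is fine (the paper simply invokes Theorem~\ref{thm:characterizationfullmono}), and your insistence on a \emph{group} $G$ in the second coordinate for the equal-levels case is the right instinct, since left translations there must be surjective.

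The genuine gap is in the equal-levels case, exactly where you flag an ``obstacle''. Your reduction to Proposition~\ref{prop:blowupmono} is impossible whenever the chain $C$ of levels has a maximum element (so for every finite weak order with at least two levels, and for every weak order with a top layer): if $C\cong P(D,U)$ and $m$ is the maximum of $C$, then $mU=\upa m=\{m\}$, so the hypothesis $du=d\Rightarrow u=e_D$ forces $U=\{e_D\}$, and then $P(D,U)$ is an antichain, not a chain. Your list of ``chains of interest'' ($\N$, $\Z$, $\Q$, $\R$) omits precisely these cases, and the proposed fallback of running the $D\times G$ construction ``in situ'' fails as well: with a truncated chain in the first coordinate and a group $G$ in the second, any two elements $(m,g)$, $(m,g')$ of the top level become comparable (choose $u\neq e_D$ with $mu=m$ and $h$ with $gh=g'$), destroying antisymmetry. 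Already for two levels of size two (the $K_{2,2}$-poset) no product-with-a-group construction can work, and one has to build a structurally different monoid, e.g.\ one in which bottom-level elements act as left identities and top-level elements as left zeros. So as written your argument only establishes the equal-levels claim when $C$ has no maximum ($C\cong\N$ or $\Z$). To be fair, the paper's own one-line treatment of the maximum-layer case (``addition followed by taking a maximum'', with ``any monoid'' of size $k$) suffers from the same top-level defect, and for infinitely many levels below a maximum layer the claim even clashes with the cardinality obstruction used later in Section~\ref{sec:examples} (the submonoid $\upa e$ is finite while $|\upa x|$ is unbounded); so this portion of the statement requires more care than either your proposal or the paper provides, but in any case your proof does not cover any chain of levels possessing a top element.
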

\begin{proof}
 To see the first part of the claim just identify every element of $P$ with the tuple $(i,j)$, where $i$ is the index of its level and $j$ its positions within the level. We define $(i,j)\cdot(k,\ell)$ by $(\max(i,k),m)$, { where $m=j$ if $i > k$, $m=\ell$ if $i < k$, and $m= {\rm max}(j,\ell)$ otherwise}. It is straightforward to check that $\cdot$ is associative. Thus, $P$ is full.
 Clearly, by Theorem~\ref{thm:characterizationfullmono} if $P$ has a minimum, then it is a monoid poset. 
 
 If all levels of $P$ are of the same size $k$, then let $M$ be any monoid of size $k$ with neutral element $e$. We can represent $P$ as $P(\mathbb{Z}\times M, \mathbb{N}\setminus\{0\}\times M\cup \{(0,e)\})$, if $P$ is bi-infinite. Otherwise, we can represent a chain with minimum by $\mathbb{N}$ instead of $\mathbb{Z}$. If there is a maximum layer, then we replace the operation in the first component by addition followed by taking a maximum.
\end{proof}

In the rest of the section we want to study if further weak orders apart from those mentioned in Proposition~\ref{prop:weak} can be monoid posets. For this we establish a couple of more general necessary conditions on monoid posets. 

\begin{lemma}\label{lem:numbercomparable}
 Let $P$ be a monoid poset and let $x \in P$ such that $x \leq e$ . There is an injective order homomorphism from 
$P \setminus (\upa x\cup \downa x)$ to $P \setminus (\upa e\cup \downa e)$
\end{lemma}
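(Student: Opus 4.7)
The plan is to realize $P$ as a monoid poset $P(N,M)$ and use left multiplication by a distinguished element of $M$ coming from the hypothesis $x\leq e$.

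First, I would fix an isomorphism $P \cong P(N,M)$ for monoids $M<N$ with neutral element $e$, so that $\upa e = M$. The hypothesis $x\leq e$ yields an element $x'\in M$ with $xx'=e$. The map I would propose is $\phi: P \to P$ defined by $\phi(y) := x'y$, i.e., left multiplication by $x'$.

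Order-preservation is immediate from Lemma~\ref{lem:basic} (left multiplication is always an endomorphism). Injectivity comes directly from the relation $xx'=e$: if $x'y=x'y'$, then applying $x$ on the left gives $y = xx'y = xx'y' = y'$. So $\phi$ is an injective order homomorphism from $P$ to itself, and it remains to check that its restriction to $P \setminus (\upa x\cup \downa x)$ lands in $P \setminus (\upa e\cup \downa e)$.

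I would verify the latter by the contrapositive, splitting into two symmetric computations. If $\phi(y) \in \upa e = M$, write $x'y = m$ with $m\in M$; multiplying on the left by $x$ yields $y = xx'y = xm$, which shows $y\in \upa x$. If instead $\phi(y)\in \downa e$, there is $s\in M$ with $x'ys = e$; left-multiplying by $x$ yields $x = xe = xx'ys = ys$, hence $y\in \downa x$. Both implications rely on the single identity $xx'=e$, and together they give the required containment of images. I do not anticipate any real obstacle: once one notices that the right inverse $x'$ of $x$ in $M$ doubles as a left-cancellation witness via $x$, all three required properties (order-preservation, injectivity, and the image condition) drop out of the same one-line calculation.
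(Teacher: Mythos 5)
Your proposal is correct and follows essentially the same route as the paper: both use left multiplication by an element $x'\in M$ with $xx'=e$ (obtained from $x\leq e$), prove injectivity by cancelling with $x$ on the left, and show the image avoids $\upa e\cup\downa e$ via the same identity. The only cosmetic difference is that you unfold the definition of $\leq_M$ directly (writing $x'y=m$ or $x'ys=e$) where the paper invokes that $\varphi_x$ is an order endomorphism; both verifications are equivalent.
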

\begin{proof}We will use that by Theorem~\ref{thm:characterizationmono} left-multiplication by $x$ coincides with the order endomorphism $\varphi_x$. Since $x \leq e$ and
$\varphi_x: \upa e \rightarrow \upa x$ is onto, there exists $x' \in \upa e$ such that $xx' = e$. Let  $y,z\in P \setminus (\upa x\cup \downa x)$ be different elements. Since $\varphi_x \in \End(P)$, we have that 
\begin{itemize}
\item $x'y$ is not comparable with $e$. Indeed, if $x' y \leq e$ (or $x'y \geq e$), then $y = x x' y = \varphi_x(x' y) \leq \varphi_x(e) = x$ (or
$y = x x' y = \varphi_x(x' y) \leq \varphi_x(e) = x$),  which contradicts $y\parallel x$; and,
\item  $x'y \neq x'z$ for all $i \neq j$; otherwise $y = \varphi_x(x'y) = \varphi_x(x'z) = z$. 
\end{itemize}
This proves the result.
\end{proof}

%
%
%

\begin{lemma}\label{lem:upwards}
 Let $P$ be a monoid poset, $x\in P$, and $A\subseteq \upa x$ an antichain. If $\varphi_x^{-1}(A):=B$ is a maximal antichain of $\upa e$, then $\varphi_x:\downa B ~ \cap\upa e \to \downa A~ \cap \upa x$ is onto. 
\end{lemma}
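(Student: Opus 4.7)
The plan is to exploit Theorem~\ref{thm:characterizationmono}, which tells us that $\varphi_x$ is an order endomorphism of $P$ with $\varphi_x(\upa e)=\upa x$. In particular $\varphi_x\colon \upa e\to \upa x$ is surjective, so for any given $y\in \downa A\cap \upa x$ I can pick some preimage $z_0\in \upa e$ with $\varphi_x(z_0)=y$. The goal is to replace $z_0$, if necessary, by an element $z\in \upa e$ that still maps to $y$ and additionally lies below some element of $B$.

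To make this replacement I will use the maximality of $B$ inside $\upa e$: by definition of a maximal antichain, $z_0$ must be comparable to at least one $b\in B$. I then split into two cases according to the direction of comparability. If $z_0\leq b$, we are done immediately, since $z_0$ itself lies in $\downa B\cap \upa e$ and maps to $y$. If instead $z_0\geq b$, monotonicity of $\varphi_x$ yields $y=\varphi_x(z_0)\geq \varphi_x(b)$, while the hypothesis $y\in \downa A$ gives some $a\in A$ with $y\leq a$. Because $b\in B=\varphi_x^{-1}(A)$, we have $\varphi_x(b)\in A$, so the chain $\varphi_x(b)\leq y\leq a$ has both endpoints in the antichain $A$; this forces $\varphi_x(b)=a=y$, and I can take $z=b$, which sits in $\downa B\cap \upa e$ trivially.

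The only slightly delicate point is the second case, where one has to use the antichain property of $A$ to conclude that $y$ itself belongs to $A$ so that $b$ is a valid lift. Everything else is formal from the definitions of principal upsets, maximal antichain, and Theorem~\ref{thm:characterizationmono}, so I do not foresee any significant obstacle. As a sanity check on the codomain: since $\varphi_x$ is order-preserving and $\varphi_x(B)=A$, one automatically has $\varphi_x(\downa B\cap \upa e)\subseteq \downa A\cap \upa x$, so the statement only requires the surjectivity argument above.
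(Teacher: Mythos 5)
Your proof is correct and follows essentially the same route as the paper's: take a preimage of the target under the onto map $\varphi_x\colon \upa e \to \upa x$, use maximality of $B$ to get comparability with some $b\in B$, and in the case $b\leq$ preimage use the antichain property of $A$ to force equality. The only cosmetic difference is that in that case you take $b$ itself as the lift, while the paper observes that the original preimage lies in $B$; both are valid.
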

\begin{proof}

Let us show that if $y\in \downa B \cap \upa e$, then $\varphi_x(y) \in \downa A \cap \upa x$.
Since $y\in \downa B \cap \upa e$, there is $b \in B$ with $y\leqslant b$. Since $\varphi_x\in \End(P)$ this implies $\varphi_x(y) \leqslant \varphi_x(b)\in A$. Thus, $\varphi_x(y) \in \downa A$.
Moreover since $x\leqslant e$ using Theorem~\ref{thm:characterizationmono} we get $\varphi_x : \upa e \rightarrow \upa x$. More precisely, since $y \in \upa e$ we have $ \varphi_x(y)\in \upa x$ and $\varphi_x(y) \in \downa A \cap \upa x$.

It remains to show that $\varphi_x:\downa B\cap\upa e \to \downa A\cap\upa x$ is surjective.
Let $z\in \downa A \cap \upa x$. Since $ \varphi_x :\upa e \hookrightarrow \upa x$ is onto, there exists $w \in \upa e$ with $\varphi_x(w) = z$. If there is $b\in B$ with $w\leq b$ we are done. Otherwise, since $B$ is a maximal antichain there is $b\in B$ with $b \leq w$. Thus, $\varphi_x(b) \leq \varphi_x(w)$. Now, $ \varphi_x(b) \in A$ and $\varphi_x(w)=z \in \downa A$ imply $\varphi_x(b) = \varphi_x(w) = z$ and in particular $z\in A$. Thus, $w\in B$ and we are done
\end{proof}

Lemmas~\ref{lem:numbercomparable} and~\ref{lem:upwards} give.
\begin{proposition}
  If a weak order without minimum is a monoid poset such that $e\in A_0$, then $\mathrm{sup}\{\ell_i\mid i\geq k\}$ is independent of $k$ and $\ell_i\leq\ell_0$ for all $i\leq 0$.
\end{proposition}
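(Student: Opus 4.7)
The plan is to establish the two assertions separately. The inequality $\ell_i \leq \ell_0$ for $i \leq 0$ is an almost-immediate application of Lemma~\ref{lem:numbercomparable}: for $i<0$, pick $x \in A_i$ (possible since $P$ has no minimum); then $x<e$ in the weak order, so Lemma~\ref{lem:numbercomparable} supplies an injective order-homomorphism from $P \setminus (\upa x \cup \downa x)$ to $P \setminus (\upa e \cup \downa e)$. In a weak order these two complements are exactly the ``same-level'' sets $A_i \setminus \{x\}$ and $A_0 \setminus \{e\}$, which gives $\ell_i \leq \ell_0$; the case $i=0$ is trivial.

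For the claim that $L_k := \sup\{\ell_i : i \geq k\}$ does not depend on $k$, I would fix $x \in A_{-1}$ and, via surjectivity of $\varphi_x : \upa e \to \upa x$, choose $x' \in \upa e$ with $xx'=e$. Since $\varphi_x(e)=x\neq e$, we have $x'\neq e$, so $x' \in A_{j^*}$ for some $j^*\geq 1$. The identity $\varphi_x\circ\varphi_{x'}(y) = (xx')y = y$ shows $\varphi_{x'}$ admits $\varphi_x$ as a left-inverse and is therefore injective; combined with Theorem~\ref{thm:characterizationmono}, which gives the surjectivity $\varphi_{x'}(\upa e) = \upa x'$, this turns the restriction $\varphi_{x'}|_{\upa e} : \upa e \to \upa x'$ into an order-isomorphism, with order-preserving inverse $\varphi_x|_{\upa x'}$. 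Both sides are weak orders whose level sequences are $(\{e\}, A_1, A_2, \ldots)$ and $(\{x'\}, A_{j^*+1}, A_{j^*+2}, \ldots)$, so an order-isomorphism is forced to match the $k$-th level on each side. This yields $\ell_k = \ell_{j^*+k}$ for every $k\geq 1$; that is, $(\ell_k)_{k\geq 1}$ is periodic with period $j^*$, and in particular $L_k$ is already constant for $k\geq 1$.

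The main remaining obstacle is showing $\ell_0 \leq L_1$, since the isomorphism meets $A_0$ only at $e$ and so says nothing about the remaining $\ell_0-1$ elements. I would keep the same $x$ and $x'$ and examine $\varphi_x$-preimages of the other elements of $A_0$. For each $y \in A_0 \subseteq \upa x$ surjectivity provides $y' \in \upa e$ with $xy'=y$, and $y' \neq e$ because $\varphi_x(e)=x$ lies in $A_{-1}$, not $A_0$. The key observation is that for distinct $y_1, y_2 \in A_0$ any preimages $y_1', y_2' \in \upa e$ must lie on the same level of $P$: otherwise the strict inequality $y_1'<y_2'$ would force $y_1 = \varphi_x(y_1') \leq \varphi_x(y_2') = y_2$ by order-preservation, contradicting the antichain $A_0$. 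Taking $y_1=e$ with preimage $x' \in A_{j^*}$, every $y \in A_0\setminus\{e\}$ must then have a preimage in $A_{j^*}$, distinct from $x'$ and from each other, so I get an injection $A_0 \setminus \{e\} \hookrightarrow A_{j^*} \setminus \{x'\}$ and hence $\ell_0 \leq \ell_{j^*} \leq L_1$. Combined with $\ell_k \leq \ell_0 \leq L_1 \leq L_{k+1}$ for $k\leq -1$ (from the second assertion and the monotonicity of $L$ in $k$), this shows $L_k = L_1$ for all $k$ and finishes the proof.
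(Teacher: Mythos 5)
Your proposal is correct, but for the first assertion it takes a genuinely different route from the paper. The paper disposes of the proposition with the one-line remark that Lemmas~\ref{lem:numbercomparable} and~\ref{lem:upwards} give it: you use Lemma~\ref{lem:numbercomparable} exactly as intended for $\ell_i\leq\ell_0$ ($i\leq 0$), but you never touch Lemma~\ref{lem:upwards}. Instead, from $x\in A_{-1}$ and $x'$ with $xx'=e$ you get $\varphi_x\circ\varphi_{x'}=\mathrm{id}$, hence an order isomorphism $\varphi_{x'}\colon \upa e\to\,\upa x'$ (injectivity from the left inverse, surjectivity from Theorem~\ref{thm:characterizationmono}); since the level partition of a weak order is intrinsic and both level chains are well-ordered with a singleton bottom level, the isomorphism matches levels index by index and yields $\ell_k=\ell_{k+j^*}$ for all $k\geq 1$. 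This is actually stronger than the sup statement (periodicity of the sizes above $e$, and as a by-product the level chain cannot terminate above), whereas the intended use of Lemma~\ref{lem:upwards} is a surjectivity/counting argument on down-sets of maximal antichains. Your remaining step -- that preimages in $\upa e$ of distinct elements of the antichain $A_0$ must be incomparable, hence lie in the single level $A_{j^*}$ containing $x'$, giving $\ell_0\leq\ell_{j^*}\leq L_1$ -- is sound, and the final assembly $L_k=\max\bigl(\sup\{\ell_i\mid k\leq i\leq 0\},L_1\bigr)=L_1$ for $k\leq 0$ is correct.

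One caveat, which concerns the statement more than your proof: choosing $x\in A_{-1}$ silently assumes there is a nonempty level below $A_0$. If ``without minimum'' is read literally as ``no least element'', the bottom level may contain $e$ and have size at least two, and then the conclusion itself can fail: taking $N$ to be the three-element join-semilattice on $e<y<z$ with $M=\,\upa e=\{e,z\}$ gives a monoid poset which is the weak order with levels $\{e,y\}<\{z\}$, where $\sup\{\ell_i\mid i\geq 0\}=2\neq 1=\sup\{\ell_i\mid i\geq 1\}$. So the proposition must be understood, as the surrounding discussion of (bi-)infinite weak orders indicates, for weak orders whose chain of levels is unbounded below; under that reading your choice of $x$ is legitimate and the proof is complete.
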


The above proposition in particular yields that infinite weak orders with a maximum cannot be monoid posets, see the example in Figure~\ref{fig:diagram}. More interestingly, weak orders without minimum in which a level of maximum size appears only finitely many times, are not monoid posets. In particular, if one level is of size $2$ and all others are of size $1$, the corresponding poset depicted in Figure~\ref{fig:diagram} is not a monoid poset, but a (full) semigroup poset. We believe the following to be true.

\begin{conjecture}\label{conj:weak}
A bi-infinite weak order is a monoid poset if and only if all its levels are of the same size.
\end{conjecture}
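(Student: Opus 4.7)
The forward direction of the conjecture is already contained in Proposition~\ref{prop:weak}, so all of the content lies in the converse. Assume therefore that $P$ is a bi-infinite weak order and that $P\cong P(N,M)$ is a monoid poset; by shifting indices assume $e\in A_0$, so that $M=\upa e=\{e\}\cup\bigsqcup_{j\geq 1}A_j$, and write $\ell_j=|A_j|$. The strategy is to produce, for a chosen $a\in A_{-1}$, a two-sided inverse $p\in A_1$; then $\varphi_p$ will be an automorphism of $P$ shifting every level up by one, which immediately forces the $\ell_j$ to agree.

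The key preparatory step is a level-shift lemma: for every $x\in A_i$, $\varphi_x(A_j)\subseteq A_{i+j}$ for all $j\in\mathbb Z$. When $j\geq 1$ this is read off from the level-map function attached to the surjection $\varphi_x:M\to\upa x$: in the absence of plateaus, the strictly increasing level-map forces $\varphi_x(A_j)=A_{i+j}$. The remaining cases $j=0$ and $j\leq-1$ follow by induction on $|j|$ using associativity: for $y\in A_j$ and any fixed $t_1\in A_1$, the identity $(xy)t_1=x(yt_1)$ combined with the already established shifts for $\varphi_x$, $\varphi_y$, and $\varphi_{xy}$ pins down the level of $xy$.

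With the level shift in hand the proof is short. Pick $a\in A_{-1}$; surjectivity of $\varphi_a:M\to\upa a$ together with $e\in\upa a$ gives some $m\in M$ with $am=e$, and the level shift forces $m\in A_1$, so write $p=m$ and $ap=e$. Consider right multiplication $\psi_p:A_0\to A_1$, $y\mapsto yp$; this is well defined because $\varphi_y(A_1)\subseteq A_1$ for $y\in A_0$. For any $p'\in A_1$ the level shift puts $p'a\in A_0$ and $(p'a)p=p'(ap)=p'e=p'$, so $p'a$ is a $\psi_p$-preimage of $p'$; thus $\psi_p$ is surjective, and the same mechanism makes $\varphi_a|_{A_1}:A_1\to A_0$ surjective. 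Two surjections between finite sets $A_0$ and $A_1$ in opposite directions force $|A_0|=|A_1|$ and that both maps are bijective. Finally, $ep=p$ and $(pa)p=p(ap)=pe=p$ give $\psi_p(e)=\psi_p(pa)$, and injectivity yields $pa=e$. Thus $ap=pa=e$, so $\varphi_a\circ\varphi_p=\varphi_p\circ\varphi_a=\mathrm{id}_P$ and $\varphi_p$ is an automorphism of $P$ shifting levels by $+1$; in particular $\varphi_p|_{A_j}:A_j\to A_{j+1}$ is a bijection for every $j\in\mathbb Z$, so $\ell_j=\ell_{j+1}$ and all levels share the same cardinality.

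The main obstacle I expect is the ``no plateau'' assumption entering the level-shift lemma: a plateau in the level-map of some $\varphi_x$ arises exactly when one of its target levels has size one, and then $\varphi_x(A_j)$ may sit strictly below $A_{i+j}$. In that case the element $m$ produced for $a$ could lie in $A_{j_0}$ with $j_0\geq 2$ rather than in $A_1$, and the surjectivity argument for $\psi_p$ no longer places preimages in $A_0$. The natural remedy is an a~priori reduction in the spirit of the proof of Theorem~\ref{th:monoidnotfull}: an associativity argument should show that the existence of any singleton level forces every level to be a singleton, so the conjecture holds trivially in that case; outside this degenerate regime one has $\ell_j\geq 2$ everywhere, no plateau is possible, and the argument above closes the proof.
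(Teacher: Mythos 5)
First, a caveat about the comparison you asked for: the statement you are proving is Conjecture~\ref{conj:weak}, which the paper leaves open. The paper only establishes the ``if'' direction (Proposition~\ref{prop:weak}) and, via Lemmas~\ref{lem:numbercomparable} and~\ref{lem:upwards}, the weaker necessary condition that $\sup\{\ell_i\mid i\geq k\}$ is independent of $k$ and $\ell_i\leq\ell_0$ for $i\leq 0$. So there is no proof in the paper to measure your argument against; it must stand on its own, and in my view it does not, because of the ``level-shift lemma''.

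Your endgame is fine: if one knew that $\varphi_x(A_j)\subseteq A_{i+j}$ for every $x\in A_i$, then producing $p\in A_1$ with $ap=pa=e$ does make $\varphi_p$ an order automorphism carrying $A_j$ onto $A_{j+1}$, and the conjecture follows (modulo the tacit finiteness of levels: two opposite surjections between infinite levels need not be bijections, and the definition of weak order does not require finite levels). But the level-shift lemma is exactly where the difficulty of the conjecture is concentrated, and your justification does not establish it. There is no ``level-map attached to $\varphi_x$'': distinct elements of $A_j$ are incomparable, so order-preservation places no constraint forcing their images into a common level. More importantly, an order-preserving surjection from $M$, with level sizes $(1,\ell_1,\ell_2,\ldots)$, onto $\upa x$, with level sizes $(1,\ell_{i+1},\ell_{i+2},\ldots)$, need not shift levels uniformly when these two size sequences differ --- and their equality is precisely what you are trying to prove, so it cannot be presupposed. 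Concretely, if $\ell_1=5$ and $\ell_j=2$ for $j\geq 2$, an order-preserving surjection $M\to\,\upa x$ with $x\in A_1$ can send $A_1$ onto $A_2\cup A_3$ and then $A_j$ onto $A_{j+2}$ for $j\geq 2$; no singleton level occurs, so your dichotomy ``plateau $\Leftrightarrow$ some target level has size one'' is false at the level of order-preserving surjections, and nothing in your argument shows that associativity or the other maps $\varphi_z$ exclude such behavior --- ruling this out is the actual content of the conjecture, and presumably why the authors could only prove the $\sup$-statement. The two auxiliary steps are also only asserted, not proved: the reduction of the singleton-level case (``an associativity argument should show\ldots'') and the induction giving the cases $j\leq 0$, which in any event rests on the unproved case $j\geq 1$.
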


\section{Auto-equivalent posets}\label{sec:autoequivalent}
In the present section we extend results of~\cite{Cha-15} by characterizing classes of so-called auto-equivalent posets as Cayley posets of certain group-embeddable abelian monoids. More precisely, we will characterize Cayley posets of { finitely generated} pointed submonoids  abelian groups. For the particular case when $ M$ is a submonoid of $\Z^m$, these posets were characterized in~\cite[Theorem 5.5]{Cha-15}. 


Let us start with the introduction of the monoids that we consider. We will mostly consider submonoids of groups. 
If $M$ is a submonoid of a group $G$, 
then the binary relation $\lS$ of $P(M,M)$ can be rewritten as: 
\begin{center} $s \lS t\, \Longleftrightarrow\, { s^{-1}t} \in  M$. \end{center} Since $M$ is a monoid, a direct application of Proposition \ref{prop:sunital} yields that $\lS$ is an order relation if and only if the acyclicity condition is satisfied. In this setting this is equivalent to the neutral element $e$ of $M$ being its only invertible element, i.e., $ M \cap M^{-1} = \{e\}$. This property of $M$ is called {\it pointed}. 
The second property that we require for $M$ is that $M$ is finitely generated and the group $G$ containing it is abelian. Monoids of this kind have been considered extensively due to their connections with (binomial) lattice ideals (see, for example, \cite{BCMP,Eis-96,Mil-05}). A feature of these monoids, that we will make use of, is that they have a unique minimal set of generators. It coincides with the set of \emph{atoms} of $P(M,M)$, i.e., the minimal elements of $ M \setminus \{e\}$ with respect to $\lS$.

%
%
%
%

Let us now give the necessary definitions to describe the resulting class of posets. 
We say that a poset $P$ is \emph{auto-equivalent} if $P$ has a global minimum and there exists a commutative submonoid $T < \End(P)$ such that for
every $x \in P$ there exists a unique $\varphi_x \in T$  such that $\varphi_x(P) =\, \upa x$ and $\varphi_x: P \rightarrow\, \upa x$ is an order isomorphism.
Note that this definition corresponds to the one given in~\cite{Cha-15}.
Finally, a poset $P$ is said to be \emph{locally finite} if for all $x, y \in P$, there is a finite number of elements in the interval $[x,y] := \{z \in P \, \vert \, x \leq z \leq y\}$. 



\begin{lemma}\label{lem:monotopo}
 Let $M$ be an acyclic abelian monoid. We have that $P(M,M)$ is auto-equivalent. Moreover, if $M$ is a finitely generated submonoid of a group, then $P(M,M)$ is locally finite and has a finite number of atoms.
\end{lemma}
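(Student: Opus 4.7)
The plan is to take $T := L(M) = \{\varphi_x \mid x \in M\}$, where $\varphi_x$ denotes left-multiplication by $x$, as the witness commutative submonoid. Lemma~\ref{lem:basic} tells us that $L(M)$ is a subsemigroup of $\End(P)$ and that the map $x \mapsto \varphi_x$ is a semigroup isomorphism $M \cong L(M)$. Because $M$ has a neutral element $e$ with $\varphi_e = \mathrm{id}_P$, the set $T$ is in fact a submonoid of $\End(P)$, and the abelianness of $M$ transfers to $T$ via $\varphi_x \circ \varphi_y = \varphi_{xy} = \varphi_{yx} = \varphi_y \circ \varphi_x$. The existence and uniqueness of $\varphi_x$ with $\varphi_x(P) = \upa x$ follow from $M \cong L(M)$ together with the equality $\varphi_x(P) = xM = \upa x$, which is immediate from the definition of $\lS$. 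Since $e$ is furthermore the global minimum of $P(M,M)$, all that remains for auto-equivalence is to check that each $\varphi_x: P \to \upa x$ is an order isomorphism.

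For this I would verify, in turn, surjectivity (already built in), order-preservation (Lemma~\ref{lem:basic}), injectivity, and order-reflection. The last two both follow from cancellation by $x$: from $xy = xy'$ one immediately concludes $y = y'$, and from $xys = xy'$ one similarly derives $ys = y'$, hence $y \lS y'$. I expect this cancellation step to be the only real subtlety of the first part: in a general acyclic abelian monoid cancellation need not hold, so one uses here that the monoids considered in this section are implicitly assumed to embed in abelian groups, which is the convention running throughout the section.

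For the moreover clause, assume that $M$ is a finitely generated pointed submonoid of an abelian group $G$, which without loss of generality we may take to be finitely generated, so $G \cong \mathbb{Z}^d \oplus H$ with $H$ a finite abelian group. The atoms of $P(M,M)$ are, by definition, the minimal elements of $M \setminus \{e\}$ under $\lS$, and they coincide with the irreducible elements of $M$ and hence with the unique minimal generating system of $M$; the standard structure theory of finitely generated pointed abelian monoids guarantees that this system is finite. For local finiteness, an interval $[x,y]$ in $P(M,M)$ is, by cancellation inside $G$, in bijection with the set of divisors in $M$ of the element $x^{-1}y$. Projecting to $\mathbb{Z}^d$, the pointed condition forces $M$ to lie inside an open linear half-space, so the divisors of any element are contained in a bounded region and form a finite set; lifting back through the finite torsion component $H$ keeps the divisor set finite. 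The main obstacle here is precisely this bounded-cone argument, for which the combination of pointedness and finite generation is essential.
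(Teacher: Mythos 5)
Your route is essentially the paper's: the witness submonoid is $L(M)=\{\varphi_x\mid x\in M\}$, commutativity of $M$ and $\varphi_e=\mathrm{id}$ make it a commutative submonoid of $\End(P)$, $e$ is the global minimum, and $\varphi_x(P)=xM=\upa x$ gives existence and uniqueness. For the moreover clause you also do what the paper does: the atoms are exactly the elements of the unique minimal (hence finite) generating set, and local finiteness is the standard argument for finitely generated pointed submonoids of abelian groups, which the paper only cites and you sketch correctly (no nontrivial torsion in $M$, the projection to the free part lies in a pointed cone, so a strictly positive integral functional bounds the divisors of $x^{-1}y$, and the finite torsion part keeps fibres finite).

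The one divergence concerns the cancellation step, and it deserves a comment. You are right that surjectivity and order-preservation of $\varphi_x$ are automatic while injectivity and order-reflection need cancellativity, and you are also right that cancellativity does not follow from acyclicity: for instance a finite chain is $P(M,M)$ for its join operation, an acyclic abelian monoid, and there no map $P\to\upa x$ with $x$ above the minimum can be an order isomorphism, so the left multiplications do not do the job. However, your resolution -- that group-embeddability is ``the convention running throughout the section'' -- is not what the lemma states: the first sentence is asserted for every acyclic abelian monoid (group-embeddability enters only in the moreover clause), and the paper later invokes precisely this first part for a non-cancellative monoid in the proposition about $(\N^2,\leq_{\mathrm{lex}})$. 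The paper's own proof never verifies the order-isomorphism condition at all, so in effect you have proved a corrected, restricted statement (auto-equivalence when $M$ is cancellative, equivalently a pointed submonoid of an abelian group), which is exactly what Theorem~\ref{equivasemigrupo} needs, rather than the statement as printed. So treat this as a defect of the lemma's formulation rather than of your argument, but state explicitly that you are adding the cancellativity hypothesis instead of attributing it to an unstated convention.
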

\begin{proof}
  The poset $P(M,M)$ has the identity element $e \in  M$ as minimum. Taking $\varphi_s:  M \rightarrow  M$ as $\varphi_s(s') = s + s'$ we have that $\varphi_e$ is the identity map on $M$ and $\varphi_y \circ \varphi_z = \varphi_z \circ \varphi_y = \varphi_{y+z}$, for all $y,z \in  M$. Thus, $P(M, M)$ is auto-equivalent.
  
  If $M$ is finitely generated, then its unique minimal (finite) set of generators $A$ corresponds to the atoms of $P(M,M)$. Moreover, since $M$ is acyclic and the submonoid of a group, $M$ is pointed. Hence, a standard argument yields that $P(M, M)$ is locally finite, see e.g.~\cite{BCMP}.
\end{proof}

\begin{lemma}\label{lem:potomono}
 Let $P$ be a locally finite, auto-equivalent poset. We have that there is a pointed submonoid $M$ of an abelian group such that $P \cong P(M,M)$. Moreover, if $P$ has a finite number of atoms, then $M$ is finitely generated.
\end{lemma}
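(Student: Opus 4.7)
The plan is to transfer the commutative monoid structure from $T \subseteq \End(P)$ onto the ground set $P$ itself via the bijection $x \leftrightarrow \varphi_x$, check that the resulting monoid is commutative, pointed and cancellative (hence embeddable in an abelian group), and then use local finiteness to handle the statement about atoms.

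First I would let $e$ denote the global minimum of $P$ and observe that, by the uniqueness clause in the definition of auto-equivalence, $\varphi_e$ is forced to be the identity of $\End(P)$ (which lies in $T$ since $T$ is a submonoid). In particular $\varphi_x(e)$ is the minimum of $\upa x$, i.e.\ $\varphi_x(e)=x$, so $x \mapsto \varphi_x$ is injective. I would then define on $P$ the operation $x \cdot y := \varphi_x(y)$. Reasoning as in the ``$\Leftarrow$'' direction of Theorem~\ref{thm:characterizationsemi}, and using that each $\varphi_x$ is an \emph{order isomorphism} onto $\upa x$ to obtain $\varphi_x(\upa y) = \upa \varphi_x(y)$, one deduces $\varphi_x \circ \varphi_y = \varphi_{\varphi_x(y)}$. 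This gives associativity, $e$ as two-sided identity, and the isomorphism $P \cong P(P,P)$. Commutativity of $T$ turns into commutativity of $\cdot$ by evaluating $\varphi_x \circ \varphi_y = \varphi_y \circ \varphi_x$ at $e$.

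Next I would verify the two properties needed for a group embedding. \emph{Pointedness}: if $x \cdot y = e$, then $x \leq x \cdot y = e$, forcing $x = e$ and hence $y = e$. \emph{Cancellativity}: if $x \cdot y = x \cdot z$, injectivity of the order isomorphism $\varphi_x$ gives $y = z$. The Grothendieck construction then embeds the commutative cancellative monoid $M := (P,\cdot)$ into an abelian group $G$, and pointedness of $M$ translates directly into $M \cap M^{-1} = \{e\}$ in $G$. Thus $M$ is a pointed submonoid of an abelian group with $P \cong P(M,M)$.

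For the moreover part, I would denote by $A$ the set of atoms of $P$ and prove by induction on $|[e,x]|$, which is finite by local finiteness, that every $x \in M$ lies in the submonoid generated by $A$. The case $x=e$ is the empty product. Otherwise, pick $a$ minimal in $[e,x]\setminus\{e\}$; then $a \in A$ and $a \leq x$ yields $x = a \cdot y$ for some $y \in M$. From $e \leq a$ we get $y = e \cdot y \leq a \cdot y = x$, while cancellativity together with $a \neq e$ rules out $y = x$, so $[e,y] \subsetneq [e,x]$ and the induction hypothesis applies. Hence $A$ generates $M$, and if $|A| < \infty$ then $M$ is finitely generated. The most delicate step is this last induction: one needs the global minimum to guarantee the existence of an atom below every non-identity element, and cancellativity to force the interval $[e,\cdot]$ to strictly shrink at each step. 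Everything else is essentially bookkeeping, transferring the endomorphism structure back onto $P$.
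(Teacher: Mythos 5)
Your proposal is correct, but it takes a genuinely different route from the paper's proof. The paper constructs $M$ by generators and relations: it builds the map $f\colon\N^A\to P$ from the free commutative monoid on the set $A$ of atoms by iterating the maps $\varphi_a$, uses commutativity of $T$ for well-definedness and local finiteness already at this stage for surjectivity of $f$, and then takes $M=\N^A/L_P$ inside $G=\Z^A/L_P$ (with several verifications outsourced to \cite{Cha-15}); the payoff is an explicit presentation, e.g.\ $G\cong\Z^m\oplus T'$ with $T'$ finite when $A$ is finite. You instead transfer the structure of $T$ onto the ground set via $x\cdot y:=\varphi_x(y)$, in the spirit of the ``$\Leftarrow$'' direction of Theorem~\ref{thm:characterizationsemi}, and your key steps are sound: $\mathrm{id}\in T$ plus uniqueness gives $\varphi_e=\mathrm{id}$ and $\varphi_x(e)=x$; the order-isomorphism clause gives $\varphi_x(\upa y)=\upa\varphi_x(y)$ and hence $\varphi_x\circ\varphi_y=\varphi_{\varphi_x(y)}$, from which associativity, commutativity, pointedness and cancellativity follow, so the group-of-differences (Grothendieck) construction applies; local finiteness enters only in your final induction showing the atoms generate, where you correctly use the global minimum to find an atom below any $x\neq e$ and cancellativity to force $[e,y]\subsetneq[e,x]$. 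What your route buys is a self-contained argument that cleanly separates group-embeddability from finite generation and makes explicit where each hypothesis is used. One remark: since your embeddability argument never invokes local finiteness, it may appear to clash with the $(\N^2,\leq_{\mathrm{lex}})$ example discussed after Theorem~\ref{equivasemigrupo}; there is no conflict, because the translations exhibited there are not injective, so that example only witnesses the weaker condition $\varphi_x(P)=\upa x$ appearing in the proof of Lemma~\ref{lem:monotopo} --- your cancellativity step is precisely where the order-isomorphism clause of the definition is indispensable, just as it is (implicitly, via \cite{Cha-15}) in the paper's verification that $f(\alpha)=f(\beta)$ whenever $\alpha-\beta\in L_P$.
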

\begin{proof}
 Denote by $A$ the set of atoms of $P$ and take $T = \{\varphi_x\}_{x \in P}$ a commutative submonoid of ${\rm End}(P)$ such that $\varphi_x(P) =\, \upa x$ and $\varphi_x: P \rightarrow\, \upa x$ is an order isomorphism.

We are going to associate to $P$ a subgroup $L_{P} \subseteq \Z^A$, where $\Z^A$ is the set of mappings from $A$ to $\Z$ with finite support. Defining $\N^A$ similarly, we consider $f: \N^A \longrightarrow  P$ defined inductively as $f(0) = e$ and $f(\alpha + e_a) = \varphi_{a}(f(\alpha))$ for all 
 $\alpha \in \N^A$, where $e_a(x)=1$ if $x=a$ and $e_a(x)=0$ otherwise for all $x\in A$. In particular, $f(e_a) = \varphi_{a}(f(0)) = \varphi_{a}(e) = a$ for all $a \in A$.

It is not difficult to check that $f$ is well defined (because $P$ is auto-equivalent) and surjective (because $P$ is locally finite) and the set $L_{P} := \{\alpha - \beta  \in \Z^A \, \vert \, f(\alpha) = f(\beta) \}$ is a subgroup of $\Z^A$ (see \cite[Section 5]{Cha-15}). 
We set $G=\Z^A / L_{P}$ and $M=\N^A / L_{P}$, where the latter is just the subset of equivalence classes that have non-empty intersection with $\N^A$. Clearly, $M$ is a submonoid of the abelian group $G$, and if $A$ is finite, then $G\cong\Z^m \oplus T$, where $m = {\rm rk}(\Z^{|A|} / L_{P})$ and $T$ is a finite abelian group, i.e., $G$ is finitely generated. Furthermore, $M$ is pointed and $(P, \leq)$ and $P(M,M)$ are isomorphic. More precisely, 
$$
\begin{array}{cccl}
\psi: & P & \longrightarrow &  M \\
& x & \longmapsto & \sum_{a\in A} \alpha(a) e_a, \text{ if  } f(\alpha) = x
\end{array}
$$
is an order isomorphism and $M$ is generated by $\psi(A)$.
\end{proof}

An immediate consequence of Lemmas~\ref{lem:monotopo} and~\ref{lem:potomono} is the following:

\begin{theorem}\label{equivasemigrupo}
A poset $(P, \leq)$ is isomorphic to $P(M,M)$ for a  { finitely generated} pointed submonoid $M$ of an abelian group if and only if $P$ is auto-equivalent, locally finite, and has a finite number of atoms.
\end{theorem}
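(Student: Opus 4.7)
The plan is to derive this theorem as the immediate formal consequence of the two preceding lemmas, which between them carry essentially all of the content. First, I would observe that the condition ``$M$ is a pointed submonoid of an (abelian) group'' is exactly the acyclicity condition of Proposition~\ref{prop:sunital} in the group-embedded setting, as spelled out in the paragraph just before Lemma~\ref{lem:monotopo}. With this identification in hand, both directions reduce to citing a lemma.

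For the ``only if'' direction, I would assume $P \cong P(M,M)$ with $M$ a finitely generated pointed submonoid of an abelian group. Pointedness makes $M$ an acyclic abelian monoid, so Lemma~\ref{lem:monotopo} supplies that $P(M,M)$ is auto-equivalent; its strengthened conclusion, activated by finite generation inside a group, further provides local finiteness and the finite-atoms property. All three of these are purely order-theoretic (auto-equivalence is defined in terms of $\End(P)$, and the other two are intrinsic to the order), hence transport across the isomorphism $P \cong P(M,M)$ without any additional argument.

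For the ``if'' direction, I would feed $P$ straight into Lemma~\ref{lem:potomono}. Local finiteness plus auto-equivalence already yield a pointed submonoid $M$ of an abelian group $G = \mathbb{Z}^A/L_P$ with $P \cong P(M,M)$. Invoking the second half of the same lemma, the finiteness of the atom set $A$ upgrades this to $M$ being finitely generated (namely, by the image of $A$ under the map $\mathbb{N}^A \to M$ produced in the construction).

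I do not anticipate any real obstacle, since the constructive work in both directions has already been carried out in Lemmas~\ref{lem:monotopo} and~\ref{lem:potomono}; the theorem is only their biconditional packaging. The only point deserving a brief remark is the matching of vocabularies between ``pointed submonoid of a group'' (used in the statement) and ``acyclic abelian monoid'' (used in Lemma~\ref{lem:monotopo}), which is the single sentence of preparation the proof really needs.
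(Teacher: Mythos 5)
Your proposal is correct and matches the paper exactly: the theorem is stated there as an immediate consequence of Lemmas~\ref{lem:monotopo} and~\ref{lem:potomono}, with the forward direction from the first lemma and the converse from the second. Your extra remark identifying ``pointed submonoid of a group'' with ``acyclic'' (via the discussion preceding Lemma~\ref{lem:monotopo}) is the same bookkeeping the paper relies on implicitly, so nothing is missing.
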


Note that local finiteness is essential in the previous result, as the following result shows. Here, by $\leq_{\mathrm{lex}}$ we denote the lexicographic order.
\begin{proposition}
 The poset $P=(\N^2, \leq_{\mathrm{lex}})$ is auto-equivalent, has exactly one atom, and is an abelian monoid poset. However, there is no group-embeddable $M$ such that $P \cong P(M,M)$.
\end{proposition}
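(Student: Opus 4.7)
My plan for proving the proposition is to dispatch the three positive claims first and then concentrate on the final, negative claim, which is the heart of the argument.

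For the easy observation that $P$ has exactly one atom: the global minimum is $(0,0)$, and the only element covering it in the lex order is $(0,1)$, since for any $(a,b)>(0,0)$ with $(a,b)\neq(0,1)$ we have $(0,1)<_{\mathrm{lex}}(a,b)$. For auto-equivalence and the identification of $P$ as an abelian monoid poset, I would proceed simultaneously by equipping $\N^2$ with an explicit commutative monoid operation. Every principal upset $\upa(a,b)$ is again order-isomorphic to $(\N^2,\leq_{\mathrm{lex}})$, so each $x\in P$ carries a canonical order-isomorphism $\varphi_x:P\to\upa x$. My proposal is to define the binary operation $(a,b)\oplus(c,d):=\varphi_{(a,b)}(c,d)$ and verify directly that $\oplus$ is commutative and associative by a case split according to whether each of $a$ and $c$ is zero or positive, yielding an abelian monoid $N=(\N^2,\oplus)$ with $P(N,N)\cong P$. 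The family $T=\{\varphi_x:x\in N\}$ is then a commutative submonoid of $\mathrm{End}(P)$ witnessing auto-equivalence.

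The main content of the proof is the final claim. Assume for contradiction there is a pointed submonoid $M$ of an abelian group $G$ with an order isomorphism $\psi:M\to P$. Since $P$ has global minimum $(0,0)$ and $M$ is pointed, $\psi(0_M)=(0,0)$; the unique atom forces $a:=\psi^{-1}(0,1)$ to have infinite order in $G$ (otherwise some $na=0$ would give $a,-a\in M$, violating pointedness). Hence $\{0_M,a,2a,3a,\ldots\}$ are pairwise distinct elements of $M$ forming an infinite chain in $\leq_M$. Since $a$ is the only atom of $P$, an induction on the cover relation gives $\psi(na)=(0,n)$: if $z\in M$ satisfied $ka<_M z<_M(k+1)a$, cancellation in $G$ would force $a$ to be a nontrivial sum in $M$, contradicting that $a$ is the only atom.

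Now set $c:=\psi^{-1}(1,0)\in M$. Because $(0,1)<_{\mathrm{lex}}(1,0)$ we have $a\leq_M c$, i.e. $c-a\in M\subseteq G$. Moreover $c-a<_M c$ (since $a$ has infinite order), so $\psi(c-a)$ lies in the strict downset of $(1,0)$, which is $\{(0,\ell):\ell\in\N\}$. By the identification from the previous paragraph, $c-a=\ell\cdot a$ for some $\ell\in\N$. Thus $c=(\ell+1)\,a$ is a multiple of $a$, so $\psi(c)\in\{(0,n):n\in\N\}$, contradicting $\psi(c)=(1,0)$. I expect the most delicate step to be the construction of the abelian monoid operation $\oplus$ in the first paragraph: it must realize the lex order exactly, which rules out ordinary componentwise addition and forces the operation to ``collapse'' second coordinates when crossing rows. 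The key observation making the positive claim compatible with the negative one is that this $\oplus$ must be non-cancellative---any cancellative (hence group-embeddable, in the commutative setting) abelian monoid structure is precisely what the atom argument of the two previous paragraphs rules out.
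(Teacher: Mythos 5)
Your proposal has two genuine problems, one in the positive part and one in the negative part.

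\textbf{The positive claims.} Since $\leq_{\mathrm{lex}}$ well-orders $\N^2$ (order type $\omega^2$), the order isomorphism $\varphi_{(a,b)}\colon P\to\ \upa (a,b)$ is \emph{unique}; it sends $(0,d)\mapsto (a,b+d)$ and $(c,d)\mapsto (a+c,d)$ for $c\geq 1$. Hence your operation is forced to be $(a,b)\oplus (c,d)=(a,b+d)$ if $c=0$ and $(a+c,d)$ if $c\geq 1$. This is associative, has identity $(0,0)$ and realizes $\leq_{\mathrm{lex}}$, but it is \emph{not} commutative: $(1,0)\oplus(1,1)=(2,1)$ while $(1,1)\oplus(1,0)=(2,0)$. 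So the planned case split cannot verify commutativity, and neither the abelian-monoid claim nor auto-equivalence is established this way; moreover, because the isomorphisms onto principal upsets are unique and fail to commute, no repair is possible within your framework of genuine order isomorphisms. The paper proceeds differently: it takes the lexicographic join, $(i,j)\cdot(k,\ell)=(i,j)$ if $i>k$, $(k,\ell)$ if $i<k$, and $(i,\max(j,\ell))$ if $i=k$, which is a commutative (idempotent) monoid with identity $(0,0)$ whose Cayley poset is $P$, and then obtains auto-equivalence by invoking Lemma~\ref{lem:monotopo}, whose witnessing maps are the left translations of this join monoid (these are onto the principal upsets, though not injective).

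\textbf{The negative claim.} Your argument is correct, but only under the additional hypothesis that the ambient group is abelian. The induction giving $\psi(na)=(0,n)$ survives in a non-abelian group (one writes $a=(a^{-k}z)(z^{-1}a^{k+1})$), but the step ``$c-a<_M c$'' amounts multiplicatively to $c^{-1}ac\in M$, which you cannot assert without commutativity. The statement claims there is no group-embeddable $M$ at all, and the paper's proof covers this: it shows that any monoid with $P\cong P(M,M)$ fails cancellativity, by analyzing $a\cdot(1,0)$ for $a$ in the bottom row using only that left multiplication is an order endomorphism mapping $P$ onto $\upa a$, and concluding $a\cdot(1,0)=(1,0)$ for all such $a$; non-cancellativity rules out embedding into \emph{any} group. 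Your atom/chain argument is a genuinely different and pleasant route in the abelian case, and it can probably be pushed to the general case (for cancellative $M$ inside any group, left translation by the atom is an order isomorphism onto $\upa a$, hence under $\psi$ it is the unique isomorphism onto $\upa(0,1)$, which fixes $(1,0)$, giving $ac=c$ and contradicting cancellativity), but as written it proves a weaker statement than the one claimed.
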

\begin{proof}
Clearly, $P$ has just one atom.
 Let us show that $P \cong P(M,M)$ for an abelian monoid.  Indeed, define $$(i,j)\cdot (k,\ell)=\begin{cases}
 (i,j) & i>k,\\
 (k,\ell) & i<k,\\
 (i,\max(j,\ell)) & i=k.
 \end{cases}$$
 It is straightforward to check that $M$ is an abelian monoid such that $P \cong P(M,M)$. By Lemma~\ref{lem:monotopo} $P$ is auto-equivalent.
 
 Let $M$ be a monoid on $\N^2$ such that $P \cong P(M,M)$. We show that $M$ cannot be cancellative and therefore is not the submonoid of a group. Let $a\in\{0\}\times\N$ and consider $a\cdot(1,0)$. 
 If $a\cdot(1,0)=(0,m)<(1,0)$, then since left-multiplication is a endomorphism there are $(0,k),(0,\ell)$ such that $a\cdot(0,k)=a\cdot(0,\ell)\leq (0,m)$ and $M$ is not cancellative.
 If $a\cdot(1,0)>(1,0)$, then there is some $(0,m)$ such that $a\cdot(0,m)=(1,0)$, but then the elements between $a$ and $(0,m)$ do not suffice to cover all elements between $a$ and $(1,0)$.
 Hence, for all $a\in\{0\}\times\N$ we have $a\cdot(1,0)=(1,0)$ and $M$ is not cancellative.
\end{proof}

We conclude that while locally finite auto-equivalent posets are group-embeddable (abelian) monoid posets, this does not hold any longer when dropping the local finiteness. We do not know whether auto-equivalent posets are (abelian) monoid posets in general. A first relaxation of the concept of auto-equivalent would be to drop the requirement that the submonoid $T<\End(P)$ has to be abelian. Free monoids yield such posets that are upwards oriented regular trees, see Figure~\ref{fig:bintree}. Adding relations one can obtain a big class of upwards oriented trees. What can be said about these posets?

\begin{figure}[h]
\centering
\includegraphics[height=.4\textwidth]{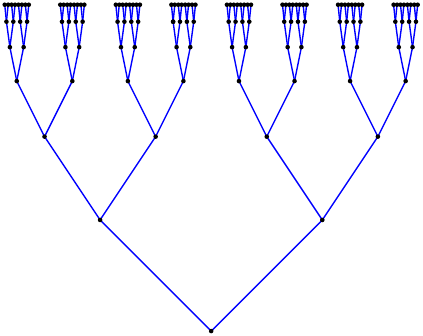}
\caption{The Cayley poset of the free monoid on two generators.}
\label{fig:bintree}
\end{figure}

Another relaxation is to allow posets without global minimum, but require that for all $x,y\in P$ there is a unique $\varphi_{xy}\in T$ such that $\varphi_{xy}(\upa x)=\upa y$ and the restriction of $\varphi_{xy}$ to $\upa x$ is an order isomorphism. 
More generally, as mentioned in the introduction, a poset is called \emph{uniform} if for every pair of elements $x,y$ the posets $\upa x$ and $\upa y$ are order isomorphic. This concept was introduced in~\cite{Mun-66} (with respect to principal downsets in semilattices). Clearly, auto-equivalent posets are uniform. In this most general setting we suspect:
\begin{conjecture}\label{conj:uniform}
 There are uniform posets that are not monoid posets.
\end{conjecture}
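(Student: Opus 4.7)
The plan is to settle Conjecture~\ref{conj:uniform} by exhibiting an explicit uniform poset and showing, via Theorem~\ref{thm:characterizationmono}, that it admits no representation as a monoid poset. First I would survey the uniform posets already treated in the paper: auto-equivalent posets are automatically uniform, and by Theorem~\ref{equivasemigrupo} the locally finite ones with finitely many atoms are (abelian) monoid posets. The lexicographic example $(\N^2,\leq_{\mathrm{lex}})$ discussed just before the conjecture shows that dropping local finiteness already breaks group-embeddability, although that particular poset \emph{is} still a monoid poset. A natural first attempt is therefore to look for an auto-equivalent poset that fails to be a monoid poset at all --- plausible candidates being transfinite lexicographic products such as $(\N^{\omega},\leq_{\mathrm{lex}})$, or auto-equivalent posets with infinitely many atoms built from non-finitely-generated commutative-monoid-like data. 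Any such example would automatically be uniform and thus settle the conjecture.

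If no auto-equivalent counterexample can be produced, I would instead drop the global minimum and search among uniform but non-auto-equivalent posets. A concrete candidate is the bi-infinite tower $P := \Z \times P_2$, where $P_2$ denotes the Cayley poset of the free monoid on two generators (Figure~\ref{fig:bintree}), equipped with the order $(n,\sigma) \leq (m,\tau) :\iff n<m$, or $n=m$ and $\sigma$ is a prefix of $\tau$. A direct check shows that every upset $\upa(n,\sigma)$ is order-isomorphic to $\N \times P_2$ under the analogous weak-plus-prefix order, so $P$ is uniform, while $P$ has no global minimum and no obvious simply-transitive endomorphism family.

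To rule out monoid representations of such a $P$, I would assume $P\cong P(N,M)$ and invoke Theorem~\ref{thm:characterizationmono}: there must exist an element $e\in P$ with $M=\,\upa e$, together with a submonoid $L<\End(P)$ such that for every $x\in P$ there is a unique $\varphi_x\in L$ with $\varphi_x(\upa e) =\, \upa x$. Fixing $e=(n_0,\sigma_0)$, I would analyze the endomorphisms $\varphi_{(n,\sigma)}$ for $(n,\sigma)$ in layers strictly below $n_0$. On the ``vertical'' chains of $\upa e$ moving through higher $\Z$-layers the $\varphi_x$ are essentially forced to be shifts, while on the $P_2$-branches inside each layer they must preserve the cancellative prefix order. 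Using the semigroup law $\varphi_x\circ\varphi_y=\varphi_{xy}$ with $xy:=\varphi_x(y)$, the aim is to produce two pairs $(x,y)\neq(x',y')$ with $\varphi_x(y)=\varphi_{x'}(y')$ whose respective compositions contradict either the uniqueness of $\varphi_{xy}$ in $L$ or the injectivity of its restriction to $\upa e$.

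The main obstacle I foresee is that $\End(P)$ is very rich: branches of $P_2$ can be collapsed onto other branches and $\Z$-layers can be fused, so the family of potential monoid representations one must rule out has a lot of flexibility. A plausible workaround is to \emph{rigidify} $P$ by replacing each copy of $P_2$ with a poset of the same isomorphism type whose endomorphism monoid is drastically smaller (for instance by attaching pairwise incomparable tags to each vertex that any endomorphism must preserve), keeping $P$ uniform while forcing $\End(P)$ close to its automorphism group. The final proof should combine this rigidification with a combinatorial invariant --- such as the isomorphism type of small Hasse neighbourhoods at a vertex, or the number of pairs of incomparable elements with a common lower bound at a given distance --- that is preserved by every $\varphi_x$ but cannot match $\upa e$ with some $\upa x$, yielding the required contradiction.
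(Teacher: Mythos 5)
The statement you are trying to prove is an open conjecture: the paper itself offers no proof, only a suggested route (build $P=(V\times\Z,\leq)$ from a vertex-transitive digraph that is not the Cayley graph of a monoid, which reduces to the open Question~\ref{quest:graph}). Your text is likewise a plan rather than a proof: at no point is any uniform poset actually shown not to be a monoid poset. The decisive step --- excluding \emph{every} submonoid $L<\End(P)$ and principal upset satisfying Theorem~\ref{thm:characterizationmono} --- is precisely what you defer. For your main candidate, the bi-infinite series composition $P=\Z\times P_2$, the uniformity check is fine, but you only announce the intention to derive a contradiction from $\varphi_x\circ\varphi_y=\varphi_{xy}$, and when you anticipate that $\End(P)$ is too flexible you appeal to an unspecified ``rigidification'' and an unspecified invariant; neither the modified poset, nor a verification that it stays uniform, nor the actual contradiction is exhibited. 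The choice of candidate is itself risky: $\Z\times P_2$ is the series-composition analogue of a bi-infinite weak order with all levels equal, and for those the paper proves, respectively conjectures, a \emph{positive} answer (Proposition~\ref{prop:weak}, Conjecture~\ref{conj:weak}); nothing you write rules out that $\Z\times P_2$ admits a monoid representation, e.g.\ via some non-obvious non-cancellative operation, so the example may simply fail.

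The fallback candidates of your first paragraph cannot work at all. $(\N^{\omega},\leq_{\mathrm{lex}})$ is a chain, and chains are full monoid posets, as noted at the beginning of Section~\ref{sec:examples}. More generally, no auto-equivalent poset can be a counterexample: if $m$ is the global minimum and $T=\{\varphi_x\}$ the commuting family, then $\varphi_x(m)=x$, and the composite $\varphi_x\circ\varphi_y$ lies in $T$ and maps $P$ order-isomorphically onto $\upa \varphi_x(y)$, hence equals $\varphi_{\varphi_x(y)}$ by the uniqueness in the definition; thus $\{\varphi_x\}_{x\in P}$ is a submonoid of $\End(P)$ satisfying the hypothesis of Theorem~\ref{thm:characterizationmono} with $\upa m=P$, so every auto-equivalent poset is already a (full) monoid poset. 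Consequently any counterexample must be uniform but far from auto-equivalent --- exactly the regime in which your argument stops short. As it stands, the proposal identifies the right tool (Theorem~\ref{thm:characterizationmono}) but leaves the conjecture exactly as open as the paper does.
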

One strategy to construct such posets could be to take a transitive digraph $D=(V,A)$, that is not the Cayley graph of a monoid. Now, construct an infinite poset $P=(V\times \mathbb{Z}, \leq)$ such that $(u,i)\leq (v,j)$ if there is a directed walk of length $j-i$ from $u$ to $v$ in $D$. While transitive digraphs, that are not Cayley graphs of groups are known, see e.g.~\cite[Theorem 6]{Bre-94} or~\cite[Proposition 8.5]{Mar-85}, we end up with the following problem of independent interest:
\begin{question}\label{quest:graph}
 Is every transitive digraph the Cayley graph of a monoid?
\end{question}

\section{Conclusions}\label{sec:conclude}
We have explored the notion of Cayley posets by giving a natural inclusion diagram of different classes ad providing characterizations in terms of endomorphism monoids. In Theorem~\ref{thm:characterizationmono} we give a characterization of monoid posets. However, we wonder if the following strengthening could be true: 
If $P$ is a poset, $L<\End(P)$ a subsemigroup, and $F\subseteq P$ a principal upset, such that for every element $x\in P$ there is a unique $\varphi_x\in L$ such that $\varphi_x(F)=\upa x$, then $P$ is a monoid poset. By Theorem~\ref{thm:characterizationsemi} we know that in this case $P$ is a semigroup poset, but it is not clear if it is a monoid poset.

We have shown strictness of all inclusions in Figure~\ref{fig:diagram} and in particular, we provide small posets that are not Cayley. One first set of questions concerns the types of posets studied in Section~\ref{sec:firstseparations}. There is a natural way to define subposets of $P(\Z,\langle 2,3  \rangle)$ by choosing all elements between up to two maxima and up to two minima. We suspect that for large enough posets the properties of these posets depend only on the number of minima and maxima.

On the other hand, we give a rich set of constructions in order to obtain large classes of Cayley posets.  
A particular example are weak orders. While we know that they are full semigroup posets an open question is, whether bi-infinite weak orders are monoid posets only if all levels are of the same size (Conjecture~\ref{conj:weak})?

The last part of the paper is concerned with auto-equivalent posets. We show that if locally finite, such posets are (group-embeddable) monoid posets. We know that group-embeddability is lost when dropping local finiteness, but are these posets still monoid posets? At the end of Section~\ref{sec:autoequivalent} we discuss several further relaxations of this concept. Finally, one can ask, whether uniform posets, i.e., those where any two principal upsets are isomorphic, are monoid posets (Conjecture~\ref{conj:uniform}). A particularly intriguing question that comes up when studying the above conjecture, is whether thee are transitive digraphs that are not Cayley graphs of a monoid (Question~\ref{quest:graph}).

\subsubsection*{Acknowledgments.}
We thank Ulrich Knauer for his comments on the manuscript. The first author was partially supported by the Spanish \emph{Ministerio de Econom\'ia,
Industria y Competitividad} through grant MTM2016-78881-P. The second author was partially supported by the French \emph{Agence nationale de la recherche} through project ANR-17-CE40-0015 and by the Spanish \emph{Ministerio de Econom\'ia,
Industria y Competitividad} through grant RYC-2017-22701.

\bibliography{lit}

\begin{thebibliography}{10}

\bibitem{zbMATH06740692}
{\sc J.~{Ara\'ujo}, W.~{Bentz}, and J.~{Konieczny}}, {\em {Directed graphs of
  inner translations of semigroups.}}, {Semigroup Forum}, 94 (2017),
  pp.~650--673.

\bibitem{Ass-16}
{\sc A.~{Assi} and P.~A. {Garc\'{\i}a-S\'anchez}}, {\em {Numerical semigroups
  and applications.}}, Cham: Springer, 2016.

\bibitem{BCMP}
{\sc E.~Briales, A.~Campillo, C.~Mariju\'{a}n, and P.~Pis\'{o}n}, {\em Minimal
  systems of generators for ideals of semigroups}, J. Pure Appl. Algebra, 124
  (1998), pp.~7--30.

\bibitem{Cha-15}
{\sc J.~{Chappelon}, I.~{Garc\'{\i}a-Marco}, L.~P. {Montejano}, and J.~L.
  {Ram\'{\i}rez Alfons\'{\i}n}}, {\em {M\"obius function of semigroup posets
  through Hilbert series.}}, {J. Comb. Theory, Ser. A}, 136 (2015),
  pp.~238--251.

\bibitem{Cli-61}
{\sc A.~H. {Clifford} and G.~B. {Preston}}, {\em {The algebraic theory of
  semigroups. Vol. I.}}, vol.~7, American Mathematical Society (AMS),
  Providence, RI, 1961.

\bibitem{CLS}
{\sc D.~A. {Cox}, J.~B. {Little}, and H.~K. {Schenck}}, {\em Toric varieties},
  vol.~124 of Graduate Studies in Mathematics, American Mathematical Society,
  Providence, RI, 2011.

\bibitem{Cza-15}
{\sc E.~{Czabarka}, A.~{Dutle}, T.~{Johnston}, and L.~A. {Sz\'ekely}}, {\em
  {Abelian groups yield many large families for the diamond problem.}}, {Eur.
  J. Math.}, 1 (2015), pp.~320--328.

\bibitem{Eis-96}
{\sc D.~{Eisenbud} and B.~{Sturmfels}}, {\em {Binomial ideals.}}, {Duke Math.
  J.}, 84 (1996), pp.~1--45.

\bibitem{Gar-18}
{\sc I.~{Garc\'{\i}a-Marco} and K.~{Knauer}}, {\em {Chomp on numerical
  semigroups.}}, {Algebr. Comb.}, 1 (2018), pp.~371--394.

\bibitem{God-01}
{\sc C.~{Godsil} and G.~{Royle}}, {\em {Algebraic graph theory.}}, vol.~207,
  New York, NY: Springer, 2001.

\bibitem{zbMATH06093204}
{\sc Y.~{Hao}, X.~{Gao}, and Y.~{Luo}}, {\em {On Cayley graphs of symmetric
  inverse semigroups.}}, {Ars Comb.}, 100 (2011), pp.~307--319.

\bibitem{zbMATH05973394}
{\sc Y.~{Hao}, X.~{Gao}, and Y.~{Luo}}, {\em {On the Cayley graphs of Brandt
  semigroups.}}, {Commun. Algebra}, 39 (2011), pp.~2874--2883.

\bibitem{zbMATH06139402}
{\sc Y.~{Hao} and Y.~{Luo}}, {\em {On the Cayley graphs of left (right)
  groups.}}, {Southeast Asian Bull. Math.}, 34 (2010), pp.~685--691.

\bibitem{How-76}
{\sc J.~M. {Howie}}, {\em {An introduction to semigroup theory.}}
\newblock {L. M. S. Monographs. Vol. 7. London - New York - San Francisco:
  Academic Press: a subsidiary of Harcourt Brace Jovanovich, Publishers. X, 272
  p. \textsterling 9.80 (1976).}, 1976.

\bibitem{zbMATH05811972}
{\sc G.~{Jian} and S.~{Fan}}, {\em {On undirected Cayley graphs of some
  completely simple semigroups.}}, {Southeast Asian Bull. Math.}, 33 (2009),
  pp.~741--749.

\bibitem{Kat-10}
{\sc A.~{Katsabekis}, M.~{Morales}, and A.~{Thoma}}, {\em {Binomial generation
  of the radical of a lattice ideal.}}, {J. Algebra}, 324 (2010),
  pp.~1334--1346.

\bibitem{Kel-02}
{\sc A.~V. Kelarev}, {\em On undirected {C}ayley graphs}, Australas. J.
  Combin., 25 (2002), pp.~73--78.

\bibitem{Kel-06}
{\sc A.~V. Kelarev}, {\em On {C}ayley graphs of inverse semigroups}, Semigroup
  Forum, 72 (2006), pp.~411--418.

\bibitem{Kel-03}
{\sc A.~V. Kelarev and C.~E. Praeger}, {\em On transitive {C}ayley graphs of
  groups and semigroups}, European J. Combin., 24 (2003), pp.~59--72.

\bibitem{zbMATH05610940}
{\sc B.~{Khosravi}}, {\em {On Cayley graphs of left groups.}}, {Houston J.
  Math.}, 35 (2009), pp.~745--755.

\bibitem{zbMATH06948232}
{\sc B.~{Khosravi}}, {\em {Some properties of Cayley graphs of cancellative
  semigroups.}}, {Proc. Rom. Acad., Ser. A, Math. Phys. Tech. Sci. Inf. Sci.},
  17 (2016), pp.~3--10.

\bibitem{zbMATH06864652}
{\sc B.~{Khosravi}}, {\em {On the Cayley graphs of completely simple
  semigroups.}}, {Bull. Malays. Math. Sci. Soc. (2)}, 41 (2018), pp.~741--749.

\bibitem{zbMATH06029728}
{\sc B.~{Khosravi} and B.~{Khosravi}}, {\em {A characterization of Cayley
  graphs of Brandt semigroups.}}, {Bull. Malays. Math. Sci. Soc. (2)}, 35
  (2012), pp.~399--410.

\bibitem{zbMATH06147894}
{\sc B.~{Khosravi} and B.~{Khosravi}}, {\em {On Cayley graphs of semilattices
  of semigroups.}}, {Semigroup Forum}, 86 (2013), pp.~114--132.

\bibitem{zbMATH05701935}
{\sc B.~{Khosravi} and M.~{Mahmoudi}}, {\em {On Cayley graphs of rectangular
  groups.}}, {Discrete Math.}, 310 (2010), pp.~804--811.

\bibitem{Kil-00}
{\sc M.~{Kilp}, U.~{Knauer}, and A.~V. {Mikhalev}}, {\em {Monoids, acts and
  categories. With applications to wreath products and graphs. A handbook for
  students and researchers.}}, Berlin: Walter de Gruyter, 2000.

\bibitem{Kna-10}
{\sc K.~Knauer and U.~Knauer}, {\em Toroidal embeddings of right groups}, Thai
  J. Math., 8 (2010), pp.~483--490.

\bibitem{Kna-16}
{\sc K.~{Knauer} and U.~{Knauer}}, {\em {On planar right groups.}}, {Semigroup
  Forum}, 92 (2016), pp.~142--157.

\bibitem{Kna-19}
{\sc U.~{Knauer} and K.~{Knauer}}, {\em {Algebraic graph theory. Morphisms,
  monoids and matrices (to appear). 2nd revised and extended edition.}},
  Berlin: De Gruyter, 2nd revised and extended edition~ed., 2019.

\bibitem{Lop-14}
{\sc H.~H. {L\'opez} and R.~H. {Villarreal}}, {\em {Computing the degree of a
  lattice ideal of dimension one.}}, {J. Symb. Comput.}, 65 (2014), pp.~15--28.

\bibitem{zbMATH05886836}
{\sc Y.~{Luo}, Y.~{Hao}, and G.~T. {Clarke}}, {\em {On the Cayley graphs of
  completely simple semigroups.}}, {Semigroup Forum}, 82 (2011), pp.~288--295.

\bibitem{Mar-85}
{\sc D.~{Maru\v{s}i\v{c}}}, {\em {Vertex transitive graphs and digraphs of
  order $p\sp k$.}}
\newblock {Cycles in graphs, Workshop Simon Fraser Univ., Burnaby/Can. 1982,
  Ann. Discrete Math. 27, 115-128 (1985).}, 1985.

\bibitem{Bre-94}
{\sc B.~D. {McKay} and C.~E. {Praeger}}, {\em {Vertex-transitive graphs which
  are not Cayley graphs. I.}}, {J. Aust. Math. Soc., Ser. A}, 56 (1994),
  pp.~53--63.

\bibitem{zbMATH06613956}
{\sc J.~{Meksawang} and S.~{Panma}}, {\em {Cayley digraphs of Brandt semigroups
  relative to Green's equivalence classes.}}, {Southeast Asian Bull. Math.}, 39
  (2015), pp.~815--827.

\bibitem{zbMATH06120589}
{\sc J.~{Meksawang}, S.~{Panma}, and U.~{Knauer}}, {\em {Characterization of
  finite simple semigroup digraphs.}}, {Algebra Discrete Math.}, 12 (2011),
  pp.~53--68.

\bibitem{Mil-05}
{\sc E.~{Miller} and B.~{Sturmfels}}, {\em {Combinatorial commutative
  algebra.}}, vol.~227, New York, NY: Springer, 2005.

\bibitem{Mor-05}
{\sc M.~{Morales} and A.~{Thoma}}, {\em {Complete intersection lattice
  ideals.}}, {J. Algebra}, 284 (2005), pp.~755--770.

\bibitem{Mun-66}
{\sc W.~D. {Munn}}, {\em {Uniform semilattices and bisimple inverse
  semigroups.}}, {Q. J. Math., Oxf. II. Ser.}, 17 (1966), pp.~151--159.

\bibitem{zbMATH05647985}
{\sc S.~{Panma}, U.~{Knauer}, and S.~{Arworn}}, {\em {On transitive Cayley
  graphs of strong semilattices of right (left) groups.}}, {Discrete Math.},
  309 (2009), pp.~5393--5403.

\bibitem{Ram-05}
{\sc J.~L. {Ram\'{\i}rez Alfons\'{\i}n}}, {\em {The Diophantine Frobenius
  problem.}}, vol.~30, Oxford: Oxford University Press, 2005.

\bibitem{RGSlibro}
{\sc J.~C. Rosales and P.~A. Garc\'{\i}a-S\'{a}nchez}, {\em Finitely generated
  commutative monoids}, Nova Science Publishers, Inc., Commack, NY, 1999.

\bibitem{Ros-09}
{\sc J.~C. {Rosales} and P.~A. {Garc\'{\i}a-S\'anchez}}, {\em {Numerical
  semigroups.}}, vol.~20, Dordrecht: Springer, 2009.

\bibitem{Sab-58}
{\sc G.~{Sabidussi}}, {\em {On a class of fixed-point-free graphs.}}, {Proc.
  Am. Math. Soc.}, 9 (1958), pp.~800--804.

\bibitem{Sol-06}
{\sc D.~V. Solomatin}, {\em Direct products of cyclic semigroups admitting a
  planar {C}ayley graph.}, Sibirskie Ehlektronnye Matematicheskie Izvestiya
  [Siberian Electronic Mathematical Reports], 3 (2006), pp.~238--252.

\bibitem{Sol-11}
{\sc D.~V. Solomatin}, {\em Semigroups with outerplanar {C}ayley graphs},
  Sibirskie {\`E}lektronnye Matematicheskie Izvestiya [Siberian Electronic
  Mathematical Reports], 8 (2011), pp.~191--212.

\bibitem{zbMATH06603377}
{\sc T.~{Suksumran} and S.~{Panma}}, {\em {On connected Cayley graphs of
  semigroups.}}, {Thai J. Math.}, 13 (2015), pp.~641--652.

\bibitem{Val-82}
{\sc J.~{Valdes}, R.~E. {Tarjan}, and E.~L. {Lawler}}, {\em {The recognition of
  series parallel digraphs.}}, {SIAM J. Comput.}, 11 (1982), pp.~298--313.

\bibitem{Villalibro}
{\sc R.~H. {Villarreal}}, {\em Monomial algebras}, Monographs and Research
  Notes in Mathematics, CRC Press, Boca Raton, FL, second~ed., 2015.

\bibitem{zbMATH06184562}
{\sc S.~{Wang} and Y.~{Li}}, {\em {On Cayley graphs of completely 0-simple
  semigroups.}}, {Cent. Eur. J. Math.}, 11 (2013), pp.~924--930.

\bibitem{zbMATH05812671}
{\sc W.~{Wang} and H.~{Hou}}, {\em {Cayley graphs of strong semilattices of
  left groups.}}, {J. Wuhan Univ., Nat. Sci. Ed.}, 55 (2009), pp.~633--636.

\bibitem{Whi-01}
{\sc A.~T. White}, {\em Graphs of groups on surfaces}, vol.~188 of
  North-Holland Mathematics Studies, North-Holland Publishing Co., Amsterdam,
  2001.
\newblock Interactions and models.

\bibitem{Zha-08}
{\sc X.~Zhang}, {\em Clifford semigroups with genus zero}, in Semigroups,
  {A}cts and {C}ategories with {A}pplications to {G}raphs, Proceedings, Tartu
  2007, vol.~3 of Mathematics Studies, Estonian Mathematical Society, Tartu,
  2008, pp.~151--160.

\end{thebibliography}
\bibliographystyle{my-siam}
\end{document}